\def\.{{\hspace{0.2mm}}}
\def\PP{{\bf P}}
\def\EE{{\bf E}}
\def\E{{\mathcal E}}
\def\F{{\mathcal F}}
\def\H{{\mathcal H}}
\def\1{{\bf 1}}
\def\wt{\widetilde}
\def\<{{\langle}}
\def\>{{\rangle}}
\newcommand{\Rd}{\mathbb{R}^d}
\newcommand{\R}{\mathbb{R}}
\newtheorem{thm}{Theorem}[section]
\newtheorem{lem}[thm]{Lemma}
\newtheorem{prop}[thm]{Proposition}
\newtheorem{cor}[thm]{Corollary}
\newtheorem{exam}[thm]{Example}
\newtheorem{rem}[thm]{Remark}
\begin{document}

\title{On a scattering length for additive functionals and spectrum of fractional Laplacian with a non-local perturbation}

\author{Daehong Kim\footnote{The first named author is partially supported by a Grant-in-Aid for
Scientific Research (C)  No.~17K05304 from Japan Society for the Promotion of Science.}~ and Masakuni Matsuura}
%\address{Faculty of Advanced Science and Technology, Kumamoto University, Japan, \tt{daehong@gpo.kumamoto-u.ac.jp, Research Institute for Interdisciplinary Science, Okayama University, Japan, {\tt kusuoka@okayama-u.ac.jp}}

\date{\empty}
%\vspace{5mm}\\
%\normalsize 2-39-1 Kurokami, Chuo-ku, Kumamoto 860-8555, Japan\\
%\normalsize e-mail address: daehong@gpo.kumamoto-u.ac.jp}
%\vspace{5mm}
%\normalsize $^\ddagger$ Research Institute for Interdisciplinary Science, Okayama University\\
%\normalsize 3-1-1 Tsushima-naka, Kita-ku Okayama 700-8530, Japan\\
%\normalsize e-mail: kusuoka@okayama-u.ac.jp

\maketitle
\begin{abstract}
In this paper we study the scattering length for positive additive functionals of symmetric stable processes on $\R^d$. The additive functionals considered here are not necessarily continuous. We prove that the semi-classical limit of the scattering length equals the capacity of the support of a certain measure potential, thus extend previous results for the case of positive continuous additive functionals. We also give an equivalent criterion for the fractional Laplacian with a measure valued non-local operator as a perturbation to have purely discrete spectrum in terms of the scattering length, by considering the connection between scattering length and the bottom of the spectrum of Schr\"odinger operator in our settings.
\end{abstract}
{\bf Keywords}~~Additive functionals, Dirichlet forms, Discrete spectrum, Non-local perturbation, Schr\"odinger operators, Scattering lengths, Stable processes.
\vskip 0.5cm
\noindent
{\bf Mathematics Subject Classification (2010)}~~Primary 60J45, secondary , 60F17, 60J57, 35J10, 60J55, 60J35 
%
% \vskip0.2cm
%
%{\bf Key words:} 

%%%%%%%%%%%%%%%%%%%%%%%%%%%%%%%%%%%%%%%%%%%%%%%%%%%%%%%%%%%%%%%%%%%%%%%%%%%%%%%%%%%%%%%%%%%%%%%%%%%%%
\section{Introduction}
%%%%%%%%%%%%%%%%%%%%%%%%%%%%%%%%%%%%%%%%%%%%%%%%%%%%%%%%%%%%%%%%%%%%%%%%%%%%%%%%%%%%%%%%%%%%%%%%%%%%%
In \cite{Kac, KacLut}, Kac and Luttinger studied a connection between the scattering length ${\sf \Gamma}(V)$ of a positive integrable potential $V$ and Brownian motion $B_t$ on $\R^3$. They gave a probabilistic expression of ${\sf \Gamma}(V)$ as
\begin{align*}
{\sf \Gamma}(V)=\lim_{t \to \infty}\frac{1}{t}\int_{\R^3}\left(1-\EE_x\left[e^{-\int_0^tV(B_s){\rm d}s}\right]\right){\rm d}x,
\end{align*}
where $\EE_x$ denotes the expectation of $B_t$ started at $x \in \R^3$. In addition, they proved that if $V=\1_K$ for a compact set $K \subset \R^3$ satisfying the so called Kac's regularity (the Lebesgue penetration time of $K$ by $B_t$ is the same as the hitting time of $K$), then $\lim_{p\to \infty}{\sf \Gamma}(p\1_K) = {\rm Cap} (K)$. Moreover, they conjectured that for any positive integrable function $V$ with compact support satisfying the regularity as above
\begin{align}\label{Kacconjecture}
\lim_{p\to \infty}{\sf \Gamma}(pV) = {\rm Cap} ({\rm supp}[V]).
\end{align}

Taylor \cite{Taylor:1976} developed the notion of scattering length further into a tool for studying the effectiveness of a potential as a perturbation of the Laplacian $-\Delta$ on $\R^d$. More precisely, for a positive integrable function $V$ on $\R^d$, let $U_V$ be the capacitary potential of $V$ defined by $U_V(x)=\lim_{\varepsilon \to 0}(\varepsilon + V-\Delta)^{-1}V(x)$. Taylor defined the scattering length ${\sf \Gamma}(V)$ as
\begin{align*}
{\sf \Gamma}(V)=-\int_{\R^d}\Delta U_V(x){\rm d}x
\end{align*}
and proved Kac and Luttinger's formula (\cite[Proposition 1.1]{Taylor:1976}), which makes it natural that the scattering length is analogous to the capacity. Indeed, for a compact set $K \subset \R^d$ with the Kac's regularity, the capacitary potential $U_K$ of $K$ is given by
$U_{K}(x)=1-{\EE}_x[e^{-\int_0^\infty V_K(B_t){\rm d}t}]$ for $V_K=\infty$ on $K$ and $0$ off $K$. Then $-\Delta U_K$ is equal to $\gamma_K$ the equilibrium measure on $K$. Hence ${\sf \Gamma}(V_K)=-\int_{\R^d}\Delta U_K(x){\rm d}x =\int_{\R^d}\gamma_K ({\rm d}x)={\rm Cap} (K)$, where ${\rm Cap}$ denotes the Wiener capacity. Therefore, the phenomenon of \eqref{Kacconjecture} is expected naturally. 

The Kac-Luttinger's conjecture \eqref{Kacconjecture} was confirmed by Taylor \cite{Taylor:1976}. For more general symmetric Markov processes, Takahashi \cite{Takahashi} gave a new probabilistic representation of the scattering length of a continuous potential which makes the limit \eqref{Kacconjecture} quite transparent. For symmetric Markov processes again, Takeda \cite{Takeda:2010} considered the behaviour of the scattering length of a positive smooth measure potential by using the random time change argument for Dirichlet forms and gave a simple elegant proof of the analog of \eqref{Kacconjecture} without Kac's regularity. The result in \cite{Takeda:2010} was extended to a non-symmetric case by He \cite{He}. For general right Markov processes, Fitzsimmons, He and Ying \cite{FitzPY} extended Takahashi's result by using the tool of Kutznetsov measure and proved the analog of \eqref{Kacconjecture} for a positive continuous additive functional. 

Some interesting applications for spectral properties of $-\Delta +V$ have been studied with scattering lengths. Taylor \cite{Taylor:1976} gave a two-sided bound for the bottom of the spectrum of $-\Delta+V$ on a bounded region with the Neumann boundary condition via ${\sf \Gamma}(V)$. This result was extended by Siudeja \cite{Siudeja:Illinois} in the context of isotropic stable processes. Furthermore, Taylor \cite{Taylor:2006} gave the following equivalent criterion for discreteness of the spectrum of $-\Delta+V$ in terms of ${\sf \Gamma}(V)$: for given $c>0$, there exists $r_0=r_0(c) \in (0,1]$ and $R : (0,r_0] \to (0,\infty)$ such that
\begin{align}\label{equicriterion}
{\sf \Gamma} (r^2V_{r,\xi}) \ge r^2c, \quad {\rm for}~~r \in (0,r_0], ~|\xi| \ge R(r),
\end{align}  
where $V_{r,\xi}$ is the function supported on the unit cube $D_{1,0}$ in $\R^d$ defined by $V_{r,\xi}(x)=V(rx +\xi)$. In the case $V=\infty$ on $\R^d \setminus \Omega$, the condition \eqref{equicriterion} becomes ${\rm Cap} (D_{r,\xi}\setminus \Omega) \ge r^2c~ {\rm Cap} (D_{r,\xi})$ for $r \in (0,r_0]$, $|\xi| \ge R(r)$, where $D_{r,\xi}$ is the cube in $\R^d$ with side length $r$ and center $\xi$. This is known as one of the equivalent criteria for discreteness of the spectrum of $-\Delta$ on $L^2(\Omega)$ with the Dirichlet boundary condition on $\partial \Omega$ (\cite{MazyaShubin}). 
\vskip 0.2cm
Scattering lengths cited so far were considered for positive continuous additive functionals. In the present paper, we first define the scattering length of a positive additive functional of the form 
\begin{align}\label{AF}
A_t^\mu + \sum_{0<s\le t}F(X_{s-}, X_s)
\end{align}
which is not necessarily continuous, in the context of symmetric transient stable process ${\bf X}=(X_t, \PP_x)$ of index $\alpha~ (0 <\alpha < 2)$ in $\R^d$. Here $A_t^\mu$ is the positive continuous additive functional of ${\bf X}$ with a positive smooth measure $\mu$ on $\R^d$ as its Revuz measure and $F$ is a symmetric positive bounded Borel function on $\R^d\times \R^d$ vanishing on the diagonal. Let ${\bf F}^{(p)}$ be a non-local linear operator defined by 
\begin{align}\label{dfnonlocaloper}
{\bf F}^{(p)}f(x)=C_{d,\alpha}\int_{\R^d}\frac{\left(1-e^{-pF(x,y)}\right)f(y)}{|x-y|^{d+\alpha}}{\rm d}y, \quad p \ge 1 
\end{align}
for any measureable function $f$ on $\R^d$, where $C_{d,\alpha}:=\alpha 2^{\alpha -1}\pi^{-d/2}\Gamma(\frac{d+\alpha}{2})\Gamma(1-\frac{\alpha}{2})^{-1}$. Put ${\bf F}f:={\bf F}^{(1)}f$. We assume that ${\bf F}^{(p)}1 \in L^1(\R^d)$ for any $p \ge 1$.  
Let $U_{\mu+F}$ be the capacitary potential relative to the additive functional \eqref{AF} defined by
\begin{equation}
U_{\mu+F}(x):=1-\EE_x\left[e^{-A^{\mu}_{\infty} - \sum_{t>0}F(X_{t-},X_t)}\right].
\label{capacitary}
\end{equation} 
In this paper, we shall define the \emph{scattering length} ${\sf \Gamma}(\mu+F)$ relative to \eqref{AF} by 
\begin{equation*}
{\sf \Gamma}(\mu + F):= \int_{\R^d}(1-U_{\mu+F})(x)\mu({\rm d}x)+\int_{\R^d}{\bf F}(1-U_{\mu+F})(x){\rm d}x.
\end{equation*}
We will explain in Section \ref{sec:propertiesofscattering} why the expression as above is natural for the definition of the scattering length relative to \eqref{AF}. We will also give another expression for the scattering length above, which plays a crucial role throughout this paper (see Lemma \ref{anotherexpression}(1)). 

Our first result of this paper is about the semi-classical limit of the scattering length. We will investigate the behaviour of the scattering length ${\sf \Gamma}(p\mu+pF)$ when $p \to \infty$. More precisely, let $\tau_t$ be the right continuous inverse of the positive continuous additive functional $A_t^{\mu+{\bf F}1}:=A_t^{\mu}+\int_0^t{\bf F}1(X_s){\rm d}s$ defined by $\tau_t:=\inf \{s>0 \mid A_s^{\mu+{\bf F}1} >t\}$. Denote by ${\sf S}_{\mu+{\bf F}1}$ the fine support of $A_t^{\mu+{\bf F}1}$, the topological support of $\mu+{\bf F}1$ relative to the fine topology of ${\bf X}$, 
\begin{align*}
{\sf S}_{\mu+{\bf F}1}=\left\{x \in \R^d \mid \PP_x(\tau_0 =0)=1\right\}.
\end{align*}

\begin{thm}\label{behaviourofscattering}
Assume that there exists a positive function $\psi(p)$ satisfying $\psi(p) \le p$ and $\psi(p) \to \infty$ as $p \to \infty$, the non-local operator ${\bf F}^{(p)}$ induced by $F$ satisfies the following condition: for large $p\ge 1$ and a constant $C>0$
\begin{align}\label{lowerpsi}
{\bf F}^{(p)}1(x) \ge C\psi(p){\bf F}1(x) \qquad \text{for} \hspace{0.2cm} x \in \R^d.
\end{align}
Then $\lim_{p\to \infty}{\sf \Gamma}(p\mu+pF) = {\rm Cap} ({\sf S}_{\mu+{\bf F}1})$. Here ${\rm Cap}$ is the capacity relative to the Dirichlet form $(\E, \F)$ of ${\bf X}$. 
\end{thm}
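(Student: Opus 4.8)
The plan is to follow the random time-change strategy of Takeda \cite{Takeda:2010}, adapted to the discontinuous additive functional \eqref{AF}. The key is the alternative expression for the scattering length promised in Lemma \ref{anotherexpression}(1); I expect it to read (after integrating the capacitary potential against the Revuz measure of $A^{\mu+{\bf F}1}$) as
\begin{equation*}
{\sf \Gamma}(p\mu+pF)=\int_{\R^d}\EE_x\!\left[e^{-pA^\mu_\infty-\sum_{t>0}pF(X_{t-},X_t)}\right](\mu+{\bf F}1)({\rm d}x)\cdot(\text{const}) ,
\end{equation*}
or more usefully in a form where ${\sf \Gamma}(p\mu+pF)$ is the total mass of the $0$-order equilibrium measure, relative to $(\E,\F)$, of the capacitary potential $U_{p\mu+pF}$. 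Granting such an expression, the first step is to show the monotone limit $U_{p\mu+pF}\uparrow U_\infty$ as $p\to\infty$, where $U_\infty(x)=\PP_x(\sigma_{{\sf S}}<\infty)$ is the hitting probability of the fine support ${\sf S}={\sf S}_{\mu+{\bf F}1}$; this uses that both $pA^\mu_\infty$ and $\sum_{t>0}pF(X_{t-},X_t)$ blow up, along the trajectory, exactly on the event that ${\bf X}$ charges ${\sf S}$, together with the hypothesis ${\bf F}^{(p)}1\in L^1$ and the domination \eqref{lowerpsi} controlling the jump part. The role of \eqref{lowerpsi} is to guarantee that the jump functional $\sum_{t>0}pF$ contributes its full "charge'' — i.e. that $e^{-\sum_{t>0}pF(X_{t-},X_t)}\to\1_{\{\text{no jump from/into }{\sf S}\}}$ — at the same rate as the continuous part governed by $\psi(p)$.

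The second step is to identify $U_\infty$ with the $1$-equilibrium potential (or $0$-order, using transience of ${\bf X}$) of the set ${\sf S}$ and to pass the limit through the functional ${\sf \Gamma}$. Concretely, using the Revuz correspondence and the fact that $-({\rm the\ generator})U_{p\mu+pF}$ equals the equilibrium-type measure $\nu_p$ with $\nu_p(\R^d)={\sf \Gamma}(p\mu+pF)$, I would show $\nu_p$ converges vaguely to the equilibrium measure $\gamma_{{\sf S}}$ of ${\sf S}$ and that the total masses converge, $\nu_p(\R^d)\to\gamma_{{\sf S}}(\R^d)={\rm Cap}({\sf S})$. Monotone convergence $U_{p\mu+pF}\uparrow U_\infty$ gives the lower bound $\liminf_p{\sf \Gamma}(p\mu+pF)\ge{\rm Cap}({\sf S})$ directly (lower semicontinuity of the Dirichlet energy, since ${\sf \Gamma}$ is essentially $\E(U,U)$ by the variational characterization of capacity and the identity $U_{p\mu+pF}\in\F$ with $\E(U_{p\mu+pF},v)=\langle\nu_p,v\rangle$). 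For the reverse inequality one notes $U_{p\mu+pF}\le U_\infty$ and that $U_\infty$ is itself admissible in the capacity variational problem for ${\sf S}$ (it is $\ge 1$ q.e. on ${\sf S}$ — indeed $=1$ q.e. there by the definition of the fine support as the set where $\PP_x(\tau_0=0)=1$), hence ${\sf \Gamma}(p\mu+pF)\le\E(U_\infty,U_\infty)={\rm Cap}({\sf S})$.

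The main obstacle, I expect, is the passage to the limit in the non-local (jump) part $\int_{\R^d}{\bf F}(1-U_{p\mu+pF})(x)\,{\rm d}x$: unlike the continuous part this term mixes the kernel $|x-y|^{-d-\alpha}$, the factor $1-e^{-pF(x,y)}$, and the potential $1-U_{p\mu+pF}(y)$, and one must show it converges to the jump contribution to ${\rm Cap}({\sf S})$ rather than being lost or over-counted. This is precisely where hypothesis \eqref{lowerpsi} is used: it forces ${\bf F}^{(p)}1\gtrsim\psi(p){\bf F}1$ so that the time-changed process sees ${\sf S}_{\mu+{\bf F}1}$ and not a strictly smaller set, and it lets one dominate $1-e^{-pF}$ from below uniformly on the support of ${\bf F}1$. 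Technically I would handle it by writing ${\bf F}(1-U_{p\mu+pF})={\bf F}1-{\bf F}U_{p\mu+pF}$, using dominated convergence (justified by ${\bf F}1\in L^1$) on the first piece combined with monotone convergence on the second, and then reconciling the resulting limit with the jump term of the Beurling–Deny decomposition of $\E$ restricted to ${\sf S}$. The continuous part $\int(1-U_{p\mu+pF})\,{\rm d}\mu$ is comparatively routine by monotone convergence and the Revuz formula.
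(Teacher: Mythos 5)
Your proposal takes a genuinely different route from the paper's, and it contains a concrete error in the upper-bound step. The paper does \emph{not} analyze the monotone limit of the capacitary potentials $U_{p\mu+pF}$ directly; indeed the authors explicitly remark that the time-change method ``cannot be applied directly because our scattering length contains a discontinuous additive functional.'' Instead, they reduce everything to Takeda's result $\lim_{p\to\infty}{\sf \Gamma}(p\nu)={\rm Cap}({\sf S}_\nu)$ for \emph{continuous} additive functionals by sandwiching. The key mechanism is the pair of inequalities ${\bf F}^{(q)}1\le q{\bf F}1$ (always true) and ${\bf F}^{(p)}1\ge C\psi(p){\bf F}1$ (hypothesis \eqref{lowerpsi}), used at the level of the integral expression \eqref{express} together with monotonicity of ${\sf \Gamma}$ (Lemma \ref{anotherexpression}(2)). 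In Lemma \ref{largescatter} one squeezes ${\sf \Gamma}(p^kF+p\mu+p{\bf F}1)$ between ${\sf \Gamma}(p\mu+p{\bf F}1)$ and $(1+p^{k-1}){\sf \Gamma}(p\mu+p{\bf F}1)$, giving the $\limsup$ bound; in the proof of the theorem one uses \eqref{lowerpsi} to replace the density ${\bf F}^{(p)}1$ in the integral by $C\psi(p){\bf F}1$ so that the lower bound becomes $\frac{n}{n+1}$ times a scattering length of the form ${\sf \Gamma}(pF+\frac{\psi(p)}{n}\mu+\frac{C\psi(p)}{n}{\bf F}1)$, squeezed again to ${\rm Cap}({\sf S}_{\mu+{\bf F}1})$. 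Your proposal does not identify this mechanism and instead describes the role of \eqref{lowerpsi} only in vague terms (``contributes its full charge at the same rate'').

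The concrete gap is in your reverse inequality. You write ``one notes $U_{p\mu+pF}\le U_\infty$ \dots hence ${\sf \Gamma}(p\mu+pF)\le\E(U_\infty,U_\infty)={\rm Cap}({\sf S})$.'' But ${\sf \Gamma}$ is \emph{not} $\E(U,U)$, and the inequality goes the wrong way. From \eqref{DEforUF} one gets the weak identity $\E(U,\varphi)=\int\varphi(1-U)\,{\rm d}\mu+\int\varphi\,{\bf F}(1-U)\,{\rm d}x$; taking $\varphi=U$ gives $\E(U,U)=\int U(1-U)\,{\rm d}\mu+\int U\,{\bf F}(1-U)\,{\rm d}x$, which is \emph{smaller} than ${\sf \Gamma}(\mu+F)=\int(1-U)\,{\rm d}\mu+\int{\bf F}(1-U)\,{\rm d}x$ because $U\le 1$. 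So the correct inequality is $\E(U_p,U_p)\le{\sf \Gamma}(p\mu+pF)$, which is useless for an upper bound on ${\sf \Gamma}$; moreover $U_p\le U_\infty$ does not imply $\E(U_p,U_p)\le\E(U_\infty,U_\infty)$ for a Dirichlet form. Your lower bound, if rephrased as $\liminf_p{\sf \Gamma}(p)\ge\liminf_p\E(U_p,U_p)\ge\E(U_\infty,U_\infty)$ via weak lower semicontinuity in the extended Dirichlet space, is at least in the right direction, but it requires a careful identification of the pointwise monotone limit $U_\infty$ with the equilibrium potential of the fine support ${\sf S}_{\mu+{\bf F}1}$ (in particular that $\sum_{t>0}F(X_{t-},X_t)>0$ a.s.\ coincides, up to an exceptional set, with the event of hitting the fine support of ${\bf F}1$); this step is nontrivial and you have not justified it. The paper's route via Takeda's theorem bypasses all of these delicate identifications.
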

Theorem \ref{behaviourofscattering} can be regarded as a generalization of the result in \cite{Takeda:2010} (in the framework of symmetric stable processes). In Section \ref{sec:propertiesofscattering}, we will prove Theorem \ref{behaviourofscattering} with the help of Lemmas \ref{anotherexpression} and \ref{largescatter} and confirm the condition \eqref{lowerpsi} with some concrete examples of $F$s.

Sections \ref{sec:EigenEst} and \ref{sec:DiscreteSpec} are devoted to the spectral theory of the Neumann fractional Laplacian with a positive potential $V$ and a measure valued non-local operator $d{\bf F}$ defined by $d{\bf F}f(x):={\bf F}f(x){\rm d}x$ as perturbations, including an equivalent criterion for discreteness of the spectrum of the Schr${\rm \ddot{o}}$dinger operator 
\begin{align*}
{\mathcal L}_{V+F}=(-\Delta)^{\alpha /2} +V+d{\bf F}.
\end{align*}
In Section \ref{sec:EigenEst}, we give a two-side bound for the bottom of the spectrum of ${\mathcal L}_{V+F}$ with the Neumann boundary condition on the unit cube $D_{1,0}$ in $\R^d$ via the scattering length ${\sf \Gamma}(V+F)$. The proofs are analogous to corresponding results in \cite{Siudeja:Illinois} with some additional modification due to $F$. 
It is well known that an operator ${\mathcal H}$ has discrete spectrum if its spectrum set $\sigma ({\mathcal H})$ consists of eigenvalues of finite multiplicity (with the only accumulated point $\infty$). We will abbreviate this with the notation $\sigma ({\mathcal H})=\sigma_{\rm d} ({\mathcal H})$. 
We also give an equivalent criterion for $\sigma({\mathcal L}_{V+F})=\sigma_{\rm d}({\mathcal L}_{V+F})$ in terms of the scattering length for $V$ and $F$, by using the results obtained in the previous section. 

Let $F_{r,\xi}$ be the function supported on $D_{1,0}\times D_{1,0}$ defined by $F_{r,\xi}(x,y):=F(rx+\xi, ry+\xi)$. The second result of this paper is the following.

\begin{thm}\label{maindiscrete}
The following conditions are equivalent.
\begin{enumerate}
\item \label{d1} For given $c>0$, there exists $r:=r(c) \in (0,1]$ and $R:=R(c) >0$ such that
\begin{align*}
{\sf \Gamma}(r^{\alpha}V_{r,\xi}+F_{r,\xi}) \ge r^\alpha c \qquad \text{for} \hspace{0.2cm}|\xi| \ge R.
\end{align*}
\item \label{d2} For given $c>0$, there exists $r_0:=r_0(c) \in (0,1]$ and $R : (0,r_0] \to (0,\infty)$ such that
\begin{align*}
{\sf \Gamma}(r^{\alpha}V_{r,\xi}+F_{r,\xi}) \ge r^{\alpha}c \qquad \text{for}\hspace{0.2cm}|\xi| \ge R(r),~ r \in (0,r_0].
\end{align*}
\item \label{d3} $\sigma({\mathcal L}_{V+F})=\sigma_{\rm d}({\mathcal L}_{V+F})$.
\end{enumerate}
\end{thm}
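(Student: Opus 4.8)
The plan is to establish the cycle $\eqref{d1} \Rightarrow \eqref{d2} \Rightarrow \eqref{d3} \Rightarrow \eqref{d1}$, modeled on the classical arguments of Taylor \cite{Taylor:2006} and Maz'ya--Shubin \cite{MazyaShubin} but carried out in the framework of the Dirichlet form of the stable process, using the scattering-length identities of Section \ref{sec:propertiesofscattering} and the two-sided eigenvalue bounds of Section \ref{sec:EigenEst}. The implication $\eqref{d2} \Rightarrow \eqref{d1}$ is trivial in one direction only, so in fact the substantive content is: first, $\eqref{d1} \Rightarrow \eqref{d2}$, which requires a scaling/monotonicity argument showing that if the scattering length is bounded below at one fixed scale $r(c)$ for large $|\xi|$, then after adjusting $c$ one gets the bound at \emph{all} sufficiently small scales; here I would use the scaling behaviour of $(-\Delta)^{\alpha/2}$, $V$, and $d{\bf F}$ under $x \mapsto rx+\xi$ together with Lemma \ref{anotherexpression}(1) and a covering of a large cube by unit cubes, the point being that the scattering length is superadditive enough over disjoint cubes to propagate the estimate downward in scale.

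For $\eqref{d2} \Rightarrow \eqref{d3}$, I would use the standard criterion that a nonnegative quadratic form has discrete spectrum iff for every $M>0$ the set of its normalized states with form-value $\le M$ is "concentrated near the origin" in the sense of relative compactness in $L^2$; equivalently, the bottom of the Neumann spectrum on cubes $D_{r,\xi}$ of the perturbed operator must tend to $+\infty$ as the cube recedes to infinity. This is precisely where the eigenvalue estimates of Section \ref{sec:EigenEst} enter: the two-sided bound there expresses the bottom $\lambda_1(D_{r,\xi})$ of the Neumann spectrum of ${\mathcal L}_{V+F}$ on $D_{r,\xi}$ in terms of ${\sf \Gamma}(r^\alpha V_{r,\xi}+F_{r,\xi})$, so condition \eqref{d2} (after a rescaling $x\mapsto rx+\xi$ reducing $D_{r,\xi}$ to the unit cube $D_{1,0}$) forces $\lambda_1(D_{r,\xi})\gtrsim r^{\alpha-2}c \to \infty$ along the relevant family of cubes. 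Feeding this into a Poincaré-type decomposition of $\R^d$ into cubes and invoking the IMS-type localization for the nonlocal form (the extra care being the nonlocal jump term $d{\bf F}$, which couples distinct cubes, but is controlled by the assumption ${\bf F}^{(p)}1\in L^1$ and the boundedness of $F$), one obtains that the form domain of ${\mathcal L}_{V+F}$ embeds compactly into $L^2(\R^d)$, i.e. $\sigma({\mathcal L}_{V+F})=\sigma_{\rm d}({\mathcal L}_{V+F})$.

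For the reverse implication $\eqref{d3} \Rightarrow \eqref{d1}$, I would argue by contraposition: if \eqref{d1} fails, then there is some $c_0>0$ such that for every $r\in(0,1]$ and every $R>0$ there is a center $\xi$ with $|\xi|\ge R$ and ${\sf \Gamma}(r^\alpha V_{r,\xi}+F_{r,\xi}) < r^\alpha c_0$; choosing a sequence $\xi_n\to\infty$ at a fixed scale $r$, the lower eigenvalue bound from Section \ref{sec:EigenEst} (the other side of the two-sided estimate) gives $\lambda_1(D_{r,\xi_n}) \le C r^{\alpha-2} c_0$, uniformly in $n$. Since the cubes $D_{r,\xi_n}$ can be taken pairwise disjoint (pass to a subsequence), the corresponding Neumann ground states, extended by zero, form an orthogonal sequence of approximate eigenfunctions with form-values bounded by a fixed constant, which via the min-max principle produces infinitely many spectral values of ${\mathcal L}_{V+F}$ below a fixed level — contradicting discreteness.

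The main obstacle I anticipate is the nonlocal coupling term $d{\bf F}$ in the localization step of $\eqref{d2}\Rightarrow\eqref{d3}$: unlike the local Neumann Laplacian, the form $\int{\bf F}f\,f\,dx$ does not decompose cleanly over a partition into cubes, so the IMS/partition-of-unity argument produces cross terms between far-apart cubes. Controlling these requires quantitative decay of the kernel $|x-y|^{-d-\alpha}$ together with the hypothesis ${\bf F}^{(p)}1\in L^1(\R^d)$ and $F$ bounded vanishing on the diagonal, and it is here that one must be careful that the error incurred by localization is genuinely small relative to the diverging lower bound $r^{\alpha-2}c$ on the local part; matching these two scales is the delicate quantitative heart of the argument, and it is exactly the reason Section \ref{sec:EigenEst}'s estimates were set up with the factor $r^\alpha$ in front of $V_{r,\xi}$ (and not in front of $F_{r,\xi}$), reflecting the different scaling weights of the potential and the jump perturbation.
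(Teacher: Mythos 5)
Your route is genuinely different from the paper's, and it has a gap. You propose the cycle $\eqref{d1}\Rightarrow\eqref{d2}\Rightarrow\eqref{d3}\Rightarrow\eqref{d1}$. But $\eqref{d2}\Rightarrow\eqref{d1}$ is the trivial direction (take $r=r_0$ and $R=R(r_0)$), so the paper closes the loop the other way, $\eqref{d1}\Rightarrow\eqref{d3}\Rightarrow\eqref{d2}\Rightarrow\eqref{d1}$, and never proves $\eqref{d1}\Rightarrow\eqref{d2}$ head-on. I do not believe your direct superadditivity argument for $\eqref{d1}\Rightarrow\eqref{d2}$ works: covering $D_{r,\xi}$ by smaller cubes and using a superadditivity of ${\sf\Gamma}$ would give that the large-scale scattering length dominates the sum of small-scale contributions, which bounds the small-scale quantities from \emph{above}, not from below, and certainly not uniformly over all far-away centers.

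The serious issue is in your $\eqref{d3}\Rightarrow\eqref{d1}$ contraposition, where you extend Neumann ground states on disjoint far-away cubes by zero and feed them to the min-max principle for ${\mathcal L}_{V+F}$. For the jump form this does not yield trial functions with bounded form values. If $\varphi$ is supported on $D_{r,\xi}$ then
\begin{equation*}
\E(\varphi,\varphi)=\E^{\rm ref}_{D_{r,\xi}}(\varphi,\varphi)+C_{d,\alpha}\int_{D_{r,\xi}}\varphi(x)^2\left(\int_{D_{r,\xi}^c}\frac{{\rm d}y}{|x-y|^{d+\alpha}}\right){\rm d}x,
\end{equation*}
and the second (``killing'') term is not controlled by $\E^{\rm ref}_{D_{r,\xi}}(\varphi,\varphi)$: the inner integral behaves like ${\rm dist}(x,\partial D_{r,\xi})^{-\alpha}$, so for a Neumann eigenfunction that does not vanish on $\partial D_{r,\xi}$ the killing integral is already infinite when $\alpha\ge1$, and in any case has nothing to do with the Neumann eigenvalue. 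Thus the extended functions need not be approximate eigenfunctions at a fixed level, and the min-max contradiction does not materialize. This is precisely why the paper first proves Lemma~\ref{precpt}, showing that discreteness forces the \emph{Dirichlet} eigenvalues $\lambda_1^{D,r,\xi}(V+F)$ to blow up (here $C_0^\infty(D_{r,\xi})$ test functions \emph{are} in the global form domain with the same form value, so no boundary term appears), and then invests the real technical work in Lemma~\ref{asympN}, a heat-kernel comparison argument transferring the blowup from Dirichlet to Neumann eigenvalues; Proposition~\ref{upper} then converts the Neumann eigenvalue lower bound into the scattering-length lower bound by contraposition. Your proposal is missing this two-step Dirichlet-to-Neumann bridge, and without it the $\eqref{d3}\Rightarrow\eqref{d1}$ (or $\eqref{d3}\Rightarrow\eqref{d2}$) direction does not close.

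A small slip: you wrote the scaling exponent $r^{\alpha-2}$, which is the one for $-\Delta$; here $\lambda_1^{N,r,\xi}(V+F)=r^{-\alpha}\lambda_1^{N,1,0}(r^\alpha V_{r,\xi}+F_{r,\xi})$, so after applying the bounds of Section~\ref{sec:EigenEst} the powers of $r$ cancel against the $r^\alpha c$ in the hypothesis, leaving a bound on $\lambda_1^{N,r,\xi}(V+F)$ that is a constant multiple of $c$ with no residual power of $r$.
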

In Section \ref{sec:DiscreteSpec}, we give the proof of Theorem \ref{maindiscrete}. The sufficient condition for discreteness of the spectrum in terms of the scattering length (\eqref{d1} $\Longrightarrow$ \eqref{d3}) is proved by Proposition \ref{suff}. The necessary condition (\eqref{d2} $\Longrightarrow$ \eqref{d1}) follows from Proposition \ref{upper} in combination with Lemmas \ref{precpt} and \ref{asympN}. Finally we also discuss an easier-to-handle sufficient condition for $\sigma({\mathcal L}_{V+F})=\sigma_{\rm d}({\mathcal L}_{V+F})$ based on the concept of thin at infinity studied in \cite{LSW, TTT} in our settings. 

Throughout this paper, we use $c, C, c', C', c_i, C_i~ (i=1,2,\ldots)$ as positive constants which may be different at different occurrences. For notational convenience, we let $a \wedge b:=\min\{a, b\}$ for any $a, b \in \R$.

%%%%%%%%%%%%%%%%%%%%%%%%%%%%%%%%%%%%%%%%%%%%%%%%%%%%%%%%%%%%%%%%%%%%%%%%%%%%%%%%%%%%%%%%%%%%%%%%%
\section{Scattering lengths}\label{sec:propertiesofscattering}  
%%%%%%%%%%%%%%%%%%%%%%%%%%%%%%%%%%%%%%%%%%%%%%%%%%%%%%%%%%%%%%%%%%%%%%%%%%%%%%%%%%%%%%%%%%%%%%%%%
Let ${\bf X}=(X_t, \PP_x)$ be the symmetric $\alpha$-stable process in $\R^d$ with $0 <\alpha < 2$ and $d\ge 1$, that is ${\bf X}$ is a irreducible and conservative L\'evy process whose characteristic function is given by $\exp (-t|\xi|^{\alpha})~ (\xi \in \R^d)$. For simplicity, we assume that $d>\alpha$, the transience of ${\bf X}$. Note that ${\bf X}$ admits a strictly positive joint continuous transition density function $p_t(x,y)$ on $(0,\infty) \times \R^d \times \R^d$ satisfying the following two-sided bound: for some $c >0$ which depends only on $d$ and $\alpha$,
\begin{align}\label{ptbound}
c^{-1}\left(t^{-d/\alpha} \wedge \frac{t}{|x-y|^{d+\alpha}}\right) \le p_t(x,y)\le c\left(t^{-d/\alpha} \wedge \frac{t}{|x-y|^{d+\alpha}}\right), \quad x,y \in \R^d, ~t>0 
\end{align}
(cf. \cite{ChenKumagai}). Let $(-\Delta)^{\alpha /2}$ be the fractional Laplacian on $\R^d$, the generator of ${\bf X}$. The Dirichlet form $(\E, \F)$ on $L^2(\R^d)$ associated with ${\bf X}$ (or $(-\Delta)^{\alpha /2}$) is given by
\begin{align*}
&\E(f,g)=\frac{C_{d,\alpha}}{2}\int_{\R^d}\int_{\R^d}\frac{(f(x)-f(y))(g(x)-g(y))}{|x-y|^{d+\alpha}}{\rm d}x{\rm d}y \\
&\F=\left\{f \in L^2(\R^d) : \int_{\R^d}\int_{\R^d}\frac{(f(x)-f(y))^2}{|x-y|^{d+\alpha}}{\rm d}x{\rm d}y <\infty\right\},
\end{align*} 
where $C_{d,\alpha}:=\alpha 2^{\alpha -1}\pi^{-d/2}\Gamma(\frac{d+\alpha}{2})\Gamma(1-\frac{\alpha}{2})^{-1}$.  
It is known that ${\bf X}$ has a L\'evy system $(N(x,{\rm d}y), {\rm d}t)$ where $N(x,{\rm d}y)=C_{d,\alpha}|x-y|^{-(d+\alpha)}{\rm d}y$, that is, 
\begin{align*}
\EE_x\left[\sum_{s\le t} \phi  (X_{s-}, X_s)\right] =\EE_x\left[\int^t_0\int_{\R^d}\frac{C_{d,\alpha}\phi(X_s,y)}{|X_s -y|^{d+\alpha}}{\rm d}y{\rm d}s\right]
\end{align*}
for any non-negative Borel function $\phi$ on $\Rd \times \Rd$ vanishing on the diagonal and any $x \in \Rd$ (cf. \cite{CFbook}). 

Let $\mu$ be a positive smooth measure on $\R^d$ and denote by $A_t^\mu$ a positive continuous additive functional (PCAF in abbreviation) of ${\bf X}$ in the Revuz correspondence to $\mu$: for any bounded measurable function $f$ on $\R^d$ ($\mathfrak{B}_b(\R^d)$ in notation),
\begin{align*}
\int_{\R^d}f(x)\mu({\rm d}x)=\uparrow \lim_{t\to 0}\frac{1}{t}\int_{\R^d}\EE_x\left[\int_0^tf(X_s){\rm d}A_s^\mu\right]{\rm d}x.
\end{align*} 
It is known that the family of positive smooth measures and the family of equivalence classes of the set of PCAFs are in one to one correspondence under Revuz correspondence (\cite[Theorem 5.1.4]{FOT}. Let $F(x,y)$ be a symmetric positive bounded Borel function on $\R^d\times \R^d$ vanishing on the diagonal. Then $\sum_{0<s\le t}F(X_{s-},X_s)$ is an (discontinuous) additive functional of ${\bf X}$. It is natural to consider a combination of the additive functionals of the form \eqref{AF} because the pure jump process ${\bf X}$ admits many discontinuous additive functionals.

For $\beta >0$, we define the $\beta$-order resolvent kernel $r_{\beta}(x,y)=\int_0^\infty e^{-\beta t}p_t(x,y){\rm d}t$, $x,y \in \R^d$. 
Since ${\bf X}$ is transient and $\beta \mapsto r_{\beta}(x,y)$ is decreasing, one can define the $0$-order resolvent kernel $r(x,y):=\lim_{\beta \to 0}r_{\beta}(x,y)< \infty$ for $x,y \in \R^d$ with $x\neq y$. It is known that $r(x,y)$ is nothing but the Riesz kernel, $r(x,y)=C_{d,\alpha}|x-y|^{\alpha -d}$. For a non-negative Borel measure $\nu$, we write 
\begin{align*}
R\nu(x):=
C_{d,\alpha}\int_{\R^d}|x-y|^{\alpha -d}\nu({\rm d}y)=\EE_x\left[\int_0^\infty {\rm d}A_{t}^\nu\right]:=\EE_x\left[A_{\infty}^\nu\right]
\end{align*}
and $Rf(x):=R\nu(x)$ when $\nu({\rm d}x)=f(x){\rm d}x$ for any $f \in \mathfrak{B}_b(\R^d)$.

We say that a non-negative Borel measure $\nu$ on $\R^d$ (resp. a non-negative symmetric Borel function $\phi$ on $\R^d \times \R^d$ vanishing on the diagonal) is Green-bounded ($\nu \in S_{D_0}({\bf X})$ (resp. $\phi \in J_{D_0}({\bf X})$), in notations) if 
\begin{align*}
\sup_{x \in \R^d}\EE_x\left[A_{\infty}^{\nu}\right]<\infty, \quad  \left({\rm resp.}~~~\sup_{x \in \R^d}\EE_x\left[\sum_{t > 0}\phi(X_{t-},X_t)\right]<\infty\right).
\end{align*} 
Throughout this section, we assume that $F \in J_{D_0}({\bf X})$. 
\vskip 0.1cm
Let ${\bf F}$ be a non-local linear operator defined as in \eqref{dfnonlocaloper}. We assume that $\mu$ is finite, $\mu(\R^d)<\infty$ and ${\bf F}1 \in L^1(\R^d)$. Now we shall define the \emph{scattering length} ${\sf \Gamma}(\mu+F)$ relative to the additive functional \eqref{AF}. In analogy with classical potential theory, it seems to be natural to define ${\sf \Gamma}(\mu+F)$ by the total mass of $(-\Delta)^{\alpha /2}U_{\mu+F}$,
\begin{align*}
{\sf \Gamma}(\mu+F)=\int_{\R^d}(-\Delta)^{\alpha /2}U_{\mu+F}(x){\rm d}x.
\end{align*} 
We note that the capacitary potential $U_{\mu+F}$ defined in \eqref{capacitary} satisfies the following formal equation
\begin{align}
(-\Delta)^{\alpha /2}U_{\mu+F}=(1-U_{\mu+F})\mu + {\bf F}1-d{\bf F}U_{\mu+F}.
\label{DEforUF}
\end{align}
Indeed, let $\widetilde{\bf X}=(X_t, \widetilde{\PP}_x)$ be the transformed process of ${\bf X}$ by the pure jump Girsanov transform defined by 
\begin{align}\label{PureGir}
Y_t^F:=\exp \left(-\sum_{0<s\le t}F(X_{s-},X_s) + \int_0^t{\bf F}1(X_s){\rm d}s\right), \quad t \in (0,\infty).
\end{align} 
Note that the multiplicative functional \eqref{PureGir} is a uniformly integrable martingale under the assumption (cf. \cite[Theorem 3.2]{ChenUF}), because $e^{-F}-1 \ge \delta -1$ for some $\delta >0$ by the boundedness of $F$ and 
\begin{align*}
\sup_{x \in \R^d}\EE_x\left[\int_0^\infty \!\int_{\R^d}\frac{\left(e^{-F(X_{s},y)}-1\right)^2}{|X_{s}-y|^{d+\alpha}}{\rm d}y{\rm d}t\right]  &\le \sup_{x \in \R^d}\EE_x\left[\int_0^\infty \!\int_{\R^d}\frac{F(X_{s},y)}{|X_{s}-y|^{d+\alpha}}{\rm d}y{\rm d}t\right] \\
&=\sup_{x \in \R^d}\EE_x\left[\sum_{t>0}F(X_{t-}, X_t)\right] <\infty.
\end{align*}
From this fact, we see that the transformed process $\widetilde{\bf X}$ is to be a transient and conservative symmetric stable-like process on $\R^d$ in the sense of \cite{ChenKumagai}. Let $(-\widetilde{\Delta})^{\alpha /2}$ be the generator of $\widetilde{\bf X}$. Then $(-\widetilde{\Delta})^{\alpha /2}$ is formally given by
\begin{align}\label{TildeDelta}
(-\widetilde{\Delta})^{\alpha /2} = (-\Delta)^{\alpha /2} + d{\bf F} - {\bf F}1.
\end{align}
 It is known that a PCAF of ${\bf X}$ can be regarded as a PCAF of $\wt{\bf X}$ (\cite[Lemma 2.2]{Song}). Thus we see from \cite[Lemma 4.9]{KimKuwae:TAMS} and \cite[Lemma 3.2]{Takeda:2006} that
\begin{align}\label{UFRNF1}
U_{\mu+F}(x)&=1-\EE_x\left[e^{-A_{\infty}^\mu -\sum_{t>0}F(X_{t-},X_t)}\right] \nonumber \\ 
&=1-\widetilde{\EE}_x\left[e^{-A_{\infty}^\mu -\int_0^{\infty}{\bf F}1(X_t){\rm d}t}\right] \nonumber \\
&=\widetilde{\EE}_x\left[\int_0^{\infty} e^{-A_t^\mu - \int_0^t{\bf F}1(X_s){\rm d}s}\left({\rm d}A_t^\mu +{\bf F}1(X_t){\rm d}t\right)\right].  
\end{align}
The equation \eqref{UFRNF1} implies that $U_{\mu+F}$ satisfies the following formal equation
\begin{align}\label{eqmuF1}
\left(\mu + {\bf F}1+(-\widetilde{\Delta})^{\alpha /2}\right)U_{\mu+F} = \mu + {\bf F}1.
\end{align}
Hence we have \eqref{DEforUF} by applying \eqref{TildeDelta} to \eqref{eqmuF1}. We note that the relation \eqref{DEforUF} is rigorously established for $\mu$ and $F$ whenever $\mu$ and ${\bf F}1$ are belonging to $L^2(\R^d)$. 
\vskip 0.2cm
Now, let us define the scattering length ${\sf \Gamma}(\mu+F)$ relative to \eqref{AF} by the total mass of  $(-\Delta)^{\alpha /2}U_{\mu+F}$, that is, 
\begin{equation}
{\sf \Gamma}(\mu + F):= \int_{\R^d}(1-U_{\mu+F})(x)\mu({\rm d}x)+\int_{\R^d}{\bf F}(1-U_{\mu+F})(x){\rm d}x.
\label{scattering length}
\end{equation}

The following expression and monotonicity of the scattering length \eqref{scattering length} play a crucial role throughout this paper. 

\begin{lem}\label{anotherexpression}
{\rm (1)}~~The scattering length \eqref{scattering length} also can be rewritten as
\begin{align}
{\sf \Gamma}(\mu+F)=\int_{\R^d}\EE_x\left[e^{-A_{\infty}^{\mu} - \sum_{t>0}F(X_{t-},X_t)}\right]\left(\mu({\rm d}x) +{\bf F}1(x){\rm d}x\right). \label{express} 
\end{align}
{\rm (2)}~~Let $\nu$ be a positive finite smooth measure on $\R^d$ and $G$ be a symmetric positive bounded Borel function on $\R^d \times \R^d$ vanishing on the diagonal set satisfying ${\bf G}1 \in L^1(\R^d)$, where ${\bf G}$ is the non-local operator defined as in \eqref{dfnonlocaloper} for $G$. If $\mu \le \nu$ and $F \le G$, then ${\sf \Gamma}(\mu+F) \le {\sf \Gamma}(\nu+G)$. 
\end{lem}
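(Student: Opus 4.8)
For part (1), the plan is to start from the definition \eqref{scattering length} and rewrite the two integrands using the defining identity $U_{\mu+F}(x) = 1 - \EE_x[e^{-A^\mu_\infty - \sum_{t>0}F(X_{t-},X_t)}]$ from \eqref{capacitary}. The key observation is that $1 - U_{\mu+F}(x)$ is precisely $\EE_x[e^{-A^\mu_\infty - \sum_{t>0}F(X_{t-},X_t)}]$, so the first term $\int_{\R^d}(1-U_{\mu+F})(x)\,\mu({\rm d}x)$ is already in the desired form. For the second term, I would use the fact that ${\bf F}$ is a positive linear operator (it is an integral operator against the positive kernel $C_{d,\alpha}(1-e^{-F(x,y)})|x-y|^{-d-\alpha}$) together with the identity ${\bf F}(1-U_{\mu+F}) = {\bf F}1 - {\bf F}U_{\mu+F}$; but more directly, by Fubini I can write $\int_{\R^d}{\bf F}(1-U_{\mu+F})(x)\,{\rm d}x = \int_{\R^d}(1-U_{\mu+F})(y)\,{\bf F}^{*}1(y)\,{\rm d}y$, and since the kernel of ${\bf F}$ is symmetric in $(x,y)$, the adjoint ${\bf F}^{*}$ equals ${\bf F}$, giving $\int_{\R^d}(1-U_{\mu+F})(y)\,{\bf F}1(y)\,{\rm d}y$. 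Combining the two pieces and substituting $1-U_{\mu+F}(x) = \EE_x[e^{-A^\mu_\infty - \sum_{t>0}F(X_{t-},X_t)}]$ yields \eqref{express}. The integrability needed to justify Fubini comes from the standing assumptions $\mu(\R^d)<\infty$, ${\bf F}1 \in L^1(\R^d)$, and $0 \le 1-U_{\mu+F} \le 1$.

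For part (2), the strategy is to exploit the probabilistic expression \eqref{express} just established. Under the hypotheses $\mu \le \nu$ and $F \le G$, the additive functional $A^\mu_t + \sum_{0<s\le t}F(X_{s-},X_s)$ is dominated pathwise by $A^\nu_t + \sum_{0<s\le t}G(X_{s-},X_s)$; in particular $A^\mu_\infty \le A^\nu_\infty$ (monotonicity of Revuz correspondence) and $\sum_{t>0}F(X_{t-},X_t) \le \sum_{t>0}G(X_{t-},X_t)$ pathwise. Hence $\EE_x[e^{-A^\mu_\infty - \sum F}] \ge \EE_x[e^{-A^\nu_\infty - \sum G}]$ pointwise, i.e. $1-U_{\mu+F} \ge 1 - U_{\nu+G}$, equivalently $U_{\mu+F} \le U_{\nu+G}$. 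This comparison has to be combined with the comparison of the measures against which we integrate. Writing ${\sf \Gamma}(\nu+G) = \int (1-U_{\nu+G})\,{\rm d}\nu + \int (1-U_{\nu+G})\,{\bf G}1\,{\rm d}x$ and noting that $F \le G$ forces ${\bf F}1 \le {\bf G}1$ pointwise (the kernel $1-e^{-F} \le 1-e^{-G}$), we get $(1-U_{\mu+F})\,\mu \le (1-U_{\nu+G})\,\nu$ as measures and $(1-U_{\mu+F})\,{\bf F}1 \le (1-U_{\nu+G})\,{\bf G}1$ as functions, so integrating gives ${\sf \Gamma}(\mu+F) \le {\sf \Gamma}(\nu+G)$.

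I expect the only genuinely delicate point to be the rewriting of the ${\bf F}$-term in part (1) — specifically, making the self-adjointness of ${\bf F}$ and the application of Fubini's theorem rigorous. One must check that the double integral $\int\int (1-U_{\mu+F})(y)\,C_{d,\alpha}(1-e^{-F(x,y)})|x-y|^{-d-\alpha}\,{\rm d}y\,{\rm d}x$ is absolutely convergent; this follows because $0 \le 1-U_{\mu+F}\le 1$ and $\int {\bf F}1(x)\,{\rm d}x < \infty$ by assumption, so Tonelli applies to the nonnegative integrand and symmetry of the kernel then lets us swap the roles of $x$ and $y$. Everything else is bookkeeping: the pathwise domination of additive functionals in part (2) is standard once one recalls that $\mu \le \nu$ implies $A^\mu \le A^\nu$ up to the usual exceptional set, and that the jump sum is monotone in $F$ term by term.
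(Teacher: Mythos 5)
Your proof of part (1) is correct and follows the paper's argument exactly: write out the integral $\int_{\R^d}{\bf F}(1-U_{\mu+F})(x)\,{\rm d}x$ as a double integral, use the symmetry of the kernel $(1-e^{-F(x,y)})|x-y|^{-d-\alpha}$ together with Tonelli to move the operator onto the constant function $1$, and substitute $1-U_{\mu+F}=\EE_\cdot[e^{-A_\infty^\mu-\sum_{t>0}F(X_{t-},X_t)}]$.

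Your proof of part (2), however, contains a genuine error, and it is worth understanding exactly where. You correctly observe that $\mu\le\nu$ and $F\le G$ imply
\[
1-U_{\mu+F}(x)=\EE_x\!\left[e^{-A_\infty^\mu-\sum_{t>0}F(X_{t-},X_t)}\right]\;\ge\;\EE_x\!\left[e^{-A_\infty^\nu-\sum_{t>0}G(X_{t-},X_t)}\right]=1-U_{\nu+G}(x),
\]
and that $\mu+{\bf F}1\le\nu+{\bf G}1$. But these two comparisons go in \emph{opposite} directions: the survival factor is \emph{larger} for $\mu+F$ while the reference measure is \emph{smaller}. Your claimed inequality $(1-U_{\mu+F})\,\mu\le(1-U_{\nu+G})\,\nu$ therefore does not follow — for it to hold you would need $1-U_{\mu+F}\le 1-U_{\nu+G}$, which is the reverse of what you proved. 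Indeed, if naive term-by-term comparison worked, the monotonicity of the scattering length would be trivial; it is not, for exactly the same reason that the monotonicity of capacity is not trivial from an energy-integral formula. The paper's actual argument is substantially more involved: it passes to the pure-jump Girsanov transform $\widetilde{\bf X}$ of ${\bf X}$ by $F$, writes $U_{\mu+F}=\widetilde{R}\bigl(\EE_\cdot[\,\cdot\,](\mu+{\bf F}1)\bigr)$ with $\widetilde{R}$ the $0$-resolvent of $\widetilde{\bf X}$, chooses a Kac-regular compact $K$ containing both supports so that the equilibrium potential $\widetilde{U}_K=\widetilde{R}\gamma_K\equiv 1$ on the supports, and then uses symmetry of $\widetilde{r}(x,y)$ plus Fubini to rewrite ${\sf \Gamma}(\mu+F)=\int U_{\mu+F}(y)\,\gamma_K({\rm d}y)$. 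Only after this dual representation does the pointwise bound $U_{\mu+F}\le U_{\nu+G}$ — which is what you established, in the correct direction — yield the claim, with a final approximation step to remove the boundedness assumption on the supports. You would need to supply this (or an equivalent) potential-theoretic mechanism to repair your proof of part (2).
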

\vskip 0.2cm
\begin{proof}~~(1): The expression \eqref{express} is a consequence of the symmetry of $F$. Indeed,  
\begin{align*}
\int_{\R^d}{\bf F}(1-U_{\mu+F})(x){\rm d}x&=C_{d,\alpha}\int_{\R^d}\int_{\R^d}\frac{\left(1-e^{-F(x,y)}\right)\EE_y[e^{-A_{\infty}^{\mu} - \sum_{t>0}F(X_{t-},X_t)}]}{|x-y|^{d+\alpha}}{\rm d}y{\rm d}x \\
&=\int_{\R^d}\EE_x\left[e^{-A_{\infty}^{\mu} - \sum_{t>0}F(X_{t-},X_t)}\right]{\bf F}1(x){\rm d}x.
\end{align*}
(2): The proof is a mimic of the proof of \cite[Proposition 3.2]{Siudeja:Illinois} in our settings. It is clear that $U_{\mu+F} \le U_{\nu+G}$ under the assumption. By virtue of \cite[Lemma 4.9]{KimKuwae:TAMS} and \cite[Lemma 3.2]{Takeda:2006},  
\begin{align}\label{UtildeRzeta}
U_{\mu+F}(x)&=1-\EE_x\left[e^{-A_{\infty}^\mu - \sum_{t>0}F(X_{t-},X_t)}\right] =1-\widetilde{\EE}_x\left[e^{-A_{\infty}^\mu -\int_0^{\infty}{\bf F}1(X_t){\rm d}t}\right] \nonumber \\
&=\widetilde{\EE}_x\left[\int_0^{\infty} \widetilde{\EE}_{X_t}\left[e^{-A_{\infty}^\mu -\int_0^{\infty}{\bf F}1(X_s){\rm d}s}\right]\left({\rm d}A_t^\mu +{\bf F}1(X_t){\rm d}t\right)\right] \nonumber \\
&=\widetilde{\EE}_x\left[\int_0^{\infty} \EE_{X_t}\left[e^{-A_{\infty}^\mu - \sum_{s>0}F(X_{s-},X_s)}\right]({\rm d}A_t^\mu +{\bf F}1(X_t){\rm d}t)\right] \nonumber \\
&:= \widetilde{R}\left(\EE_{\cdot}\left[e^{-A_{\infty}^\mu - \sum_{t>0}F(X_{t-},X_t)}\right](\mu+ {\bf F}1)\right)(x),
\end{align}
where $\widetilde{R}$ denotes the 0-order resolvent operator of $\widetilde{\bf X}$ with the resolvent kernel $\wt{r}(x,y)$. 
Let $K \subset \R^d$ be a Kac's regular set in the sense that the Lebesgue penetration time of $K$ by the transformed process $\widetilde{\bf X}$ is the same as the hitting time of $K$. In this case, the capacity potential $\widetilde{U}_K$ of $K$ is given by 
\begin{align*}
\widetilde{U}_K(x)=1-\widetilde{\EE}_x\left[e^{-\int_0^{\infty} V_K(X_t){\rm d}t}\right]
\end{align*} 
for $V_K=\infty$ on $K$ and 0 off $K$. Moreover, analogously to \eqref{UtildeRzeta}, we see that $\widetilde{U}_K(x)=\widetilde{R}\gamma_K(x)$ for the equilibrium measure $\gamma_K$ on $K$ (cf. \cite{Stroock}).

First, we suppose that the topological supports ${\rm supp} [\mu+{\bf F}1]$ and ${\rm supp} [\nu+{\bf G}1]$ are bounded such that ${\rm supp} [\mu+{\bf F}1] \cup {\rm supp} [\nu+{\bf G}1] \subset K$. In view of \eqref{UtildeRzeta} and the fact that $\widetilde{U}_K=\widetilde{R}\gamma_K=1$ on ${\rm supp} [\mu+{\bf F}1] \cup {\rm supp} [\nu+{\bf G}1]$, we then have
\begin{align}\label{mono}
{\sf \Gamma}(\mu+F)
&=\int_{{\rm supp} [\mu+{\bf F}1]}\widetilde{R}\gamma_K(x)\EE_x\left[e^{-A_{\infty}^{\mu} - \sum_{t>0}F(X_{t-},X_t)}\right](\mu({\rm d}x) +{\bf F}1(x){\rm d}x) \nonumber \\
&=\int_{{\rm supp} [\mu+{\bf F}1]}\int_{\R^d}\widetilde{r}(x,y)\EE_x\left[e^{-A_{\infty}^{\mu} - \sum_{t>0}F(X_{t-},X_t)}\right](\mu({\rm d}x) +{\bf F}1(x){\rm d}x)\gamma_K({\rm d}y) \nonumber \\
&= \int_{{\rm supp} [\mu+{\bf F}1]}\widetilde{R}\left(\EE_{\cdot}\left[e^{-A_{\infty}^\mu - \sum_{t>0}F(X_{t-},X_t)}\right](\mu+ {\bf F}1)\right)(y)\gamma_K({\rm d}y) \nonumber \\
&= \int_{{\rm supp} [\mu+{\bf F}1]}U_{\mu+F}(y)\gamma_K({\rm d}y) \le \int_{{\rm supp} [\nu+{\bf G}1]}U_{\nu+G}(y)\gamma_K({\rm d}y)= {\sf \Gamma}(\nu+G).
\end{align}

Now we prove the monotonicity \eqref{mono} without assumptions on the supports. Let $\nu_n$ (resp. $G_n$) be a non-decreasing sequence of finite positive smooth measures on $\R^d$ (resp. a non-decreasing sequence of symmetric positive bounded Borel functions on $\R^d \times \R^d$ vanishing on the diagonal satisfying ${\bf G}_n1 \in L^1(\R^d)$) such that ${\rm supp}[\nu_n+{\bf G}_n1]$ is bounded, ${\rm supp}[\nu_n+{\bf G}_n1] \subset K$ for any $n\ge 1$ and $\nu_n + G_n \nearrow \nu +G$ as $n \to \infty$. Set $\mu_n:=\nu_n \wedge \mu$ and $F_n:=G_n \wedge F$. Then we have
\begin{align*}
&\int_{\R^d}\EE_x\left[e^{-A_{\infty}^{\mu_n} - \sum_{t>0}F_n(X_{t-},X_t)}\right](\mu_n({\rm d}x) +{\bf F}_n1(x){\rm d}x)  \\
&\quad ={\sf \Gamma}(\mu_n+F_n) \le {\sf \Gamma}(\nu_n+G_n)=\int_{\R^d}\EE_x\left[e^{-A_{\infty}^{\nu_n} - \sum_{t>0}G_n(X_{t-},X_t)}\right](\nu_n({\rm d}x) +{\bf G}_n1(x){\rm d}x).
\end{align*} 
Both integrands are bounded above by $\int_{\R^d}(\nu({\rm d}x) + {\bf G}1(x){\rm d}x) < \infty$, hence we have the assertion by the dominated convergence theorem.
\end{proof}

\begin{rem}\label{advansecond}
The scattering length \eqref{scattering length} is also expressed as 
\begin{align*}
{\sf \Gamma}(\mu+F)=\lim_{t \to \infty}\frac{1}{t}\int_{\R^d}\left(1-\EE_x\left[e^{-A_t^\mu - \sum_{0<s \le t}F(X_{s-},X_s)}\right]\right){\rm d}x.
\end{align*}
Indeed, it follows from \cite[Lemma 4.9]{KimKuwae:TAMS} and \eqref{express} that 
\begin{align*}
{\sf \Gamma}(\mu+F)&=\int_{\R^d}\widetilde{\EE}_x\left[e^{-A_{\infty}^{\mu}-\int_0^{\infty}{\bf F}1(X_s){\rm d}s}\right]\left(\mu({\rm d}x) +{\bf F}1(x){\rm d}x\right) \\
&=\lim_{t \to \infty}\frac{1}{t}\int_{\R^d}\left(1-\widetilde{\EE}_x\left[e^{-A_t^{\mu}-\int_0^t{\bf F}1(X_s){\rm d}s}\right]\right){\rm d}x \\
&=\lim_{t \to \infty}\frac{1}{t}\int_{\R^d}\left(1-\EE_x\left[e^{-A_t^\mu - \sum_{0<s \le t}F(X_{s-},X_s)}\right]\right){\rm d}x.
\end{align*}
In the second equality above, we use the result due to \cite[(2.2)]{Takeda:2010} (also \cite[Theorem 2.2]{FitzPY}).
\end{rem}

Now we are going to study the behaviour of the scattering length ${\sf \Gamma}(p\mu + pF)$ when $p \to \infty$. As we mentioned in the previous section, this problem was decisively solved in the case $F\equiv 0$ by Takeda \cite{Takeda:2010}, through the random time change argument for Dirichlet forms: let $\nu$ be a positive finite smooth measure on $\R^d$ and ${\sf S}_{\nu}$ the fine support of $A_t^\nu$. Then 
\begin{align}\label{Takedalocal}
\lim_{p \to \infty}{\sf \Gamma}(p\nu)={\rm Cap} ({\sf S}_{\nu}).
\end{align}
However, we cannot apply time change method to our problem directly because our scattering length contains a discontinuous additive functional. 

Let $\tau_t$ be the right continuous inverse of the PCAF $A_t^\mu + \int_0^t{\bf F}1(X_s){\rm d}s$, that is, $\tau_t:=\inf\{s>0 \mid A_s^\mu +\int_0^s{\bf F}1(X_u){\rm d}u >t\}$. Let denote by ${\sf S}_{\mu+{\bf F}1}$ the fine support of $A_t^\mu +\int_0^t{\bf F}1(X_s){\rm d}s$,
\begin{align*}
{\sf S}_{\mu+{\bf F}1}=\left\{x \in \R^d ~\Big|~ \PP_x(\tau_0 =0)=1\right\},
\end{align*}
and by ${\rm Cap}$ the capacity relative to the Dirichlet form $(\E, \F)$ (see \cite{CFbook}, \cite{FOT}). For $p \ge 1$, set
\begin{align*}
{\bf F}^{(p)}1(x):=C_{d,\alpha}\int_{\R^d}\frac{\left(1-e^{-pF(x,y)}\right)}{|x-y|^{d+\alpha}}{\rm d}y. 
\end{align*}
Clearly, ${\bf F}1(x) (={\bf F}^{(1)}1(x)) \le {\bf F}^{(p)}1(x)$ for $p \ge 1$. 
For notational convenience, we let $A_t^{{\bf F}1}:=\int_0^t{\bf F}1(X_s){\rm d}s$.

\begin{lem}\label{largescatter}
For any $\varepsilon >0$
\begin{align*}
\lim_{p\to \infty}{\sf \Gamma}\left(pF+p^{1+\varepsilon}\mu+p^{1+\varepsilon}{\bf F}1\right)={\rm Cap} ({\sf S}_{\mu+{\bf F}1}).
\end{align*}
In particular, $\limsup_{p\to \infty}{\sf \Gamma}(p\mu+pF) \le {\rm Cap} ({\sf S}_{\mu+{\bf F}1})$.  
\end{lem}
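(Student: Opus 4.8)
\textbf{Proof proposal for Lemma \ref{largescatter}.}

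The plan is to reduce the statement to the local result \eqref{Takedalocal} of Takeda applied to a suitable comparison measure, after absorbing the discontinuous part into a continuous additive functional via the expression \eqref{express}. First I would write, using Lemma \ref{anotherexpression}(1),
\begin{align*}
{\sf \Gamma}\bigl(pF+p^{1+\varepsilon}\mu+p^{1+\varepsilon}{\bf F}1\bigr)
=\int_{\R^d}\EE_x\Bigl[e^{-p^{1+\varepsilon}A_{\infty}^{\mu}-p^{1+\varepsilon}A_{\infty}^{{\bf F}1}-\sum_{t>0}pF(X_{t-},X_t)}\Bigr]\bigl(p^{1+\varepsilon}\mu({\rm d}x)+p^{1+\varepsilon}{\bf F}1(x){\rm d}x+{\bf F}^{(p)}1(x){\rm d}x\bigr),
\end{align*}
where the coefficient of the ${\rm d}x$-term comes from the non-local operator ${\bf F}^{(p)}$ attached to the function $pF$. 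Since ${\bf F}1\le{\bf F}^{(p)}1$ and each term is nonnegative, this quantity is sandwiched between ${\sf \Gamma}\bigl(p^{1+\varepsilon}\mu+p^{1+\varepsilon}{\bf F}1\bigr)$ (taking $F\equiv0$ in the jump part but keeping $p^{1+\varepsilon}{\bf F}1$ in the smooth part) and a slightly larger quantity where the jump part is still present. The key observation is that both of these sandwiching quantities are, up to the presence of the exponential damping from $pF$, of the form ${\sf \Gamma}(p^{1+\varepsilon}\nu)$ with $\nu=\mu+{\bf F}1$, to which \eqref{Takedalocal} applies directly, giving limit ${\rm Cap}({\sf S}_{\mu+{\bf F}1})$ since the fine support of $\mu+{\bf F}1$ is exactly ${\sf S}_{\mu+{\bf F}1}$.

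More precisely, for the lower bound I would use monotonicity (Lemma \ref{anotherexpression}(2)) together with the estimate ${\bf F}^{(p)}1\ge{\bf F}1$ to get
\[
{\sf \Gamma}\bigl(pF+p^{1+\varepsilon}\mu+p^{1+\varepsilon}{\bf F}1\bigr)\ge{\sf \Gamma}\bigl(p^{1+\varepsilon}\mu+p^{1+\varepsilon}{\bf F}1\bigr)={\sf \Gamma}\bigl(p^{1+\varepsilon}(\mu+{\bf F}1)\bigr),
\]
so $\liminf_{p\to\infty}$ of the left side is at least ${\rm Cap}({\sf S}_{\mu+{\bf F}1})$ by \eqref{Takedalocal}. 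For the upper bound I would bound the exponential $e^{-\sum_{t>0}pF(X_{t-},X_t)}\le1$ and note $e^{-p^{1+\varepsilon}A_\infty^\mu-p^{1+\varepsilon}A_\infty^{{\bf F}1}}=e^{-p^{1+\varepsilon}A_\infty^{\mu+{\bf F}1}}$, reducing the integrand to $\EE_x[e^{-p^{1+\varepsilon}A_\infty^{\mu+{\bf F}1}}]$ against the measure $p^{1+\varepsilon}(\mu+{\bf F}1)({\rm d}x)+{\bf F}^{(p)}1(x){\rm d}x$. The first piece yields exactly ${\sf \Gamma}(p^{1+\varepsilon}(\mu+{\bf F}1))\to{\rm Cap}({\sf S}_{\mu+{\bf F}1})$; the task is then to show the remaining piece
\[
\int_{\R^d}\EE_x\bigl[e^{-p^{1+\varepsilon}A_\infty^{\mu+{\bf F}1}}\bigr]{\bf F}^{(p)}1(x)\,{\rm d}x
\]
tends to $0$ as $p\to\infty$. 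Here I would use that ${\bf F}^{(p)}1(x)\le C_{d,\alpha}\int_{\R^d}|x-y|^{-(d+\alpha)}\,{\rm d}y$ restricted to where $F>0$, combined with the pointwise bound ${\bf F}^{(p)}1\le p\,{\bf F}^{(1)}1$ is false in general, so instead I would split according to whether $x$ lies in (a fine neighbourhood of) ${\sf S}_{\mu+{\bf F}1}$ or not: off the fine support $\EE_x[e^{-p^{1+\varepsilon}A_\infty^{\mu+{\bf F}1}}]$ stays bounded but the measure ${\bf F}^{(p)}1\,{\rm d}x$ concentrates, whereas on the fine support $\EE_x[e^{-p^{1+\varepsilon}A_\infty^{\mu+{\bf F}1}}]\to0$. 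Controlling ${\bf F}^{(p)}1$ uniformly is possible because $1-e^{-pF(x,y)}\le1$, so ${\bf F}^{(p)}1(x)\le C_{d,\alpha}\int_{\{y:F(x,y)>0\}}|x-y|^{-(d+\alpha)}\,{\rm d}y$, which is finite and independent of $p$ under $F\in J_{D_0}({\bf X})$ with ${\bf F}^{(p)}1\in L^1$; then dominated convergence with dominating function this $p$-free bound, together with ${\bf F}^{(p)}1(x)\uparrow{\bf F}^{(\infty)}1(x)$ and $\EE_x[e^{-p^{1+\varepsilon}A_\infty^{\mu+{\bf F}1}}]\to\1_{\{x\notin{\sf S}_{\mu+{\bf F}1}\}}$ (q.e.), shows the limit of the extra term equals $\int_{\R^d\setminus{\sf S}_{\mu+{\bf F}1}}{\bf F}^{(\infty)}1(x)\,{\rm d}x$. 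I would then argue this vanishes because ${\rm supp}[{\bf F}1]\subset{\sf S}_{\mu+{\bf F}1}$ forces $F(x,\cdot)=0$ for a.e.\ $x\notin{\sf S}_{\mu+{\bf F}1}$, hence ${\bf F}^{(\infty)}1=0$ there. Combining the two bounds gives $\lim_{p\to\infty}{\sf \Gamma}(pF+p^{1+\varepsilon}\mu+p^{1+\varepsilon}{\bf F}1)={\rm Cap}({\sf S}_{\mu+{\bf F}1})$.

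The final assertion $\limsup_{p\to\infty}{\sf \Gamma}(p\mu+pF)\le{\rm Cap}({\sf S}_{\mu+{\bf F}1})$ then follows by monotonicity: since $p\mu\le p^{1+\varepsilon}\mu$ and $pF\le pF+p^{1+\varepsilon}{\bf F}1$ in the combined sense (more carefully, one compares the two scattering lengths via Lemma \ref{anotherexpression}(2) after noting that adding the extra smooth piece $p^{1+\varepsilon}{\bf F}1$ and enlarging the coefficient of $\mu$ only increases the scattering length), we get ${\sf \Gamma}(p\mu+pF)\le{\sf \Gamma}(pF+p^{1+\varepsilon}\mu+p^{1+\varepsilon}{\bf F}1)$ for every $p\ge1$ and $\varepsilon>0$, and letting $p\to\infty$ yields the bound.

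\textbf{Expected main obstacle.} The delicate point is the upper-bound argument, specifically showing that the extra contribution $\int_{\R^d}\EE_x[e^{-p^{1+\varepsilon}A_\infty^{\mu+{\bf F}1}}]\,{\bf F}^{(p)}1(x)\,{\rm d}x$ does not produce a surplus in the limit. This requires (a) a $p$-uniform integrable majorant for ${\bf F}^{(p)}1$ — available from $1-e^{-pF}\le1$ and the hypothesis ${\bf F}^{(p)}1\in L^1$, but one must check $\sup_p\|{\bf F}^{(p)}1\|_{L^1}<\infty$, which is exactly $\|{\bf F}^{(\infty)}1\|_{L^1}<\infty$; and (b) identifying the pointwise q.e.\ limit of $\EE_x[e^{-p^{1+\varepsilon}A_\infty^{\mu+{\bf F}1}}]$ as the indicator of the complement of the fine support, which is standard but needs the fine-support characterization and a q.e.\ statement to match the Lebesgue integration. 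If instead one prefers to avoid the ${\bf F}^{(\infty)}$ computation, an alternative is to dominate ${\bf F}^{(p)}1(x)\,{\rm d}x$ by $p^{1+\varepsilon}{\bf F}1(x)\,{\rm d}x$ for $x$ where this holds and handle the exceptional region separately using $1-e^{-pF(x,y)}\le 1\wedge pF(x,y)$; in any case the crux is the interchange of limit and integral for the non-local term.
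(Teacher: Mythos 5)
Your lower bound (monotonicity plus ${\bf F}^{(p)}1\ge{\bf F}1$ plus Takeda's result \eqref{Takedalocal}) matches the paper and is fine, as is the derivation of the final assertion from the first one. The problem is the upper bound, and it stems from a concrete error: you assert that the pointwise bound ${\bf F}^{(p)}1\le p\,{\bf F}^{(1)}1$ ``is false in general,'' and on that basis abandon it. In fact it is true: for $p\ge1$ and $a\ge0$ one has $1-e^{-pa}\le p(1-e^{-a})$ (for integer $p$ factor $1-e^{-pa}=(1-e^{-a})\sum_{j=0}^{p-1}e^{-ja}\le p(1-e^{-a})$, and the general case follows since $\frac{d}{da}\bigl[p(1-e^{-a})-(1-e^{-pa})\bigr]=p(e^{-a}-e^{-pa})\ge0$). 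Integrating against $C_{d,\alpha}|x-y|^{-(d+\alpha)}\,{\rm d}y$ gives ${\bf F}^{(p)}1\le p\,{\bf F}1$. This is precisely the inequality the paper uses (there stated as ${\bf F}^{(q)}1\le q{\bf F}1$), and it makes the upper bound essentially trivial: after dropping $e^{-p\sum F}\le1$, the extra ${\bf F}^{(p)}1(x)\,{\rm d}x$ in \eqref{express} is absorbed as $p{\bf F}1\le p^{-\varepsilon}\cdot p^{1+\varepsilon}{\bf F}1$, producing $\Gamma\bigl(pF+p^{1+\varepsilon}(\mu+{\bf F}1)\bigr)\le\bigl(1+p^{-\varepsilon}\bigr)\Gamma\bigl(p^{1+\varepsilon}(\mu+{\bf F}1)\bigr)$, and the factor tends to $1$.

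The route you take instead, via dominated convergence with ${\bf F}^{(\infty)}1(x)=C_{d,\alpha}\int_{\{F(x,\cdot)>0\}}|x-y|^{-(d+\alpha)}\,{\rm d}y$ as dominating function, does not work. That function is not ``finite and independent of $p$'' as you claim: whenever $\{y:F(x,y)>0\}$ contains a neighbourhood of $x$ (which happens on a set of positive Lebesgue measure in the paper's own Example \ref{examplprop}, where $F(x,y)\asymp\phi(|x-y|)$ near the diagonal with $\phi>0$ off $0$), the integral diverges because $|x-y|^{-(d+\alpha)}$ is non-integrable at the diagonal. So ${\bf F}^{(\infty)}1$ can be $+\infty$ on a set of positive measure, and ${\sup_p\|{\bf F}^{(p)}1\|_{L^1}}$ can be infinite even though each $\|{\bf F}^{(p)}1\|_{L^1}$ is finite. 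Consequently there is no dominated convergence, and the pointwise product $\EE_x\bigl[e^{-p^{1+\varepsilon}A_\infty^{\mu+{\bf F}1}}\bigr]\,{\bf F}^{(p)}1(x)$ is an indeterminate form ($0\cdot\infty$) on ${\sf S}_{\mu+{\bf F}1}$, which you cannot resolve without a quantitative rate. You do hint at the correct fix in your ``alternative'' remark; the short elementary inequality above is exactly what closes the argument, and there is no exceptional region to handle.
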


\begin{proof}
The last assertion easily follows from the first one with the monotonicity of the scattering length. Put $k=1/(1+\varepsilon)$. From the expression \eqref{express}, one can easily see that 
\begin{align*}
&{\sf \Gamma}\left(p^kF +p\mu+p{\bf F}1\right)  \\
&\qquad =\int_{\R^d}\EE_x\left[e^{-p^k\sum_{t>0}F(X_{t-},X_t)-pA_{\infty}^{\mu} - pA_{\infty}^{{\bf F}1}}\right]\left(p\mu({\rm d}x) +{\bf F}^{(p^{k})}1(x){\rm d}x+p{\bf F}1(x){\rm d}x\right).
\end{align*}
Since ${\bf F}^{(q)}1 \le q{\bf F}1$ for any $q\ge 1$, 
\begin{align*}
&{\sf \Gamma}\left(p^kF +p\mu+p{\bf F}1\right) \\
&\qquad \le \int_{\R^d}\EE_x\left[e^{-pA_{\infty}^{\mu} - pA_{\infty}^{{\bf F}1}}\right]\left(\left(1+p^{k-1}\right)p\mu({\rm d}x)+\left(p^k{\bf F}1+p{\bf F}1\right)(x){\rm d}x\right) \\
&\qquad = \left(1+p^{k-1}\right){\sf \Gamma}\left(p\mu+p{\bf F}1\right). \nonumber
\end{align*}
Therefore we have by the monotonicity of the scattering length that
\begin{align*}
{\sf \Gamma}\left(p\mu+p{\bf F}1\right)\le {\sf \Gamma}\left(p^kF +p\mu+p{\bf F}1\right) \le \left(1+p^{k-1}\right){\sf \Gamma}\left(p\mu+p{\bf F}1\right).
\end{align*}
By applying \eqref{Takedalocal}, the scattering lengths of both sides of the above converge to ${\rm Cap} ({\sf S}_{\mu+{\bf F}})$ as $p \to \infty$, which implies the first assertion. 
\end{proof}

Now we prove the first main result of this paper.
\vskip 0.2cm
\emph{Proof of Theorem \ref{behaviourofscattering}}: 
Let $\psi(p)$ be the function which appeared in the condition \eqref{lowerpsi}. By the monotonicity of the scattering length, we have for some $C>0$
\begin{align*}
{\sf \Gamma}\left(\frac{\psi(p)}{n}\mu+\frac{C\psi(p)}{n}{\bf F}1\right) \le {\sf \Gamma}\left(pF +\frac{\psi(p)}{n}\mu+\frac{C\psi(p)}{n}{\bf F}1\right) \le {\sf \Gamma}\left(pF+p^{1+\varepsilon}\mu+p^{1+\varepsilon}{\bf F}1\right)
\end{align*}
for any $n \ge 1$ and $\varepsilon >0$. Then, by Lemma \ref{largescatter} and applying \eqref{Takedalocal} again, one can get that 
\begin{align*}
\lim_{p\to \infty}{\sf \Gamma}\left(pF +\frac{\psi(p)}{n}\mu+\frac{C\psi(p)}{n}{\bf F}1\right) 
 ={\rm Cap} ({\sf S}_{\mu+{\bf F}1}). 
\end{align*}
From this and the condition \eqref{lowerpsi}, we see that
\begin{align*}
&\liminf_{p \to \infty}{\sf \Gamma}\left(p\mu+pF\right) \ge \liminf_{p \to \infty}{\sf \Gamma}\left(\frac{\psi(p)}{n}\mu+pF\right) \\
&\quad =\liminf_{p\to \infty}\int_{\R^d}\EE_x\left[e^{-p\sum_{t>0}F(X_{t-},X_t)-\frac{\psi(p)}{n}A_{\infty}^\mu}\right]\left(\frac{\psi(p)}{n}\mu({\rm d}x)+{\bf F}^{(p)}1(x){\rm d}x\right) \\
&\quad =\frac{n}{n+1}\liminf_{p\to \infty}\int_{\R^d}\!\EE_x\left[e^{-p\sum_{t>0}F(X_{t-},X_t)-\frac{\psi(p)}{n}A_{\infty}^\mu}\right] \\& \quad \quad \quad \quad \quad \quad \quad \quad \quad \quad \quad \quad \quad \quad \quad \quad \cdot\left(\frac{n+1}{n}\frac{\psi(p)}{n}\mu({\rm d}x)+{\bf F}^{(p)}1(x){\rm d}x+\frac{1}{n}{\bf F}^{(p)}1(x){\rm d}x\right) \\
&\quad \ge \frac{n}{n+1} \liminf_{p\to \infty}\int_{\R^d}\EE_x\left[e^{-p\sum_{t>0}F(X_{t-},X_t)-\frac{\psi(p)}{n}A_{\infty}^\mu-\frac{C\psi(p)}{n}A_{\infty}^{{\bf F}1}}\right] \\
& \quad \quad \quad \quad \quad \quad \quad \quad \quad \quad \quad ~\quad \quad \quad \quad \quad \quad \quad \cdot \left({\bf F}^{(p)}1(x){\rm d}x+\frac{\psi(p)}{n}\mu({\rm d}x)+\frac{C\psi(p)}{n}{\bf F}1(x){\rm d}x\right) \\
&\quad = \frac{n}{n+1}\lim_{p\to \infty}{\sf \Gamma}\left(pF +\frac{\psi(p)}{n}\mu+\frac{C\psi(p)}{n}{\bf F}1\right)=\frac{n}{n+1}{\rm Cap} ({\sf S}_{\mu+{\bf F}1}).
\end{align*}
Letting $n \to \infty$, we have
\begin{align}\label{low}
{\rm Cap} ({\sf S}_{\mu+{\bf F}1}) \le \liminf_{p \to \infty}{\sf \Gamma}(p\mu+pF).
\end{align}
The proof will be finished by the last assertion of Lemma \ref{largescatter} and \eqref{low}. \qed
\vskip 0.3cm
Now, we consider some concrete examples of $F$s satisfying the condition \eqref{lowerpsi}. Denote by $B(a,b)$ the open ball in $\R^d$ with center $a$ and radius $b$.
 
\begin{exam}\label{examplprop}
Let $F$ be the function on $\R^d \times \R^d$ such that for $R, R'>0$
\begin{align*}
F(x,y)&=\frac{1}{2}\phi(|x-y|)\left({\bf 1}_{B(x,R')}(y){\bf 1}_{B(0,R)}(x)+{\bf 1}_{B(y,R')}(x){\bf 1}_{B(0,R)}(y)\right. \\
&\qquad \qquad \left.+ {\bf 1}_{B(y,R')}(x){\bf 1}_{B(x,R')}(y){\bf 1}_{B(0,R+R')\setminus B(0,R)}(x){\bf 1}_{B(0,R+R')\setminus B(0,R)}(y)\right), 
\end{align*}
where $\phi$ is a non-negative strictly increasing smooth function on $[0,\infty)$ satisfying $\phi(0)=0$, $\phi(t)=o(t^{\alpha})$ $(t \to 0)$ and 
\begin{align}
\phi^{-1}(p^{-1}t)^\alpha \le \frac{1}{\psi(p)}\phi^{-1}(t)^\alpha, \quad t\ge 0, ~{\rm large}~ p \ge 1
\label{phiinverse}
\end{align}
for a positive function $\psi(p)$ such that $\psi(p) \le p$ and $\psi(p) \to \infty$ as $p\to \infty$. 
Then the condition \eqref{lowerpsi} holds for this $F$. First, let us take $x \in B(0,R)$. In this case, $F$ is given by 
\begin{align*}
F(x,y)=
\left\{
\begin{array}{lll}
\phi(|x-y|) & y \in B(x,R') \cap B(0,R) \\
\frac{1}{2}\phi(|x-y|) & y \in B(x,R') \cap B(0,R)^c \\
0 & {\rm otherwise}.
\end{array}
\right.
\end{align*}
Then we have 
\begin{align}\label{F1upper}
{\bf F}1(x)&=C_{d,\alpha}\int_{\R^d}\frac{1-e^{-F(x,y)}}{|x-y|^{d+\alpha}}{\rm d}y=C_{d,\alpha}\int_{B(x,R')}\frac{1-e^{-F(x,y)}}{|x-y|^{d+\alpha}}{\rm d}y \nonumber \\
&=C_{d,\alpha}\left\{\int_{B(x,R')\cap B(0,R)}\frac{1-e^{-\phi(|x-y|)}}{|x-y|^{d+\alpha}}{\rm d}y+\int_{B(x,R')\cap B(0,R)^c}\frac{1-e^{-\frac{1}{2}\phi(|x-y|)}}{|x-y|^{d+\alpha}}{\rm d}y\right\} \nonumber \\
&\le C_{d,\alpha}\int_{B(x,R')}\frac{1-e^{-\phi(|x-y|)}}{|x-y|^{d+\alpha}}{\rm d}y.
\end{align}
By using integration by parts, the right-hand side of \eqref{F1upper} is equal to
\begin{align}\label{cortran}
C_{d,\alpha}'\int_0^{R'}\frac{1-e^{-\phi(r)}}{r^{1+\alpha}}{\rm d}r &=C_{d,\alpha}'\left\{\frac{e^{-\phi(R')}-1}{\alpha (R')^\alpha}+\frac{1}{\alpha}\int_0^{R'}r^{-\alpha}\phi'(r)e^{-\phi(r)}{\rm d}r\right\} \nonumber \\
&=C_{d,\alpha}'\left\{\frac{e^{-\phi(R')}-1}{\alpha (R')^\alpha}+\frac{1}{\alpha}\int_0^{\phi(R')}\frac{1}{\phi^{-1}(t)^\alpha}e^{-t}{\rm d}t\right\},
\end{align} 
where $C_{d,\alpha}'$ is a positive constant depending on $d$ and $\alpha$. On the other hand, by a similar calculation above with the inequality $1-e^{-a-b} \le (1-e^{-a})+(1-e^{-b})$ for any $a,b \ge 0$ and the condition \eqref{phiinverse}, we see 
\begin{align}\label{F1lo}
{\bf F}^{(p)}1(x)&=C_{d,\alpha}\left\{\int_{B(x,R')\cap B(0,R)}\frac{1-e^{-p\phi(|x-y|)}}{|x-y|^{d+\alpha}}{\rm d}y+\int_{B(x,R')\cap B(0,R)^c}\frac{1-e^{-\frac{p}{2}\phi(|x-y|)}}{|x-y|^{d+\alpha}}{\rm d}y\right\} \nonumber \\
&\ge C_{d,\alpha}\int_{B(x,R')}\frac{1-e^{-\frac{p}{2}\phi(|x-y|)}}{|x-y|^{d+\alpha}}{\rm d}y \ge \frac{C_{d,\alpha}}{2}\int_{B(x,R')}\frac{1-e^{-p\phi(|x-y|)}}{|x-y|^{d+\alpha}}{\rm d}y \nonumber \\
&=\frac{C_{d,\alpha}'}{2}\left\{\frac{e^{-p\phi(R')}-1}{\alpha (R')^\alpha}+\frac{1}{\alpha}\int_0^{p\phi(R')}\frac{1}{\phi^{-1}(p^{-1}t)^\alpha}e^{-t}{\rm d}t\right\} \nonumber \\
&\ge \frac{\psi(p)}{2}C_{d,\alpha}'\left\{\frac{e^{-\phi(R')}-1}{\alpha (R')^\alpha}+\frac{1}{\alpha}\int_0^{\phi(R')}\frac{1}{\phi^{-1}(t)^\alpha}e^{-t}{\rm d}t\right\} 
\end{align}
for large $p \ge 1$. Hence we can confirm by \eqref{F1upper}, \eqref{cortran} and \eqref{F1lo} that
\begin{align}\label{conclude1}
{\bf F}^{(p)}1(x) \ge \frac{1}{2}\psi(p) {\bf F}1(x), \quad x \in B(0,R), ~{\rm large}~ p\ge 1.
\end{align}

Next, take $x \in B(0,R+R')\setminus B(0,R)$. In this case, $F$ is given by 
\begin{align*}
F(x,y)=\left\{
\begin{array}{lll}
\frac{1}{2}\phi(|x-y|) & y \in B(x,R') \cap B(0,R) \\
\frac{1}{2}\phi(|x-y|) & y \in B(x,R') \cap B(0,R)^c \\
0 & {\rm otherwise}.
\end{array}
\right.
\end{align*} 
Then, by the same calculations as \eqref{F1upper}, \eqref{cortran} and \eqref{F1lo}  
\begin{align*}
{\bf F}1(x) \le C_{d,\alpha}\int_{B(x,R')}\frac{1-e^{-\phi(|x-y|)}}{|x-y|^{d+\alpha}}{\rm d}y =C_{d,\alpha}'\left\{\frac{e^{-\phi(R')}-1}{\alpha (R')^\alpha}+\frac{1}{\alpha}\int_0^{\phi(R')}\frac{1}{\phi^{-1}(t)^\alpha}e^{-t}{\rm d}t\right\}
\end{align*}
and
\begin{align*}
{\bf F}^{(p)}1(x)  
&\ge \frac{C_{d,\alpha}}{2}\int_{B(x,R')}\frac{1-e^{-p\phi(|x-y|)}}{|x-y|^{d+\alpha}}{\rm d}y \\
&\ge \frac{\psi(p)}{2}C_{d,\alpha}'\left\{\frac{e^{-\phi(R')}-1}{\alpha (R')^\alpha}+\frac{1}{\alpha}\int_0^{\phi(R')}\frac{1}{\phi^{-1}(t)^\alpha}e^{-t}{\rm d}t\right\} = \frac{\psi(p)}{2}{\bf F}1(x)
\end{align*}
for large $p \ge 1$. Therefore, we can confirm \eqref{conclude1} for $x \in B(0,R+R')\setminus B(0,R)$. For $x \in B(0,R+R')^c$,  \eqref{conclude1} is trivial because ${\bf F}^{(p)}1(x)=0$ for any $p \ge 1$. Hence we have \eqref{conclude1} for any $x \in \R^d$. Moreover, 
for $x \in B(0,R+R')$
\begin{align*}
{\bf F}^{(p)}1(x) &= C_{d,\alpha}\left(\int_{B(0,R+R')\cap B(x,1)}\frac{1-e^{-pF(x,y)}}{|x-y|^{d+\alpha}}{\rm d}y + \int_{B(0,R+R')\cap B(x,1)^c}\frac{1-e^{-pF(x,y)}}{|x-y|^{d+\alpha}}{\rm d}y\right) \\
&\le C_{d,\alpha}\int_{B(x,1)}\frac{1-e^{-\frac{3p}{2}\phi(|x-y|)}}{|x-y|^{d+\alpha}}{\rm d}y+C_{d,\alpha}\int_{\overline{B(0,R+R')}\cap B(x,1)^c}\left(1-e^{-\frac{3p}{2}\|\phi\|_{\infty}}\right){\rm d}y \\
&\le C_{d,\alpha}'\int_0^1\frac{1-e^{-\frac{3p}{2}\phi(r)}}{r^{\alpha +1}}{\rm d}r + C_{d,\alpha}\left|\overline{B(0,R+R')}\right|.
\end{align*} 
Since $\phi(t)=o(t^{\alpha})$ $(t \to 0)$, it follows that ${\bf F}^{(p)}1$ is bounded on $B(0,R+R')$ and is zero on $B(0,R+R')^c$ for any $p\ge 1$. This shows that ${\bf F}^{(p)}1 \in L^1(\R^d)$ for any $p \ge 1$. By a similar way as in the proof of \cite[Proposition 7.10(3)]{CKK}, one can also prove that ${\bf F}^{(p)}1 \in L^{\ell}(\R^d) (\ell \ge 1)$ for any $p \ge 1$. We omit the details.

There are many functions satisfying the condition \eqref{phiinverse}. For instance, they can be given by $\phi(t)=t^{\beta}$, $\phi(t)=t^\beta / (1+t)^\beta$, $\phi(t):=\phi^{(1)}(t)=\log (1+t^\beta)$ and its iterated function $\phi^{(n)}(t)=\phi(\phi^{(n-1)}(t))$ ($n\ge 2$) for $\beta > \alpha$. For these functions, it holds that $F \in J_{D_0}({\bf X})$ (cf. \cite[Example 2.1]{ChenAnalyticF}) and we can take the function $\psi(p)$ which appeared in \eqref{phiinverse} as 
\begin{align*}
\psi(p)=p^{\alpha /\beta}.
\end{align*}
Hence, the scattering length ${\sf \Gamma}(p\mu + pF)$ induced by the functions $\phi$ above converges to ${\rm Cap}({\sf S}_{\mu+{\bf F}1})$ as $p \to \infty$,  in view of Theorem \ref{behaviourofscattering}. 
\end{exam}
\vskip 0.2cm
The behaviour of scattering lengths for small potentials is of independent of interest. Let us consider the case that $\mu({\rm d}x)=V(x){\rm d}x$ with $V$ being a positive $L^1(\R^d)$-function. 

\begin{prop}\label{small}
Suppose that $V$ and ${\bf F}1$ have bounded supports in $\R^d$. Then  
\begin{align*}
\lim_{\varepsilon \to 0}\frac{1}{\varepsilon}{\sf \Gamma}(\varepsilon V +\varepsilon F) = \int_{\R^d}\left(V + \widehat{\bf F}1\right)(x){\rm d}x,
\end{align*}
where $\widehat{\bf F}1(x):=C_{d,\alpha}\int_{\R^d}F(x,y)|x-y|^{-d-\alpha}{\rm d}y$. 
\end{prop}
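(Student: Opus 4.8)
The plan is to start from the key expression \eqref{express} for the scattering length, namely
\begin{align*}
{\sf \Gamma}(\varepsilon V+\varepsilon F)=\int_{\R^d}\EE_x\left[e^{-\varepsilon A_{\infty}^{V}-\varepsilon\sum_{t>0}F(X_{t-},X_t)}\right]\left(\varepsilon V(x)\,{\rm d}x+{\bf F}^{(\varepsilon)}1(x)\,{\rm d}x\right),
\end{align*}
where $A_t^{V}=\int_0^t V(X_s)\,{\rm d}s$ and ${\bf F}^{(\varepsilon)}1(x)=C_{d,\alpha}\int_{\R^d}(1-e^{-\varepsilon F(x,y)})|x-y|^{-d-\alpha}\,{\rm d}y$. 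Dividing by $\varepsilon$, the claim amounts to showing that
\begin{align*}
\frac{1}{\varepsilon}{\sf \Gamma}(\varepsilon V+\varepsilon F)=\int_{\R^d}\EE_x\left[e^{-\varepsilon A_{\infty}^{V}-\varepsilon\sum_{t>0}F(X_{t-},X_t)}\right]\left(V(x)+\frac{1}{\varepsilon}{\bf F}^{(\varepsilon)}1(x)\right){\rm d}x
\end{align*}
converges to $\int_{\R^d}(V+\widehat{\bf F}1)(x)\,{\rm d}x$ as $\varepsilon\to 0$.

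The argument splits naturally into two pointwise limits plus a domination. First, $\varepsilon^{-1}{\bf F}^{(\varepsilon)}1(x)\to \widehat{\bf F}1(x)$ for each $x$: since $0\le 1-e^{-\varepsilon F(x,y)}\le \varepsilon F(x,y)$ and $(1-e^{-\varepsilon F(x,y)})/\varepsilon\uparrow F(x,y)$ as $\varepsilon\downarrow 0$, monotone (or dominated, using $F\in J_{D_0}({\bf X})$, hence $\widehat{\bf F}1<\infty$ a.e. and in fact bounded since ${\bf F}1\le\widehat{\bf F}1$ and conversely $\widehat{\bf F}1$ is controlled because $F$ is bounded with bounded support) convergence gives the claim; in particular $\varepsilon^{-1}{\bf F}^{(\varepsilon)}1\le \widehat{\bf F}1\in L^1(\R^d)$ uniformly in $\varepsilon\le 1$, using the bounded-support hypothesis on ${\bf F}1$ (equivalently on $\widehat{\bf F}1$). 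Second, for each $x$, $A_{\infty}^{V}<\infty$ and $\sum_{t>0}F(X_{t-},X_t)<\infty$ $\PP_x$-a.s. (the former by Green-boundedness of $V\,{\rm d}x$ since $V\in L^1$ has bounded support, the latter by $F\in J_{D_0}({\bf X})$), so $e^{-\varepsilon A_{\infty}^{V}-\varepsilon\sum_{t>0}F(X_{t-},X_t)}\uparrow 1$ as $\varepsilon\to 0$; these exponentials are bounded by $1$. Combining, the integrand converges pointwise to $V(x)+\widehat{\bf F}1(x)$ and is dominated by $V(x)+\varepsilon^{-1}{\bf F}^{(\varepsilon)}1(x)\le V(x)+\widehat{\bf F}1(x)\in L^1(\R^d)$, so the dominated convergence theorem finishes the proof.

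The main obstacle, such as it is, is bookkeeping the domination of $\varepsilon^{-1}{\bf F}^{(\varepsilon)}1$ uniformly in $\varepsilon$ and checking it lies in $L^1$: one needs the elementary monotonicity $\varepsilon\mapsto(1-e^{-\varepsilon F})/\varepsilon$ is nonincreasing on $(0,\infty)$ (so the supremum over $\varepsilon\le 1$ is the value at $\varepsilon=1$, namely ${\bf F}1$, OR the limit $\widehat{\bf F}1$ if one wants the cleanest bound), together with the integrability of $\widehat{\bf F}1$, which follows from $F$ being bounded and having bounded support (so $\widehat{\bf F}1$ is bounded by a calculation like the one in Example \ref{examplprop} and supported in a bounded set). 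Everything else is a direct application of dominated convergence; no random time change or deep potential theory is needed here, in contrast to Theorem \ref{behaviourofscattering}.
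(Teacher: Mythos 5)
Your proposal is correct and reaches the same conclusion, but by a genuinely different (and in fact more elementary) route than the paper's for the key step: establishing that $U_{\varepsilon V+\varepsilon F}\to 0$ almost everywhere. The paper uses the representation $U_{V+F}=\widetilde R(\cdot)$ from \eqref{UtildeRzeta} together with Song's two-sided estimate \eqref{estRtilde} on the resolvent kernel $\widetilde r(x,y)$ of the transformed process $\widetilde{\bf X}$ to derive the $L^1$-bound $\int_B U_{\varepsilon V+\varepsilon F}\,{\rm d}x\le C(B)\,{\sf \Gamma}(\varepsilon V+\varepsilon F)\to 0$, and then concludes a.e. convergence on $B$. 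You instead show the equivalent pointwise statement $\EE_x[e^{-\varepsilon A_\infty^V-\varepsilon\sum F}]\to 1$ directly, by dominated convergence \emph{inside} the expectation, using $A_\infty^V<\infty$ and $\sum_{t>0}F(X_{t-},X_t)<\infty$ $\PP_x$-a.s. This bypasses the Girsanov-transformed process and the external resolvent estimate entirely, which is a real simplification; the rest (the identification $\varepsilon^{-1}{\bf F}^{(\varepsilon)}1\uparrow\widehat{\bf F}1$, the domination by $V+\widehat{\bf F}1\in L^1$, and the outer DCT) is the same as the paper.

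Two small caveats worth fixing in your write-up. First, $V\in L^1$ with bounded support does \emph{not} imply Green-boundedness of $V\,{\rm d}x$ (one can cook up $V\in L^1$ compactly supported with $RV(0)=\infty$); what you actually need, and what does hold, is the weaker fact $RV(x)<\infty$ for a.e.\ $x$, which follows by Fubini from $RV\in L^1_{\rm loc}(\R^d)$. That is enough, since you only need the pointwise limit $\EE_x[\cdots]\to 1$ for a.e.\ $x$ with respect to $(V+\widehat{\bf F}1)\,{\rm d}x$. Second, the sentence about monotonicity has the direction reversed: since $\varepsilon\mapsto(1-e^{-\varepsilon F})/\varepsilon$ is nonincreasing, its supremum over $\varepsilon\in(0,1]$ is the limit $\widehat{\bf F}1$ (the value at $\varepsilon=1$, namely ${\bf F}1$, is the \emph{infimum}); you arrive at the correct dominating function $\widehat{\bf F}1$ in the end, but the parenthetical as written is misleading. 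Relatedly, you do not need $\widehat{\bf F}1$ to be bounded — only $\widehat{\bf F}1\in L^1(\R^d)$, which follows since $F$ bounded gives ${\bf F}1\le\widehat{\bf F}1\le C\,{\bf F}1$ and ${\bf F}1\in L^1$ with bounded support by hypothesis.
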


\begin{proof}
It is clear from (\ref{express}) that ${\sf \Gamma}(V+F) \le \int_{\R^d}(V(x){\rm d}x+{\bf F}1(x){\rm d}x)$. 
For convenience, assume that $V$ and ${\bf F}1$ have a common bounded support $B \subset \R^d$. By \eqref{UtildeRzeta}, we see that
\begin{align}\label{UtildeR}
U_{V+F}(x):= \widetilde{R}\left(\EE_{\cdot}\left[e^{-\int_0^{\infty}V(X_t){\rm d}t - \sum_{t>0}F(X_{t-},X_t)}\right](V+ {\bf F}1)\right)(x).
\end{align}
It is known in \cite[Corollary 2.8]{Song} that the resolvent kernel $\widetilde{r}(x,y)$ of $\widetilde{R}$ satisfies
\begin{align}\label{estRtilde}
\frac{C^{-1}}{|x-y|^{d-\alpha}} \le \widetilde{r}(x,y) \le \frac{C}{|x-y|^{d-\alpha}}, \quad x,y \in \R^d
\end{align} 
for some $C >0$. 
Then  
\begin{align}
\int_{B}U_{V+F}(x){\rm d}x&=\int_B \widetilde{R}\left(\EE_{\cdot}\left[e^{-\int_0^{\infty}V(X_t){\rm d}t - \sum_{t>0}F(X_{t-},X_t)}\right](V+ {\bf F}1)\right)\!(x){\rm d}x \nonumber \\
&\le C\int_B\int_{\R^d}\frac{1}{|x-y|^{d-\alpha}}\EE_{y}\left[e^{-\int_0^{\infty}V(X_t){\rm d}t - \sum_{t>0}F(X_{t-},X_t)}\right](V+{\bf F}1)(y){\rm d}y\,{\rm d}x \nonumber \\
&\le C(B){\sf \Gamma}(V+F) \le C(B)\int_{\R^d}(V+{\bf F}1)(x){\rm d}x,  
\label{UFdomi}
\end{align}
where $C(B):=C\sup_{y \in \R^d}\int_B |x-y|^{\alpha -d}{\rm d}x$. From this and the fact that ${\bf F}^{(\varepsilon)}1 \to 0$ as $\varepsilon \to 0$, we see  
\begin{align*}
\int_B U_{\varepsilon V+ \varepsilon F}(x){\rm d}x \longrightarrow 0, \quad {\rm as} \!\!\quad \varepsilon \to 0,
\end{align*}
which implies that $U_{\varepsilon V+ \varepsilon F} \to 0$ a.e. on $B$. The same are true for $U_{\varepsilon V+ \varepsilon F}V$ and $\varepsilon^{-1}U_{\varepsilon V+ \varepsilon F}{\bf F}^{(\varepsilon)}1$ because $\varepsilon^{-1}{\bf F}^{(\varepsilon)}1 \to \widehat{\bf F}1$ as $\varepsilon \to 0$. 
Now, by the definition of the scattering length and the dominated convergence theorem, 
\begin{align*}
&\left|\int_{\R^d}\left(V+\varepsilon^{-1}{\bf F}^{(\varepsilon)}1\right)(x){\rm d}x - \varepsilon^{-1}{\sf \Gamma}(\varepsilon V +\varepsilon F)\right|  \\
&\quad \quad \quad = \int_{B}U_{\varepsilon V+\varepsilon F}(x)\left(V+\varepsilon^{-1}{\bf F}^{(\varepsilon)}1\right)(x){\rm d}x \longrightarrow 0, \quad {\rm as}\!\!\quad \varepsilon \to 0.
\end{align*}
The proof is complete.
\end{proof}

%%%%%%%%%%%%%%%%%%%%%%%%%%%%%%%%%%%%%%%%%%%%%%%%%%%%%%%%%%%%%%%%%%%%%%%%%%%%%%%%%%%%%%%%%%%%%%%%%
\section{Bounds for the bottom of the spectrum via scattering length}\label{sec:EigenEst}  
%%%%%%%%%%%%%%%%%%%%%%%%%%%%%%%%%%%%%%%%%%%%%%%%%%%%%%%%%%%%%%%%%%%%%%%%%%%%%%%%%%%%%%%%%%%%%%%%%
In the rest of the sections, we are going to consider the case that $\mu$ is an absolutely continuous measure with respect to the Lebesgue measure having $V \ge 0$ as a density function, that is, $\mu({\rm d}x)=V(x){\rm d}x$. 

In this section, we give a two-sided bound for the bottom of the spectrum of the fractional Neumann Laplacian on the unit cube $D:=D_{1,0}$,  the cube of side length $1$ centered at $0$, in $\R^d$ with local and non-local perturbations. We assume that $V$ and ${\bf F}1$ are integrable functions supported on $D$.

Let $(-\Delta)^{\alpha /2}_N$ be the fractional Laplacian on $L^2(D)$ with the Neumann boundary condition, as the generator of the Dirichlet form on $L^2(D)$ defined by
\begin{align*}
&\E^{\rm ref}(f,g)=\frac{C_{d,\alpha}}{2}\int_{D}\int_{D}\frac{(f(x)-f(y))(g(x)-g(y))}{|x-y|^{d+\alpha}}{\rm d}x{\rm d}y \\
&\F^{\rm ref}_a=\left\{f \in L^2(D) : \int_{D}\int_{D}\frac{(f(x)-f(y))^2}{|x-y|^{d+\alpha}}{\rm d}x{\rm d}y <\infty\right\}.
\end{align*} 
It is known that $(\E^{\rm ref}, \F^{\rm ref}_a)$ is the active reflected Dirichlet form of $(\E^D, \F^D)$, the Dirichlet form on $L^2(D)$ associated with the part process ${\bf X}^D$ of ${\bf X}$ on $D$. The stochastic process ${\bf Y}=(Y_t, \PP_x)$ associated with $(\E^{\rm ref}, \F^{\rm ref}_a)$ (or $(-\Delta)^{\alpha /2}_N$) is then the reflected stable process on $D$ (cf. \cite{BBC}).

Define the bottom of the spectrum of the formal Schr${\rm \ddot{o}}$dinger operator ${\mathcal L}_{V+F}^N:=(-\Delta)^{\alpha /2}_N + V+ d{\bf F}$ on $L^2(D)$ by
\begin{align}\label{Neigen}
\lambda_1^N(V+F):=\inf_{\varphi \in \F^{\rm ref}_a}\frac{\E^{\rm ref}(\varphi, \varphi)+{\mathcal H}_D^{V+F}(\varphi, \varphi)}{\int_D\varphi(x)^2{\rm d}x}, 
\end{align} 
where
\begin{align}\label{pertubform}
{\mathcal H}_D^{V+F}(\varphi, \varphi)=\int_D\varphi(x)^2V(x){\rm d}x+\int_{D}\int_{D}\frac{\varphi(x)\varphi(y)\left(1-e^{-F(x,y)}\right)}{|x-y|^{d+\alpha}}{\rm d}x{\rm d}y.
\end{align}
\vskip 0.2cm
The next two propositions are originally due to \cite{Siudeja:Illinois} when $F\equiv 0$ (also \cite{Taylor:1976} for the Brownian motion). The proofs can be deduced by some modifications to the perturbation term. First, we give a upper bound for $\lambda_1^N(V+F)$.

\begin{prop}\label{upper}
There exists a constant $C_2(D) >0$ such that
\begin{align*}
\lambda_1^N(V+F) \le C_2(D){\sf \Gamma}(V+F)
\end{align*}
provided ${\sf \Gamma}(V+F)$ is small.
\end{prop}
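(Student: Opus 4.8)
The plan is to produce an explicit test function $\varphi$ in $\F^{\rm ref}_a$ whose Rayleigh quotient in \eqref{Neigen} is comparable to ${\sf \Gamma}(V+F)$. The natural candidate is $\varphi := 1 - U_{V+F}$ restricted to $D$, or a suitable normalization thereof; here $U_{V+F}$ is the capacitary potential \eqref{capacitary}, which is bounded between $0$ and $1$, so $\varphi$ is a bounded function on $D$ and hence lies in $\F^{\rm ref}_a$. The denominator $\int_D \varphi^2\,{\rm d}x$ is bounded below by $\int_D(1-U_{V+F})^2\,{\rm d}x$; since ${\sf \Gamma}(V+F)$ is small, the estimate \eqref{UFdomi}-type bound (or rather the argument in Proposition \ref{small}) forces $U_{V+F}$ to be small in $L^1(D)$, hence in measure, so $\int_D\varphi^2\,{\rm d}x \ge c_1(D) > 0$ for ${\sf \Gamma}(V+F)$ small enough. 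This handles the denominator.

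For the numerator the key is the formal identity \eqref{DEforUF}, namely $(-\Delta)^{\alpha/2}U_{V+F} = (1-U_{V+F})V + {\bf F}1 - d{\bf F}U_{V+F}$, which is rigorous when $V, {\bf F}1 \in L^2(\R^d)$ (and one approximates in general). Pairing this with $\varphi = 1-U_{V+F}$ and using that $(-\Delta)^{\alpha/2}$ applied to the constant $1$ vanishes, I expect
$$
\E^{\rm ref}(\varphi,\varphi) + {\mathcal H}_D^{V+F}(\varphi,\varphi) \le \E(1-U_{V+F}, 1-U_{V+F}) + {\mathcal H}^{V+F}(1-U_{V+F},1-U_{V+F})
$$
(the reflected form on $D$ being dominated by the full-space form plus a boundary contribution one must control, since $V$ and ${\bf F}1$ are supported on $D$), and the right side equals, after integrating the identity \eqref{DEforUF} against $1-U_{V+F}$, precisely
$$
\int_{\R^d}(1-U_{V+F})^2 V\,{\rm d}x + \int_{\R^d}{\bf F}1\cdot(1-U_{V+F})\,{\rm d}x - \int_{\R^d} d{\bf F}U_{V+F}\cdot (1-U_{V+F})\,{\rm d}x,
$$
which is bounded above by ${\sf \Gamma}(V+F)$ as given in \eqref{scattering length} together with \eqref{express} and Lemma \ref{anotherexpression}, using $0\le 1-U_{V+F}\le 1$ and the positivity of ${\bf F}$. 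Combining with the lower bound on the denominator gives $\lambda_1^N(V+F) \le {\sf \Gamma}(V+F)/c_1(D) =: C_2(D){\sf \Gamma}(V+F)$.

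The main obstacle I anticipate is the passage from the full-space Dirichlet form $(\E,\F)$ to the reflected form $(\E^{\rm ref},\F^{\rm ref}_a)$ on $D$: one needs that using $1-U_{V+F}$ (a globally defined function) as a test function, its reflected energy on $D$ does not exceed its global energy up to harmless constants, and that the non-local perturbation term ${\mathcal H}_D^{V+F}$ restricted to $D\times D$ matches the global one because $F$ is supported on $D\times D$. This is exactly the point where the excerpt says the proof follows \cite{Siudeja:Illinois} ``with some additional modification due to $F$'', so I would follow that template: truncate $U_{V+F}$ or use the reflected-Dirichlet-form comparison $\E^{\rm ref}(\varphi,\varphi)\le \E(\widetilde\varphi,\widetilde\varphi)$ for an appropriate extension $\widetilde\varphi$, and absorb the jump part of $F$ (supported in $D\times D$) directly. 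The rest — the algebraic manipulation of \eqref{DEforUF} and the smallness of $U_{V+F}$ — is routine given Lemma \ref{anotherexpression} and the argument already carried out in Proposition \ref{small}.
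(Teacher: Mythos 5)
Your overall strategy matches the paper's: take $\varphi:=1-U_{V+F}$ as test function, dominate $\E^{\rm ref}(\varphi,\varphi)$ by the full-space energy of $U_{V+F}$ (which is immediate, with nothing to control — $(\varphi(x)-\varphi(y))^2=(U_{V+F}(x)-U_{V+F}(y))^2$ pointwise, and the $D\times D$ integral is simply a piece of the $\R^d\times\R^d$ integral; there is no boundary contribution), invoke \eqref{DEforUF} for $L^2$-approximants $V_n,F_n$ and pass to the limit, and bound the denominator from below via \eqref{UFdomi} when ${\sf \Gamma}(V+F)$ is small. This is the paper's proof.

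However, the key algebraic step in your numerator estimate is wrong. You assert that pairing \eqref{DEforUF} against $1-U_{V+F}$ yields the full-space numerator $\E(U,U)+{\mathcal H}^{V+F}(1-U,1-U)$. It does not. That pairing gives
$$
\int_{\R^d}(1-U)(-\Delta)^{\alpha/2}U\,{\rm d}x
=\int_{\R^d}(1-U)^2V\,{\rm d}x+\int_{\R^d}(1-U)\,{\bf F}(1-U)\,{\rm d}x
={\mathcal H}^{V+F}(1-U,1-U),
$$
and this equals ${\sf \Gamma}(V+F)-\E(U,U)$, which is \emph{strictly smaller} than the numerator once $\E(U,U)>0$. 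The discrepancy comes from the fact that $1-U_{V+F}$ tends to $1$ at infinity: the integration-by-parts identity $\E(f,f)=\int f(-\Delta)^{\alpha/2}f\,{\rm d}x$ does not hold for $f=1-U$, and the missing ``boundary-at-infinity'' term is exactly $\int(-\Delta)^{\alpha/2}U\,{\rm d}x={\sf \Gamma}(V+F)$. If your chain of identities were correct it would bound the numerator by ${\sf \Gamma}-\E(U,U)$, contradicting the fact that the numerator is equal to ${\sf \Gamma}$. The correct move — and the one the paper carries out — is to pair \eqref{DEforUF} against $U$ itself (which is in the domain and decays), giving $\E(U,U)=\int U(1-U)V\,{\rm d}x+\int U\,{\bf F}(1-U)\,{\rm d}x$; adding ${\mathcal H}^{V+F}(1-U,1-U)$ and combining the $V$-terms and the ${\bf F}$-terms shows the numerator equals $\int(1-U)V\,{\rm d}x+\int{\bf F}(1-U)\,{\rm d}x={\sf \Gamma}(V+F)$ exactly. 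Your final conclusion happens to be right, but only because the correct value is attained with equality; the step asserting the claimed equality is a genuine gap.
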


\begin{proof}
Note that the infimum in \eqref{Neigen} may be taken over $L^2(D)$. Put $\varphi:=1-U_{V+F}$. Clearly $\varphi \in L^2(D)$. Let $V_n:=\1_{B(0,n)}(V\wedge n)$ and $F_{n}(\cdot, y):=\1_{B(0,n)}(F(\cdot,y)\wedge n)$ for $y \in \R^d$, where $B(0,n)$ is the open ball in $\R^d$ with center $0$ and radius $n$. Since $V_n$ and ${\bf F}_n1$ belong to $L^2(\R^d)$, the relation \eqref{DEforUF} is rigorously established for $V_n$ and $F_n$.  Therefore we have by Fatou's lemma that
\begin{align*}
&\lambda_1^N(V+F)\int_D\varphi(x)^2{\rm d}x \\
&\quad \!\le \frac{C_{d,\alpha}}{2}\int_{\R^d}\int_{\R^d}\liminf_{n \to \infty}\frac{(U_{V_n+F_n}(x)-U_{V_n+F_n}(y))^2}{|x-y|^{d+\alpha}}{\rm d}x{\rm d}y \\
&\quad \quad \!\!+ \int_{\R^d}\liminf_{n\to \infty}(1-U_{V_n+F_n})^2(x)V_n(x){\rm d}x+\int_{\R^d}\liminf_{n\to \infty}(1-U_{V_n+F_n})(x){\bf F}_n(1-U_{V_n+F_n})(x){\rm d}x \\
&\quad \!\le \frac{C_{d,\alpha}}{2}\liminf_{n\to \infty}\int_{\R^d}\int_{\R^d}\frac{(U_{V_n+F_n}(x)-U_{V_n+F_n}(y))^2}{|x-y|^{d+\alpha}}{\rm d}x{\rm d}y \\
&\quad \quad \!\!+ \liminf_{n\to \infty}\int_{\R^d}(1-U_{V_n+F_n})^2(x)V_n(x){\rm d}x+\liminf_{n\to \infty}\int_{\R^d}(1-U_{V_n+F_n})(x){\bf F}_n(1-U_{V_n+F_n})(x){\rm d}x \\
&\quad \!= \liminf_{n\to \infty}\left\{\int_{\R^d}U_{V_n+F_n}(x)(-\Delta)^{\alpha /2}U_{V_n+F_n}(x){\rm d}x + {\sf \Gamma}(V_n+F_n)\right. \\ 
&\quad \quad \!\!\left.-\int_{\R^d}U_{V_n+F_n}(x)(1-U_{V_n+F_n})(x)V_n(x){\rm d}x - \int_{\R^d}U_{V_n+F_n}(x){\bf F}_n(1-U_{V_n+F_n})(x){\rm d}x\right\}. \\
&\quad \!= \liminf_{n\to \infty}{\sf \Gamma}(V_n+F_n).
 \end{align*} 
The sequences $V_n$ and $F_n$ converge to $V$ and $F$, respectively. Thus by the monotonicities of the capacitary potential and the scattering length we see that $U_{V_n+F_n} \nearrow U_{V+F}$ and ${\sf \Gamma}(V_n+F_n) \nearrow {\sf \Gamma}(V+F)$ as $n \to \infty$, respectively. Hence we have 
\begin{align}\label{lambdatogamma}
\lambda_1^N(V+F)\int_D(1-U_{V+F})^2(x){\rm d}x \le \liminf_{n\to \infty}{\sf \Gamma}(V_n+F_n)={\sf \Gamma}(V+F).
\end{align}
From \eqref{UFdomi}, it holds that
\begin{align*}
\int_D(1-U_{V+F})^2(x){\rm d}x \ge 1 -2\int_DU_{V+F}(x){\rm d}x \ge 1 -2C(D){\sf \Gamma}(V+F), 
\end{align*}
Applying this to \eqref{lambdatogamma}, we obtain that 
\begin{align*}
\lambda_1^N(V+F)(1 -2C(D){\sf \Gamma}(V+F)) \le {\sf \Gamma}(V+F). 
\end{align*}
Now the assertion holds if we make ${\sf \Gamma}(V+F)$ so small that ${\sf \Gamma}(V+F) \le 1/(4C(D))$.  
\end{proof}

\begin{rem}\label{Notcube}
The result of Proposition \ref{upper} is valid for any bounded domain $D$.
\end{rem}
\vskip 0.3cm
Next, we turn to a lower bound for $\lambda_1^N(V+F)$. To do this, we need some facts on subordinated processes. Let ${\bf B}=(B_t, \PP_x)$ be a Brownian motion in $\R^d$ running twice the usual speed. Let ${\bf Z}=(Z_t, \PP_x)$ be a reflected Brownian motion on $D$, that is, ${\bf Z}$ is the process generated by the Laplacian with the Neumann boundary condition in $D$. We will derive symmetric stable processes from ${\bf B}$ and ${\bf Z}$ by using a subordination technique. Let $S_t$ be a positive $\alpha /2$-stable subordinator independent of ${\bf B}$ and ${\bf Z}$. Then the symmetric $\alpha$-stable process ${\bf X}$ is nothing but the subordinated process of ${\bf B}$ by $S_t$, that is, $X_t=B_{S_t}$. Let ${\bf W}=(W_t, \PP_x)$ be the subordinated process of ${\bf Z}$ by $S_t$. It is known that ${\bf W}$ is a stable-like process studied in \cite{ChenKumagai}, but it is, in general, different from ${\bf Y}$ the reflected stable process on $D$ associated with  $(\E^{\rm ref}, \F^{\rm ref}_a)$ (or $(-\Delta)^{\alpha /2}_N$) (cf. \cite{BBC}). 

Let denote by $-{\mathcal W}$ the generator of ${\bf W}$ and $\lambda_1^{\mathcal W}(V+F)$ the bottom of the spectrum of the Schr${\rm \ddot{o}}$dinger operator $-{\mathcal W} + V+d{\bf F}$. It is easy to check that for some $c_2> c_1 >0$
\begin{align}\label{lambdaVN}
c_1\lambda_1^{\mathcal W}(V+F) \le \lambda_1^N(V+F) \le c_2\lambda_1^{\mathcal W}(V+F).
\end{align}
Moreover, we can prove the following relation by using a simlilar method as in \cite[Lemma 4.1]{Siudeja:Illinois}: for any $t>0$
\begin{align}\label{Ptcomparison}
\EE_x\left[e^{-\int_0^tV(W_s){\rm d}s-\sum_{0<s \le t}F(W_{s-}, W_s)}\right] \le 1-U_{V+F}^T(x).
\end{align}
\vskip 0.3cm
\begin{prop}\label{lower}
There exists a constant $C_1(D)>0$ such that
\begin{align*}
C_1(D){\sf \Gamma}(V+F) \le \lambda_1^N(V+F).
\end{align*}
\end{prop}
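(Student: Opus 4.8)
The plan is to establish the lower bound $C_1(D)\,{\sf \Gamma}(V+F)\le\lambda_1^N(V+F)$ by working with the subordinated process ${\bf W}$, for which the comparison \eqref{Ptcomparison} is available, and then transferring the estimate to $\lambda_1^N(V+F)$ via \eqref{lambdaVN}. By \eqref{lambdaVN} it suffices to produce a constant $c_1(D)>0$ with $c_1(D)\,{\sf \Gamma}(V+F)\le\lambda_1^{\mathcal W}(V+F)$. The semigroup of the Schr\"odinger operator $-{\mathcal W}+V+d{\bf F}$ is the Feynman–Kac semigroup $P_t^{V+F}f(x)=\EE_x[e^{-\int_0^tV(W_s){\rm d}s-\sum_{0<s\le t}F(W_{s-},W_s)}f(W_s)]$ on $L^2(D)$; since $D$ is bounded this semigroup is compact, so $\lambda_1^{\mathcal W}(V+F)$ is a genuine principal eigenvalue and $e^{-t\lambda_1^{\mathcal W}(V+F)}$ is the top of the spectrum of $P_t^{V+F}$. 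Hence $e^{-t\lambda_1^{\mathcal W}(V+F)}=\|P_t^{V+F}\|_{L^2(D)\to L^2(D)}$ (operator norm), and in particular, testing against the constant function $\1_D$ (whose $L^2(D)$-norm is $1$ since $|D|=1$), one gets $e^{-t\lambda_1^{\mathcal W}(V+F)}\ge\int_D P_t^{V+F}\1_D(x)\,{\rm d}x=\int_D\EE_x[e^{-\int_0^tV(W_s){\rm d}s-\sum_{0<s\le t}F(W_{s-},W_s)}]\,{\rm d}x$.

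Next I would turn this into an upper bound on $1-e^{-t\lambda_1^{\mathcal W}(V+F)}$, and thus a lower bound on $\lambda_1^{\mathcal W}(V+F)$. Using \eqref{Ptcomparison}, which gives $\EE_x[e^{-\int_0^tV(W_s){\rm d}s-\sum_{0<s\le t}F(W_{s-},W_s)}]\le 1-U_{V+F}^T(x)$, we obtain
\begin{align*}
1-e^{-t\lambda_1^{\mathcal W}(V+F)}\le \int_D U_{V+F}^T(x)\,{\rm d}x,
\end{align*}
where $U_{V+F}^T$ is the (truncated / time-$T$) capacitary potential appearing in \eqref{Ptcomparison}. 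The right-hand side should then be comparable to ${\sf \Gamma}(V+F)$: integrating $U_{V+F}$ over its support and using the representation \eqref{UtildeR} together with the kernel bound \eqref{estRtilde} exactly as in \eqref{UFdomi} gives $\int_D U_{V+F}(x)\,{\rm d}x\le C(D)\,{\sf \Gamma}(V+F)$, and monotonicity in $T$ (or letting $T\to\infty$ in \eqref{Ptcomparison} after it has been set up) lets one replace $U_{V+F}^T$ by $U_{V+F}$ at the cost of constants. Combining, $1-e^{-t\lambda_1^{\mathcal W}(V+F)}\le C(D)\,{\sf \Gamma}(V+F)$ for a fixed convenient $t$ (say $t=1$). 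Since $1-e^{-s}\ge s/2$ for $s$ in a bounded range, and ${\sf \Gamma}(V+F)$ may be assumed small (as in Proposition \ref{upper}; for large ${\sf \Gamma}(V+F)$ the bound is trivial after adjusting $C_1(D)$ because $\lambda_1^N$ is bounded below by a fixed positive multiple of $\lambda_1^N(0)>0$ plus a monotone term), this yields $\lambda_1^{\mathcal W}(V+F)\ge c\,(1-e^{-\lambda_1^{\mathcal W}(V+F)})\ge c'\,{\sf \Gamma}(V+F)^{-1}\cdot(\cdots)$ — more precisely, rearranging gives $\lambda_1^{\mathcal W}(V+F)\ge -\log(1-C(D){\sf \Gamma}(V+F))\ge C(D){\sf \Gamma}(V+F)$, and then \eqref{lambdaVN} finishes the proof with $C_1(D)=c_1 C(D)$.

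The step I expect to be the main obstacle is the rigorous justification of the chain "top of $L^2(D)$-spectrum of $P_t^{V+F}$ $=$ $e^{-t\lambda_1^{\mathcal W}(V+F)}$ $\ge$ $\langle P_t^{V+F}\1_D,\1_D\rangle$", i.e. confirming that $\lambda_1^{\mathcal W}(V+F)$ is indeed the principal eigenvalue of the Feynman–Kac semigroup on the bounded domain $D$ in the presence of the possibly singular measure perturbation $V\,{\rm d}x+d{\bf F}$, and that this semigroup is positivity-preserving, irreducible and Hilbert–Schmidt so that Perron–Frobenius theory applies and the variational quantity in \eqref{Neigen} (transported via \eqref{lambdaVN}) coincides with $-t^{-1}\log\|P_t^{V+F}\|$. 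This is essentially the argument of \cite[Lemma 4.1]{Siudeja:Illinois} adapted to ${\bf W}$ and to the extra non-local term $d{\bf F}$; I would cite the properties of Feynman–Kac semigroups for stable-like processes under Green-bounded perturbations ($V\in S_{D_0}$, $F\in J_{D_0}$) and the two-sided heat kernel bounds for ${\bf W}$ from \cite{ChenKumagai} to guarantee ultracontractivity and hence the Hilbert–Schmidt property. The remaining estimate $\int_D U_{V+F}\,{\rm d}x\le C(D)\,{\sf \Gamma}(V+F)$ is routine, being a rerun of \eqref{UFdomi}, and the passage between $U^T_{V+F}$ and $U_{V+F}$ is a monotone-convergence argument; neither of these poses a genuine difficulty.
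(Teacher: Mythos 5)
There is a genuine gap: the direction of your inequalities is wrong at the decisive step, and the argument as written would at best yield an \emph{upper} bound on $\lambda_1^{\mathcal W}(V+F)$ (i.e.\ the content of Proposition \ref{upper}), not the lower bound claimed in Proposition \ref{lower}.

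Concretely, testing the Feynman–Kac operator against $\1_D$ gives a \emph{lower} bound on the operator norm: $e^{-t\lambda_1^{\mathcal W}(V+F)}=\|P_t^{V+F}\|_{2,2}\ge\langle P_t^{V+F}\1_D,\1_D\rangle=\int_D\EE_x[\cdots]\,{\rm d}x$. Combined with \eqref{Ptcomparison}, which is the pointwise \emph{upper} bound $\EE_x[\cdots]\le 1-U_{V+F}^T(x)$, the two inequalities run in parallel directions and do not chain. Your claim ``$1-e^{-t\lambda_1^{\mathcal W}(V+F)}\le\int_D U_{V+F}^T(x)\,{\rm d}x$'' does not follow from either of them; for that you would need the \emph{lower} bound $\EE_x[\cdots]\ge 1-U_{V+F}^T(x)$, which is the opposite of \eqref{Ptcomparison}. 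Moreover, even if that inequality were true, combining it with $\int_D U_{V+F}\,{\rm d}x\le C(D)\,{\sf \Gamma}(V+F)$ gives $e^{-t\lambda_1^{\mathcal W}(V+F)}\ge 1-C(D)\,{\sf \Gamma}(V+F)$, hence $\lambda_1^{\mathcal W}(V+F)\le -t^{-1}\log(1-C(D)\,{\sf \Gamma}(V+F))$, an upper bound; the ``rearranging'' you state at the end flips a sign.

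To get a lower bound on $\lambda_1^{\mathcal W}$, what is needed is an \emph{upper} bound on $\|P_T^{V+F}\|_{2,2}$, and the correct tool is not the $\1_D$ pairing but the sup-norm estimate: for the symmetric sub-Markovian semigroup one has $\|P_T^{V+F}\|_{2,2}\le\|P_T^{V+F}\|_{\infty,\infty}=\sup_{x\in D}\EE_x[\cdots]\le\sup_{x\in D}(1-U_{V+F}^T(x))$. To make this useful you must then prove a uniform \emph{lower} bound $\inf_{x\in D}U_{V+F}^T(x)\ge c(D)\,{\sf \Gamma}(V+F)$, which is the real work of the proof: one shows via the Markov property and the two-sided Green kernel estimate \eqref{estRtilde} that $U_{V+F}-U_{V+F}^t\to 0$ uniformly on $D$ as $t\to\infty$, picks $T$ so that $U_{V+F}^T\ge U_{V+F}/2$ on $D$, and then uses the \emph{lower} Riesz-kernel bound in \eqref{estRtilde} to get $U_{V+F}(x)\ge C^{-1}({\rm diam}\,D)^{\alpha-d}\,{\sf \Gamma}(V+F)$ for $x\in D$. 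Your proposal uses only the upper bound from \eqref{UFdomi}, never a lower bound on $U_{V+F}$, so this essential piece is missing.
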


\begin{proof}
In view of \eqref{lambdaVN}, it is enough to show that there exists a constant $c(D) >0$ such that $c(D){\sf \Gamma}(V+F) \le \lambda_1^{\mathcal W}(V+F)$. To do this, we prove that there exist $T>0$ and a constant $c_1(D)>0$ such that
\begin{align}\label{Target}
\sup_{x \in D}\left(1-U_{V+F}^T(x)\right) \le e^{-c_1(D){\sf \Gamma}(V+F)}.
\end{align}
Then, we see from \eqref{Ptcomparison} that
\begin{align*}
\left\|e^{-T(V+d{\bf F}-{\mathcal W})}\right\|_{2,2} &\le \sup_{x \in D}\EE_{x}\left[e^{-\int_0^TV(W_s){\rm d}s-\sum_{s \le T}F(W_{s-}, W_s)}\right] \\
&\le \sup_{x \in D}\left(1-U_{V+F}^T\right) \le e^{-c_1(D){\sf \Gamma}(V+F)},
\end{align*}
which implies $c(D){\sf \Gamma}(V+F) \le \lambda_1^{\mathcal W}(V+F)$. Here $\|\cdot \|_{2,2}$ means the operator norm from $L^2(D)$ to $L^2(D)$. 
Now we shall prove \eqref{Target}. By the Markov property, for any $t, s >0$ 
\begin{align*}
&U_{V+F}^{t+s}(x)-U_{V+F}^t(x) \\
&\quad =\EE_x\left[e^{-\int_0^tV(X_u){\rm d}u-\sum_{0<u \le t}F(X_{u-},X_u)}\right] -\EE_x\left[e^{-\int_0^{t+s}V(X_u){\rm d}u-\sum_{0<u \le t+s}F(X_{u-},X_u)}\right] \\
&\quad =\EE_x\left[e^{-\int_0^tV(X_u){\rm d}u-\sum_{0<u \le t}F(X_{u-},X_u)}\left(1-\EE_{X_t}\left[e^{-\int_0^sV(X_u){\rm d}u-\sum_{0<u \le s}F(X_{u-},X_u)}\right]\right)\right] \\
&\quad =\EE_x\left[e^{-\int_0^tV(X_s){\rm d}s-\sum_{0<s \le t}F(X_{s-},X_s)}U_{V+F}^s(X_t)\right] \le \EE_x\left[U_{V+F}^s(X_t)\right].
\end{align*}
Letting $s \to \infty$, we have by \eqref{UtildeR} and \eqref{estRtilde} that
\begin{align}\label{uniformdiffer}
&U_{V+F}(x)-U_{V+F}^t(x) \le \EE_x\left[U_{V+F}(X_t)\right] \nonumber \\
& \quad \quad =\EE_x\left[\widetilde{R}\left(\EE_{\cdot}\left[e^{-\int_0^{\infty}V(X_t){\rm d}t - \sum_{t>0}F(X_{t-},X_t)}\right](V+ {\bf F}1)\right)(X_t)\right] \nonumber \\ 
& \quad \quad =\int_{\R^d}p_t(x,y)\widetilde{R}\left(\EE_{\cdot}\left[e^{-\int_0^{\infty}V(X_t){\rm d}t - \sum_{t>0}F(X_{t-},X_t)}\right](V+ {\bf F}1)\right)(y){\rm d}y \nonumber \\
& \quad \quad \le C\int_{\R^d}p_t(x,y)\int_{\R^d}\frac{\EE_{z}[e^{-\int_0^{\infty}V(X_t){\rm d}t - \sum_{t>0}F(X_{t-},X_t)}](V+ {\bf F}1)(z)}{|y-z|^{d-\alpha}}{\rm d}z{\rm d}y \nonumber \\
& \quad \quad \le C\left(\sup_{x \in D, z \in \R^d}\int_{\R^d}\frac{p_t(x,y)}{|y-z|^{d-\alpha}}{\rm d}y\right){\sf \Gamma}(V+F).
\end{align}
In view of \eqref{ptbound}, since
\begin{align*}
\int_{\R^d}\frac{p_t(x,y)}{|y-z|^{d-\alpha}}{\rm d}y &\le C\int_{\R^d}\int_0^\infty p_t(x,y)p_{\tau}(y,z){\rm d}\tau {\rm d}y \\
&=C\int_t^\infty p_{\tau}(x,z){\rm d}\tau \le C'\int_t^\infty \tau^{-d/\alpha }{\rm d}\tau =C't^{-d/\alpha+1} \longrightarrow 0, \quad \text{as}\quad \!\!\!t \to \infty,
\end{align*}
the left hand side of \eqref{uniformdiffer} converges to 0 as $t \to \infty$, locally uniformly in $x$. Thus we can take large enough $T>0$ so that $U_{V+F}-U_{V+F}^T \le U_{V+F}/2$. On the other hand, we have by \eqref{UtildeR} and \eqref{estRtilde} again
\begin{align*}
U_{V+F}(x)&=\widetilde{R}\left(\EE_{\cdot}\left[e^{-\int_0^{\infty}V(X_t){\rm d}t - \sum_{t>0}F(X_{t-},X_t)}\right](V+ {\bf F}1)\right)(x) \\ 
&\ge C^{-1}\int_{D}\frac{\EE_{y}[e^{-\int_0^{\infty}V(X_t){\rm d}t - \sum_{t>0}F(X_{t-},X_t)}](V+ {\bf F}1)(y)}{|x-y|^{d-\alpha}}{\rm d}y \\
&\ge C^{-1}\left({\rm diam}\, D\right)^{\alpha -d}{\sf \Gamma}(V+F), \quad x \in D.
\end{align*}
Hence we obtain
\begin{align*}
1-U_{V+F}^T(x) \le 1 - \frac{1}{2}U_{V+F}(x) \le 1- c(D){\sf \Gamma}(V+F) \le e^{-c_1(D){\sf \Gamma}(V+F)}
\end{align*}
uniformly in $x \in D$. The proof is complete. 
\end{proof}

%%%%%%%%%%%%%%%%%%%%%%%%%%%%%%%%%%%%%%%%%%%%%%%%%%%%%%%%%%%%%%%%%%%%%%%%%%%%%%%%%%%%%%%%%%%%%%%%%
\section{Equivalent criteria for discrete spectrum via scattering length}\label{sec:DiscreteSpec}  
%%%%%%%%%%%%%%%%%%%%%%%%%%%%%%%%%%%%%%%%%%%%%%%%%%%%%%%%%%%%%%%%%%%%%%%%%%%%%%%%%%%%%%%%%%%%%%%%%
In this section, we give an equivalent characterization for discreteness of the spectrum of the formal Schr${\rm \ddot{o}}$dinger operator 
\begin{align*}
{\mathcal L}_{V+F}:=(-\Delta)^{\alpha /2}+V+d{\bf F}
\end{align*}
in terms of the scattering length, by using the results obtained in the previous section.

Let $D_{r, \xi}$ be the $d$-dimensional cube of the form 
\begin{align*}
D_{r,\xi}=\left\{x \in \R^d : \left|\xi_j -x_j\right| \le \frac{r}{2},~j=1,2, \cdots, d \right\}, \quad \xi \in \R^d.
\end{align*}
In the sequel, we use the notation $(-\Delta)^{\alpha /2}_{N, r, \xi}$ for the Neumann fractional Laplacian on $L^2(D_{r,\xi})$, to emphasize the dependence of its side length $r$ and center $\xi$. Similarly to \eqref{Neigen}, we write $\lambda_1^{N,r,\xi}(V+F)$ for the bottom of the spectrum of $(-\Delta)^{\alpha /2}_{N,r,\xi}+V+d{\bf F}$. Let $V_{r,\xi}$ be a function supported on $D_{0,1}$ given by $V_{r,\xi}(x)=V(rx+\xi)$ and $F_{r,\xi}$ a non-negative function supported on $D_{1,0}\times D_{1,0}$ given by $F_{r,\xi}(x,y):=F(rx+\xi, ry+\xi)$. Then, by the definition, we can easily check that
\begin{align*}
\lambda_1^{N,r,\xi}(V+F)=r^{-\alpha}\lambda_1^{N,1,0}(r^{\alpha}V_{r,\xi}+F_{r,\xi}).
\end{align*}

First, we give the sufficient condition for the discreteness of the spectrum of ${\mathcal L}_{V+F}$ in terms of the scattering length relative to $V$ and $F$ restricted to cubes. By $\sigma_{\rm ess}(\H)$ we mean the essential spectrum set of a operator ${\mathcal H}$. Let ${\bf F}_{r,\xi}$ be the non-local linear operator defined in \eqref{dfnonlocaloper} for $F_{r,\xi}$.

\begin{prop}\label{suff}
Let $C_1(D_{1,0})$ be the positive constant as in Proposition \ref{lower}. Suppose that for given $c>0$ there exists $r:=r(c) \in (0,1]$ and $R:=R(c) >0$ such that
\begin{align}\label{suffGammacond}
C_1(D_{1,0}){\sf \Gamma}(r^{\alpha}V_{r,\xi}+F_{r,\xi}) \ge r^\alpha c \qquad \text{for} \hspace{0.2cm}|\xi| \ge R.
\end{align}
Then $\sigma ({\mathcal L}_{V+F}) =\sigma_{\rm d} ({\mathcal L}_{V+F})$. 
\end{prop}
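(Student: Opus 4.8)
The plan is to use the standard criterion that $\sigma(\mathcal{L}_{V+F})=\sigma_{\mathrm{d}}(\mathcal{L}_{V+F})$ is equivalent to $\sigma_{\mathrm{ess}}(\mathcal{L}_{V+F})=\emptyset$, which in turn, via the Persson-type characterization for the quadratic form, follows if the bottom of the spectrum of the operator restricted to the exterior of a large ball tends to $+\infty$. More precisely, I would show that
\begin{align*}
\lim_{R\to\infty}\inf\Big\{\E(\varphi,\varphi)+{\mathcal H}^{V+F}(\varphi,\varphi)\ :\ \varphi\in\F,\ \mathrm{supp}\,\varphi\subset B(0,R)^c,\ \|\varphi\|_{2}=1\Big\}=+\infty.
\end{align*}
Assuming this Persson-type lemma (which for stable processes and this class of non-local perturbations is routine, though I expect collecting its proof to be one technical point), the essential spectrum is empty and discreteness follows.

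The core of the argument is a localization onto a grid of unit cubes. First I would fix $c>0$ (to be sent to infinity at the end) and, using hypothesis \eqref{suffGammacond}, pick $r=r(c)\in(0,1]$ and $R=R(c)>0$ so that $C_1(D_{1,0}){\sf \Gamma}(r^\alpha V_{r,\xi}+F_{r,\xi})\ge r^\alpha c$ whenever $|\xi|\ge R$. Tile $B(0,R)^c$ by the cubes $D_{r,\xi}$ with $\xi$ ranging over $r\mathbb{Z}^d$ (or a suitable lattice), and for $\varphi$ supported in $B(0,R)^c$ write, discarding the off-diagonal (cross-cube) contributions of the Dirichlet form and of the jump kernel — all of which are nonnegative because $F\ge 0$ and $1-e^{-F}\ge 0$ — the bound
\begin{align*}
\E(\varphi,\varphi)+{\mathcal H}^{V+F}(\varphi,\varphi)\ \ge\ \sum_{\xi}\Big(\E^{\mathrm{ref}}_{r,\xi}(\varphi,\varphi)+{\mathcal H}^{V+F}_{D_{r,\xi}}(\varphi,\varphi)\Big)\ \ge\ \sum_{\xi}\lambda_1^{N,r,\xi}(V+F)\int_{D_{r,\xi}}\varphi^2.
\end{align*}
By the scaling identity $\lambda_1^{N,r,\xi}(V+F)=r^{-\alpha}\lambda_1^{N,1,0}(r^\alpha V_{r,\xi}+F_{r,\xi})$ together with Proposition \ref{lower} applied on the unit cube, $\lambda_1^{N,r,\xi}(V+F)\ge r^{-\alpha}C_1(D_{1,0}){\sf \Gamma}(r^\alpha V_{r,\xi}+F_{r,\xi})\ge c$ for every $\xi$ appearing in the tiling (since those all satisfy $|\xi|\ge R$). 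Hence the right-hand side is $\ge c\sum_\xi\int_{D_{r,\xi}}\varphi^2=c\|\varphi\|_2^2$. Letting $R(c)$ grow as $c\to\infty$, this gives exactly the Persson-type divergence above, so $\sigma_{\mathrm{ess}}(\mathcal{L}_{V+F})=\emptyset$.

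The main obstacle is the reduction from the global form $\E+{\mathcal H}^{V+F}$ restricted to functions supported in $B(0,R)^c$ to the sum of the \emph{reflected} (Neumann) cube forms: one must verify that dropping the inter-cube interactions is legitimate, i.e. that $\E(\varphi,\varphi)\ge\sum_\xi\E^{\mathrm{ref}}_{r,\xi}(\varphi,\varphi)$ and similarly for the jump part of ${\mathcal H}^{V+F}$. For the Dirichlet energy this is immediate from Fubini since the integrand $(\varphi(x)-\varphi(y))^2|x-y|^{-d-\alpha}$ is nonnegative and the cubes partition $B(0,R)^c$ up to a null set; for the non-local perturbation term one uses that $\varphi(x)\varphi(y)(1-e^{-F(x,y)})|x-y|^{-d-\alpha}$ restricted to $x,y$ in the same cube is dominated (in absolute value, after Cauchy–Schwarz) by what the reflected form controls — here some care is needed because $\varphi(x)\varphi(y)$ need not be sign-definite, so I would instead argue directly on $\|e^{-T(V+d\mathbf{F}-\mathcal{W})}\|_{2,2}$ via the probabilistic representation as in the proof of Proposition \ref{lower}, localizing the subordinate reflected process to each cube. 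A secondary point is to handle the boundary/corner cubes of the tiling that straddle $\partial B(0,R)$, which is dealt with by enlarging $R$ to a multiple of $r$ so that the tiling is clean, or by a routine covering argument. Once these localization inequalities are in place, the estimate above closes the proof.
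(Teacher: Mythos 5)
Your proposal follows the same route as the paper: use Proposition~\ref{lower} and the scaling identity $\lambda_1^{N,r,\xi}(V+F)=r^{-\alpha}\lambda_1^{N,1,0}(r^\alpha V_{r,\xi}+F_{r,\xi})$ to deduce from~\eqref{suffGammacond} that $\sigma\bigl((-\Delta)^{\alpha/2}_{N,r,\xi}+V+d\mathbf{F}\bigr)\subset[c,\infty)$ for $|\xi|\ge R$, and then localize by Neumann bracketing on a cube tiling to conclude $\sigma_{\rm ess}(\mathcal{L}_{V+F})\subset[c,\infty)$. The paper compresses the second half into one sentence (``a standard argument involving Rellich's theorem''), so what you are really doing is unpacking that step. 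Where your write-up has a genuine soft spot is exactly the one you flag yourself: the Neumann bracketing $\E(\varphi,\varphi)+{\mathcal H}^{V+F}(\varphi,\varphi)\ge\sum_\xi\bigl(\E^{\rm ref}_{r,\xi}(\varphi,\varphi)+{\mathcal H}^{V+F}_{D_{r,\xi}}(\varphi,\varphi)\bigr)$ does \emph{not} follow by discarding cross-cube terms separately in $\E$ and in the jump part of ${\mathcal H}^{V+F}$, precisely because $\varphi(x)\varphi(y)$ is not sign-definite. The detour you propose through $\|e^{-T(V+d\mathbf{F}-\mathcal{W})}\|_{2,2}$ is more roundabout than necessary. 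The clean fix is purely algebraic: pointwise in $(x,y)$,
\begin{align*}
\tfrac12\bigl(\varphi(x)-\varphi(y)\bigr)^2+\varphi(x)\varphi(y)\bigl(1-e^{-F(x,y)}\bigr)
=\tfrac12\bigl(\varphi(x)-\varphi(y)\bigr)^2 e^{-F(x,y)}+\tfrac12\bigl(\varphi(x)^2+\varphi(y)^2\bigr)\bigl(1-e^{-F(x,y)}\bigr)\ge 0,
\end{align*}
so the \emph{combined} integrand of $\E+{\mathcal H}^{V+F}$ (jump parts grouped together) is nonnegative, and dropping the off-diagonal cube blocks is legitimate for the sum even though it fails for the perturbation alone; the remaining diagonal blocks are exactly $\E^{\rm ref}_{r,\xi}+{\mathcal H}^{V+F}_{D_{r,\xi}}$. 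With that identity in hand your tiling estimate closes cleanly and gives the Persson-type divergence, hence the claim. One minor bookkeeping point worth noting: after tiling you should take $R$ large enough (e.g.\ a multiple of $r$ plus a collar) that every cube $D_{r,\xi}$ meeting the support of $\varphi$ actually has $|\xi|\ge R(c)$, as you mention; that is routine. So: same approach as the paper, with the Rellich step made explicit, and the only substantive correction is that the bracketing should be carried out on the combined form via the identity above rather than on the perturbation term in isolation.
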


\begin{proof}
Let us denote by $\lambda_{\rm ess}(V+F)$ the bottom of the set $\sigma_{\rm ess} ({\mathcal L}_{V+F})$. To end the proof, we will show that $\lambda_{\rm ess}(V+F)=\infty$ under \eqref{suffGammacond}. It follows from  
\eqref{suffGammacond} and Proposition \ref{lower} that there exists a constant $C_1(D_{1,0}) >0$ such that 
\begin{align*}
r^{-\alpha}\lambda_1^{N,1,0}(r^{\alpha}V_{r,\xi}+F_{r,\xi}) \ge r^{-\alpha}C_1(D_{1,0}){\sf \Gamma}(r^{\alpha}V_{r,\xi}+F_{r,\xi}) \ge c \quad \text{for} \hspace{0.2cm}|\xi| \ge R
\end{align*} 
and which yields that
\begin{align}\label{sigmaGamma}
\sigma \left(r^{-\alpha}\left((-\Delta)^{\alpha/2}_{N,1,0}+r^{\alpha}V_{r,\xi}+d{\bf F}_{r,\xi}\right)\right) \subset  [c, \infty)\quad \text{for} \hspace{0.2cm}|\xi| \ge R.
\end{align}
Note that the operators $(-\Delta)^{\alpha/2}_{N,r,\xi}+V+d{\bf F}$ and $r^{-\alpha}\left((-\Delta)^{\alpha/2}_{N,1,0}+r^{\alpha}V_{r,\xi}+d{\bf F}_{r,\xi}\right)$ are unitarily equivalent. Therefore, \eqref{sigmaGamma} is equivalent to 
\begin{align}\label{unitary}
\sigma \left((-\Delta)^{\alpha/2}_{N, r, \xi}+V+d{\bf F}\right) \subset [c, \infty) \quad \text{for} \hspace{0.2cm}|\xi| \ge R.
\end{align}
By a standard argument involving Rellich's theorem, \eqref{unitary} implies that $\sigma_{\rm ess} ({\mathcal L}_{V+F}) \subset [c,\infty)$, that is, $\lambda_{\rm ess}(V+F) \ge c$. Since $c$ is arbitrary, we have the assertion by letting $c \to \infty$.
\end{proof}

Now we turn to the necessary condition. Let $C_0^\infty(\R^d)$ be the set of all $C^\infty$ functions with compact support on $\R^d$. Set
\begin{align*}
M:=\left\{f \in C_0^\infty (\R^d) : {\mathcal E}(f,f) + {\mathcal H}^{V+F}(f,f) \le 1\right\},
\end{align*}
where ${\mathcal H}^{V+F}(f,f)$ is the bilinear form defined as in \eqref{pertubform} with $D$ replaced by $\R^d$. In a similar way of \cite[Lemma 2.2]{KS}, we see that $\sigma ({\mathcal L}_{V+F}) =\sigma_{\rm d} ({\mathcal L}_{V+F})$ if and only if $M$ is precompact in $L^2(\R^d)$. 

For the Schr${\rm \ddot{o}}$dinger operator $(-\Delta)^{\alpha /2}_{D,r,\xi}+V+d{\bf F}$, where $(-\Delta)^{\alpha /2}_{D, r, \xi}$ is the  fractional Laplacian on $L^2(D_{r,\xi})$ with the Dirichlet boundary condition, we denote by $\lambda_1^{D,r,\xi}(V+F)$ the bottom of its spectrum. 

\begin{lem}\label{precpt}
If $\sigma ({\mathcal L}_{V+F}) =\sigma_{\rm d} ({\mathcal L}_{V+F})$, then for each $r \in (0,1]$, $\lambda_1^{D,r,\xi}(V+F) \to \infty$ as $|\xi| \to \infty$.
\end{lem}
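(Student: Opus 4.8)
The plan is to argue by contradiction, exploiting the characterization recalled just before the lemma: $\sigma({\mathcal L}_{V+F})=\sigma_{\rm d}({\mathcal L}_{V+F})$ holds if and only if the set $M$ of $C_0^\infty(\R^d)$ functions $f$ with ${\mathcal E}(f,f)+{\mathcal H}^{V+F}(f,f)\le 1$ is precompact in $L^2(\R^d)$. Suppose, for contradiction, that for some fixed $r\in(0,1]$ the quantity $\lambda_1^{D,r,\xi}(V+F)$ does \emph{not} tend to $\infty$ as $|\xi|\to\infty$. Then there is a constant $\Lambda<\infty$ and a sequence $\xi_n$ with $|\xi_n|\to\infty$ along which $\lambda_1^{D,r,\xi_n}(V+F)\le \Lambda$. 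By definition of $\lambda_1^{D,r,\xi_n}(V+F)$ as the bottom of the Dirichlet spectrum of $(-\Delta)^{\alpha/2}_{D,r,\xi_n}+V+d{\bf F}$ on $L^2(D_{r,\xi_n})$, for each $n$ I can pick a near-minimizer $\varphi_n$ supported in $D_{r,\xi_n}$, normalized by $\|\varphi_n\|_{L^2}=1$, with
\begin{align*}
{\mathcal E}(\varphi_n,\varphi_n)+{\mathcal H}^{V+F}(\varphi_n,\varphi_n)\le \Lambda+1.
\end{align*}
Since $C_0^\infty$ functions supported in $D_{r,\xi_n}$ are a form core for the Dirichlet form on $L^2(D_{r,\xi_n})$, and since ${\mathcal H}^{V+F}$ extends to such functions, I may arrange $\varphi_n\in C_0^\infty(\R^d)$; rescaling by the constant $(\Lambda+1)^{-1/2}$ (or leaving it, since $M$ being precompact is equivalent to the precompactness of any fixed dilate of it) puts $(\Lambda+1)^{-1/2}\varphi_n$ into $M$.

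The key point is then that $\{\varphi_n\}$ cannot have a convergent subsequence in $L^2(\R^d)$: the supports $D_{r,\xi_n}$ are cubes of fixed side length $r$ whose centers $\xi_n$ run off to infinity, so after passing to a subsequence they are pairwise disjoint, hence $\|\varphi_n-\varphi_m\|_{L^2}^2=\|\varphi_n\|_{L^2}^2+\|\varphi_m\|_{L^2}^2=2$ for $n\ne m$. Thus $\{(\Lambda+1)^{-1/2}\varphi_n\}$ is a sequence in $M$ with no $L^2$-convergent subsequence, so $M$ is not precompact, contradicting $\sigma({\mathcal L}_{V+F})=\sigma_{\rm d}({\mathcal L}_{V+F})$. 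This contradiction proves that $\lambda_1^{D,r,\xi}(V+F)\to\infty$ as $|\xi|\to\infty$ for each $r\in(0,1]$.

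I expect the main obstacle to be the bookkeeping that links $\lambda_1^{D,r,\xi}(V+F)$, defined via the Dirichlet form on the cube $D_{r,\xi}$ with the perturbation term ${\mathcal H}^{V+F}$ restricted there, to the global form ${\mathcal E}+{\mathcal H}^{V+F}$ used in the definition of $M$. Concretely I must check that extending $\varphi_n$ by zero outside $D_{r,\xi_n}$ does not increase the relevant energy beyond a controlled amount: for the local pieces this is immediate, but for the nonlocal Dirichlet form ${\mathcal E}$ and the nonlocal part of ${\mathcal H}^{V+F}$ one should use that the full-space Dirichlet energy of the zero-extension of a function on $D_{r,\xi}$ equals the Dirichlet-form energy on $D_{r,\xi}$ plus the ``killing'' contribution $\int_{D_{r,\xi}}\varphi_n(x)^2\,\kappa_{r,\xi}(x)\,{\rm d}x$ with $\kappa_{r,\xi}(x)=C_{d,\alpha}\int_{D_{r,\xi}^c}|x-y|^{-d-\alpha}{\rm d}y$; this is precisely the relation between the part Dirichlet form $(\E^D,\F^D)$ and the ambient form, and it shows that the Dirichlet-eigenfunction on the cube, extended by zero, is a legitimate trial function for the global form with energy bounded by $\lambda_1^{D,r,\xi}(V+F)$ up to the normalization. (One may instead quote the standard fact that $\lambda_1^{D,r,\xi}(V+F)$ dominates the infimum of the global Rayleigh quotient over functions supported in $D_{r,\xi}$, which is what is actually needed.) Once this identification is in place, the disjoint-supports argument above closes the proof with no further analysis.
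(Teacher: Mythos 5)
Your proof is correct and rests on the same essential fact the paper invokes: $\sigma({\mathcal L}_{V+F})=\sigma_{\rm d}({\mathcal L}_{V+F})$ is equivalent to precompactness of $M$ in $L^2(\R^d)$. The paper then argues directly: it decomposes precompactness of $M$ into Rellich compactness of the restrictions to a fixed ball plus the uniform tail-decay condition $\int_{B(x_0,R)^c}f^2\le\varepsilon$ for all $f\in M$, and observes that the tail bound immediately forces the Rayleigh quotient on cubes $D_{r,\xi}\subset B(x_0,R)^c$ to exceed $1/\varepsilon$. You instead argue by contradiction, manufacturing a sequence of unit-normalized near-minimizers supported in pairwise disjoint cubes $D_{r,\xi_n}$ that has no $L^2$-Cauchy subsequence, which negates precompactness directly without invoking the Rellich/tail decomposition. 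These are two ways of exercising the same precompactness criterion: your non-Cauchy sequence is exactly a witness to the failure of the paper's tail-decay condition, and vice versa. The point you are careful to flag and resolve in your second paragraph---that for $\varphi$ supported in $D_{r,\xi}$ the zero extension satisfies $\mathcal{E}(\varphi,\varphi)+\mathcal{H}^{V+F}(\varphi,\varphi)=\mathcal{E}_{D_{r,\xi}}(\varphi,\varphi)+\mathcal{H}^{V+F}_{D_{r,\xi}}(\varphi,\varphi)$, since the cross terms in the nonlocal form become the killing term of the part form and the nonlocal perturbation is supported on pairs with both arguments in $D_{r,\xi}$---is the same identification the paper relies on implicitly, and you handle it correctly. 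Net assessment: valid proof, same underlying mechanism, with a contradiction framing that is slightly more self-contained (it avoids citing the Rellich-plus-tails characterization) but otherwise equivalent.
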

\vskip 0.2cm
\begin{proof}~~The proof is similar to that of \cite[Lemma 2.3 and Proposition 2.4]{KS}. We address here the proof for reader's convenience. For a fixed $x_0 \in \R^d$ and $R>0$, the compact embedding theorem says that $M_B=\{f |_{B(x_0,R)} \in L^2(B(x_0,R)) : f \in M\}$ is precompact in $L^2(B(x_0,R))$. From this fact, we see that the precompactness of $M$ is equivalent to the precompactness of $M_B$ with the condition: for any $\varepsilon >0$ there exists $R:=R(\varepsilon) >0$ such that
\begin{align}\label{cptvanish}
\int_{B(x_0,R)^c}f(x)^2{\rm d}x \le \varepsilon, \quad \text{for any}\hspace{0.2cm} f \in M.
\end{align} 
Now, choose a sufficiently small $\varepsilon >0$ and assume that $D_{r,\xi} \subset B(x_0,R)^c$ where $R=R(\varepsilon)$ corresponds to $\varepsilon$ according to \eqref{cptvanish}. Then for any $f \in C_0^\infty (D_{r,\xi})$ with ${\mathcal E}_{D_{r,\xi}}(f,f) + {\mathcal H}^{V+F}_{D_{r,\xi}}(f,f) \le 1$ we have $\int_{D_{r,\xi}}f(x)^2{\rm d}x \le \varepsilon$. Therefore
\begin{align*}
\frac{{\mathcal E}_{D_{r,\xi}}(f,f) + {\mathcal H}^{V+F}_{D_{r,\xi}}(f,f)}{\int_{D_{r,\xi}}f(x)^2{\rm d}x} \ge \frac{1}{\varepsilon}.
\end{align*}
This implies that $\lambda_1^{D,r,\xi}(V+F) \to \infty$ as $|\xi| \to \infty$.
\end{proof}

\begin{lem}\label{asympN}
If for each $r \in (0,1]$ $\lambda_1^{D,r,\xi}(V+F) \to \infty$ as $|\xi| \to \infty$, then so does $\lambda_1^{N,r,\xi}(V+F)$.
\end{lem}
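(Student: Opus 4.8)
The plan is to establish a coercivity bound of the form $\lambda_1^{N,r,\xi}(V+F)\ge c(d,\alpha)\,\lambda_1^{D,r,\xi}(V+F)$ for all $|\xi|$ large, which together with the hypothesis gives the claim. The starting point is the algebraic identity, valid for any $\varphi$ in the reflected form domain $\F^{\rm ref}_a$ on $D_{r,\xi}$,
\begin{align*}
\E^{\rm ref}_{D_{r,\xi}}(\varphi,\varphi)+\mathcal H^{V+F}_{D_{r,\xi}}(\varphi,\varphi)=\frac{C_{d,\alpha}}{2}\iint_{D_{r,\xi}\times D_{r,\xi}}\frac{e^{-F(x,y)}\,(\varphi(x)-\varphi(y))^2}{|x-y|^{d+\alpha}}\,{\rm d}x\,{\rm d}y+\int_{D_{r,\xi}}\varphi(x)^2\bigl(V(x)+b(x)\bigr)\,{\rm d}x,
\end{align*}
where $b(x):=C_{d,\alpha}\int_{D_{r,\xi}}(1-e^{-F(x,y)})|x-y|^{-d-\alpha}{\rm d}y\ge 0$; in particular the left side is $\ge e^{-\|F\|_\infty}\E^{\rm ref}_{D_{r,\xi}}(\varphi,\varphi)+\int_{D_{r,\xi}}\varphi^2V$ and is nonnegative, facts I would use repeatedly.

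Next I would localize with a single cutoff. Fix $\varepsilon\in(0,1)$ (to be chosen depending on $\xi$) and $\chi\in C_0^\infty(\R^d)$ with $\chi\equiv 1$ on $D_{(1-4\varepsilon)r,\xi}$, $\mathrm{supp}\,\chi\subset D_{(1-2\varepsilon)r,\xi}$, and $\|\nabla\chi\|_\infty\lesssim(\varepsilon r)^{-1}$. Since $\chi\varphi$ (extended by zero) lies in the form domain of $(-\Delta)^{\alpha/2}_{D,r,\xi}+V+d{\bf F}$ and is supported at distance $\ge\varepsilon r$ from $\partial D_{r,\xi}$, writing the whole-space energy as $\E(\chi\varphi,\chi\varphi)=\E^{\rm ref}_{D_{r,\xi}}(\chi\varphi,\chi\varphi)+\int(\chi\varphi)^2\kappa$ with the jump-killing density $\kappa(x):=C_{d,\alpha}\int_{\R^d\setminus D_{r,\xi}}|x-y|^{-d-\alpha}{\rm d}y\le C(d,\alpha)(\varepsilon r)^{-\alpha}$ on $\mathrm{supp}\,\chi$, I obtain $\E^{\rm ref}_{D_{r,\xi}}(\chi\varphi,\chi\varphi)+\mathcal H^{V+F}_{D_{r,\xi}}(\chi\varphi,\chi\varphi)\ge\bigl(\lambda_1^{D,r,\xi}(V+F)-C(\varepsilon r)^{-\alpha}\bigr)\|\chi\varphi\|_{L^2}^2$. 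On the other hand a standard Leibniz/commutator estimate for the form (together with $1-e^{-F}\le 1$ and the identity above) gives $\E^{\rm ref}_{D_{r,\xi}}(\chi\varphi,\chi\varphi)+\mathcal H^{V+F}_{D_{r,\xi}}(\chi\varphi,\chi\varphi)\le 2\bigl(\E^{\rm ref}_{D_{r,\xi}}(\varphi,\varphi)+\mathcal H^{V+F}_{D_{r,\xi}}(\varphi,\varphi)\bigr)+C'(\varepsilon r)^{-\alpha}\|\varphi\|_{L^2}^2$. Combining these two, and using $\|\chi\varphi\|_{L^2}^2\ge\int_{D_{(1-4\varepsilon)r,\xi}}\varphi^2$, bounds the Neumann form below by a multiple of $\lambda_1^{D,r,\xi}(V+F)$ times the mass of $\varphi$ in the inner cube, at the cost of an error of order $(\varepsilon r)^{-\alpha}$.

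The mass of $\varphi$ in the remaining boundary layer $D_{r,\xi}\setminus D_{(1-4\varepsilon)r,\xi}$ I would control using the coercivity already built into the reflected form: for $x$ within $2\varepsilon r$ of $\partial D_{r,\xi}$ and $y$ in the slightly smaller inner cube $D_{(1-6\varepsilon)r,\xi}$ one has $|x-y|\ge\varepsilon r$, so estimating the jump kernel from above and below and applying $(a-b)^2\ge\tfrac12a^2-b^2$ yields a Hardy-type bound $\E^{\rm ref}_{D_{r,\xi}}(\varphi,\varphi)\ge c_1(\varepsilon r)^{-\alpha}\!\int_{D_{r,\xi}\setminus D_{(1-4\varepsilon)r,\xi}}\!\varphi^2-C_1(\varepsilon r)^{-\alpha}\!\int_{D_{(1-4\varepsilon)r,\xi}}\!\varphi^2$. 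Feeding this and the previous display into one another (a two-term estimate in the inner-cube mass and the layer mass, which together make up $\|\varphi\|_{L^2}^2$), and then choosing $\varepsilon=\varepsilon(\xi)$ so that $C(d,\alpha)(\varepsilon(\xi)r)^{-\alpha}=\tfrac12\lambda_1^{D,r,\xi}(V+F)$ — legitimate precisely because, by hypothesis, $\lambda_1^{D,r,\xi}(V+F)\to\infty$, forcing $\varepsilon(\xi)\to 0$ — I expect to arrive, after absorbing the $\|\varphi\|_{L^2}^2$ terms, at $\E^{\rm ref}_{D_{r,\xi}}(\varphi,\varphi)+\mathcal H^{V+F}_{D_{r,\xi}}(\varphi,\varphi)\ge c(d,\alpha)\,\lambda_1^{D,r,\xi}(V+F)\,\|\varphi\|_{L^2}^2$ for all large $|\xi|$, i.e. $\lambda_1^{N,r,\xi}(V+F)\ge c(d,\alpha)\,\lambda_1^{D,r,\xi}(V+F)\to\infty$.

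The step I expect to be the main obstacle is exactly this control of the boundary layer and the bookkeeping of the constants in it. Unlike the local situation of \cite{KS} (see also \cite{MazyaShubin}), where a function supported in a cube has identical Dirichlet and Neumann energies, in the nonlocal setting passing between the reflected form on $D_{r,\xi}$ and the Dirichlet form on $D_{r,\xi}$ (or on a subcube) always costs the killing density $\kappa(x)\asymp\mathrm{dist}(x,\partial D_{r,\xi})^{-\alpha}$, which blows up at the boundary. One must therefore (i) take the transition region of $\chi$ slow and pushed inward, so that the commutator error constant $C'$ is small compared with the killing constant $C$ on $\mathrm{supp}\,\chi$, and (ii) let the layer thickness $\varepsilon(\xi)r$ shrink at the rate dictated by $\lambda_1^{D,r,\xi}(V+F)$, so that the $(\varepsilon r)^{-\alpha}$-errors in the interior comparison and in the Hardy bound do not swallow the gain; making these two adjustments fit together is where the real work lies.
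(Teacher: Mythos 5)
Your plan is a genuine alternative to the paper's argument. The paper proceeds probabilistically: it sets $\lambda=\lambda_1^{D,1,0}(r^\alpha V_{r,\xi}+F_{r,\xi})$, $\tau=\lambda^{-a}$, derives an ultracontractive bound $\|e^{-\tau(r^\alpha V_{r,\xi}+d{\bf F}_{r,\xi}+(-\Delta)^{\alpha/2}_{D,1,0})}\|_{2,\infty}\lesssim\tau^{-d/\alpha}$ and combines it with $\|\cdot\|_{2,2}\le e^{-\tau\lambda}$ to get $p_{3\tau}^{D,\dots}\lesssim\lambda^{ad/\alpha}e^{-\lambda^{1-a}}$; it then controls the caloric difference $q_t=p_t^{N,\dots}-p_t^{D,\dots}$ by a maximum principle (its boundary data $p_t^{N,1,0}(x,y)\le Ct/|x-y|^{d+\alpha}$ is small when $y$ sits in the inner region $D_{1,0}^{(\tau)}$), and finally handles the boundary shell $D_{1,0}\setminus D_{1,0}^{(\tau)}$ by its small Lebesgue measure together with $p_1^{N,1,0}\le C$. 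That last point is the crucial contrast with your variational route: the paper never needs to know where the ground state's $L^2$-mass lives near $\partial D_{r,\xi}$, only that the heat kernel is uniformly bounded and the shell is thin.

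Your argument, by contrast, must control $\int_{L}\varphi^2$ over the boundary layer $L$ for the $L^2$-normalized Neumann ground state $\varphi$, and this is where there is a genuine gap. The Hardy-type bound you posit,
\begin{align*}
\E^{\rm ref}_{D_{r,\xi}}(\varphi,\varphi)\ge c_1(\varepsilon r)^{-\alpha}\int_{L}\varphi^2-C_1(\varepsilon r)^{-\alpha}\int_{I}\varphi^2,
\end{align*}
is not established and the ratio $c_1/C_1$ is the whole game. If one derives it by the $(a-b)^2\ge\tfrac12 a^2-b^2$ trick over $L\times I$, the positive coefficient is governed by $\inf_{x\in L}\int_I|x-y|^{-d-\alpha}{\rm d}y$ (controlled by the largest gap, of order $(K\varepsilon r)$ if the transition width is $K\varepsilon r$), while the negative coefficient is governed by $\sup_{y\in I}\int_L|x-y|^{-d-\alpha}{\rm d}x$ (controlled by the smallest gap, of order $\varepsilon r$); so $c_1/C_1\sim K^{-\alpha}$, which \emph{degrades} as $K$ grows. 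But you need $K$ large to make the commutator constant $C'$ small relative to the killing constant $C$ in the inner-cube comparison. The two requirements pull in opposite directions, and the combination step you describe only as ``feeding this and the previous display into one another ... I expect to arrive'' is exactly where this tension has to be resolved; as written it is not resolved, and you flag it yourself as the main obstacle. Underlying the difficulty is that the reflected form has no boundary coercivity — constants are annihilated — so unlike in the local setting of \cite{KS,MazyaShubin} there is no free way to transfer energy bounds across a thin shell, and a careful multi-layer or Whitney-type iteration (with genuine constant tracking) would likely be needed before the claimed bound $\lambda_1^{N,r,\xi}(V+F)\ge c(d,\alpha)\lambda_1^{D,r,\xi}(V+F)$ can be asserted. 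Until that is carried out, the proposal is an interesting program rather than a proof.
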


\begin{proof}
Let $\lambda:=\lambda_1^{D,1,0}(r^{\alpha}V_{r,\xi}+F_{r,\xi})$. For a fixed $a \in (0,1)$, set $\tau:=\lambda^{-a}$. Since the transition density $p_t(x,y)$ of $(-\Delta)^{\alpha /2}$ satisfies \eqref{ptbound} for $x,y \in D_{1,0}$ and $t>0$, we have for some $C>0$
\begin{align*}
\left\|e^{-\tau \left(r^{\alpha}V_{r,\xi}+d{\bf F}_{r,\xi}+(-\Delta)^{\alpha /2}_{D,1,0}\right)}\right\|_{1,2} \le C\tau^{-d/\alpha}, \quad  \left\|e^{-\tau \left(r^{\alpha}V_{r,\xi}+d{\bf F}_{r,\xi}+(-\Delta)^{\alpha /2}_{D,1,0}\right)}\right\|_{2,\infty} \le C\tau^{-d/\alpha},
\end{align*}
while the definition of $\lambda$ gives
\begin{align*}
\left\|e^{-\tau \left(r^{\alpha}V_{r,\xi}+d{\bf F}_{r,\xi}+(-\Delta)^{\alpha /2}_{D,1,0}\right)}\right\|_{2,2} \le e^{-\tau \lambda}.
\end{align*}
Here $\|\cdot \|_{p,q}$ means the operator norm from $L^p(D_{1,0})$ to $L^q(D_{1,0})$ for $1 \le p,q \le \infty$. Then
\begin{align*}
&\left\|e^{-3\tau \left(r^{\alpha}V_{r,\xi}+d{\bf F}_{r,\xi}+(-\Delta)^{\alpha /2}_{D,1,0}\right)}1\right\|_{\infty} \\
&\qquad =\left\|e^{-2\tau \left(r^{\alpha}V_{r,\xi}+d{\bf F}_{r,\xi}+(-\Delta)^{\alpha /2}_{D,1,0}\right)}e^{-\tau \left(r^{\alpha}V_{r,\xi}+d{\bf F}_{r,\xi}+(-\Delta)^{\alpha /2}_{D,1,0}\right)}1\right\|_{\infty} \\
&\qquad \le \left\|e^{-2\tau \left(r^{\alpha}V_{r,\xi}+d{\bf F}_{r,\xi}+(-\Delta)^{\alpha /2}_{D,1,0}\right)}\right\|_{2,\infty}\left\|e^{-\tau \left(r^{\alpha}V_{r,\xi}+d{\bf F}_{r,\xi}+(-\Delta)^{\alpha /2}_{D,1,0}\right)}\right\|_{2,2} \\
&\qquad \le C\tau^{-d/\alpha}e^{-\tau \lambda}=C\lambda^{ad/\alpha}e^{-\lambda^{1-a}},
\end{align*}
that is, the transition density $p_t^{D,\, r^{\alpha}V_{r,\xi}+F_{r,\xi}}(x,y)$ of $(-\Delta)^{\alpha/2}_{D,1,0}+r^{\alpha}V_{r,\xi}+d{\bf F}_{r,\xi}$ satisfies
\begin{align*}
0 \le p_{3\tau}^{D,\,r^{\alpha}V_{r,\xi}+F_{r,\xi}}(x,y) \le C\lambda^{ad/\alpha}e^{-\lambda^{1-a}}, \quad \text{for}\hspace{0.2cm}x,y \in D_{1,0}.
\end{align*}
By $p_t^{N,\,r^{\alpha}V_{r,\xi}+F_{r,\xi}}(x,y)$, we denote the transition density of $(-\Delta)^{\alpha/2}_{N,1,0}+r^{\alpha}V_{r,\xi}+d{\bf F}_{r,\xi}$. Put 
\begin{align*}
q_t^{r^{\alpha}V_{r,\xi}+F_{r,\xi}}(x,y):=p_t^{N,\,r^{\alpha}V_{r,\xi}+F_{r,\xi}}(x,y)-p_t^{D,\,r^{\alpha}V_{r,\xi}+F_{r,\xi}}(x,y), \quad \text{for} ~~t \in (0,3\tau], ~x,y \in D_{1,0}. 
\end{align*}
Then $q_t^{r^{\alpha}V_{r,\xi}+F_{r,\xi}}(x,y)$ satisfies the following fractional heat equation:
\begin{align}\label{fractionalHE}
\left\{
\begin{array}{ll}\displaystyle{
\left(\partial_t  -(-\Delta)^{\alpha/2} -r^{\alpha}V_{r,\xi}-d{\bf F}_{r,\xi}\right)q_t^{r^{\alpha}V_{r,\xi}+F_{r,\xi}}(\cdot, y)=0 \quad \text{on}\hspace{0.2cm}(0,\infty)\times D_{1,0} }\\
~~\displaystyle{q_0^{r^{\alpha}V_{r,\xi}+F_{r,\xi}}(x,y)=0, \quad q_t^{r^{\alpha}V_{r,\xi}+F_{r,\xi}}(x,y)=p_t^{N,\,r^{\alpha}V_{r,\xi}+F_{r,\xi}}(x,y)~~~{\rm on}~~x \in \partial D_{1,0}. }
\end{array}
\right.
\end{align} 
Therefore 
\begin{align}\label{estN10}
0 \le q_t^{r^{\alpha}V_{r,\xi}+F_{r,\xi}}(x,y) \le p_t^{N,1,0}(x,y).\quad x \in \partial D_{1,0}, 
\end{align}
where $p_t^{N,1,0}(x,y)$ is the transition density of $(-\Delta)^{\alpha/2}_{N,1,0}$. Set
\begin{align*}
D_{1,0}^{(\tau)}:=\left\{y \in D_{1,0} : {\rm dist} (y, \partial D_{1,0}) \ge \tau^{1/(d+\alpha +1)}\right\}. 
\end{align*}
Then we see that 
\begin{align}\label{estimateN10}
p_t^{N,1,0}(x,y) \le \frac{Ct}{|x-y|^{d+\alpha}} \le  C\tau^{1-\frac{d+\alpha}{d+\alpha +1}}, \quad x \in \partial D_{1,0}, y \in D_{1,0}^{(\tau)},~t \in (0,3\tau].
\end{align}
So applying the maximum principle for the fractional Laplacian to \eqref{fractionalHE}, together with \eqref{estN10} and \eqref   {estimateN10}, gives
\begin{align*}
q_t^{r^{\alpha}V_{r,\xi}+F_{r,\xi}}(x,y) \le  C\tau^{1-\frac{d+\alpha}{d+\alpha +1}}, \quad x \in D_{1,0}, y \in D_{1,0}^{(\tau)},~t \in (0,3\tau],
\end{align*}
and hence, for sufficiently large $\lambda$, we have
\begin{align}\label{3tau}
0 \le p_{3\tau}^{N,\,r^{\alpha}V_{r,\xi}+F_{r,\xi}}(x,y)&=q_{3\tau}^{r^{\alpha}V_{r,\xi}+F_{r,\xi}}(x,y) + p_{3\tau}^{D,\,r^{\alpha}V_{r,\xi}+F_{r,\xi}}(x,y) \nonumber \\ 
&\le C\tau^{1-\frac{d+\alpha}{d+\alpha +1}} + C\lambda^{ad/\alpha}e^{-\lambda^{1-a}} \nonumber \\
&=\lambda^{-a\left(1-\frac{d+\alpha}{d+\alpha +1}\right)} + C\lambda^{ad/\alpha}e^{-\lambda^{1-a}} \nonumber \\
&\le C'\lambda^{ad/\alpha}e^{-\lambda^{1-a}},~~ x \in D_{1,0}, y \in D_{1,0}^{(\tau)}.
\end{align}
By using the semigroup property of $e^{-t(r^{\alpha}V_{r,\xi}+d{\bf F}_{r,\xi}+(-\Delta)^{\alpha/2}_{N,1,0})}$, the estimate \eqref{3tau} can be extended for any $t \in [3\tau, \infty)$, 
\begin{align}\label{beyond3tau}
0 \le p_t^{N,\,r^{\alpha}V_{r,\xi}+F_{r,\xi}}(x,y) \le C'\lambda^{ad/\alpha}e^{-\lambda^{1-a}}, \quad  x \in D_{1,0}, y \in D_{1,0}^{(\tau)}, ~t \in [3\tau, \infty).
\end{align}
In particular, if $\lambda$ is large enough that $3\tau = 3\lambda^{-a} <1$, then the estimate \eqref{beyond3tau} holds for $p_1^{N,\,r^{\alpha}V_{r,\xi}+F_{r,\xi}}(x,y)$. On the other hand, since
\begin{align*}
p_1^{N,\,r^{\alpha}V_{r,\xi}+F_{r,\xi}}(x,y) \le p_1^{N,1,0}(x,y) \le C, \quad x \in D_{1,0}, ~y \in D_{1,0} \setminus D_{1,0}^{(\tau)},
\end{align*}
it follows that
\begin{align*}
&\int_{D_{1,0}}p_1^{N,\,r^{\alpha}V_{r,\xi}+F_{r,\xi}}(x,y){\rm d}y \\
& \qquad = \int_{D_{1,0}^{(\tau)}}p_1^{N,\,r^{\alpha}V_{r,\xi}+F_{r,\xi}}(x,y){\rm d}y + \int_{D_{1,0}\setminus D_{1,0}^{(\tau)}}p_1^{N,\,r^{\alpha}V_{r,\xi}+F_{r,\xi}}(x,y){\rm d}y \\
&\qquad \le C\lambda^{ad/\alpha}e^{-\lambda^{1-a}} + C\lambda^{-a/(d+\alpha +1)}.
\end{align*}
We also have a similar bound for $\int_{D_{1,0}}p_1^{N,\,r^{\alpha}V_{r,\xi}+F_{r,\xi}}(x,y){\rm d}x$ by the symmetry. Hence we can deduce that
\begin{align*}
\left\|e^{-\left(r^{\alpha}V_{r,\xi}+d{\bf F}_{r,\xi}+(-\Delta)^{\alpha/2}_{N,1,0}\right)}\right\|_{2,2} 
\le C\lambda^{ad/\alpha}e^{-\lambda^{1-a}} + C\lambda^{-a/(d+\alpha +1)}
\end{align*}
which implies the assertion.
\end{proof}

We are now ready to prove the necessary condition for the discreteness of the spectrum of ${\mathcal L}_{V+F}$ in terms of the scattering length.

\begin{prop}\label{nece}
Suppose that $\sigma ({\mathcal L}_{V+F}) =\sigma_{\rm d} ({\mathcal L}_{V+F})$. 
Then for given $c>0$, there exists $r_0:=r_0(c) \in (0,1]$ and $R : (0,r_0] \to (0,\infty)$ such that
\begin{align}\label{Gammacond}
{\sf \Gamma}(r^{\alpha}V_{r,\xi}+F_{r,\xi}) \ge r^{\alpha}c \qquad \text{for}\hspace{0.2cm}|\xi| \ge R(r),~ r \in (0,r_0].
\end{align}
\end{prop}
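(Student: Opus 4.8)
The plan is to obtain \eqref{Gammacond} by combining Lemmas \ref{precpt} and \ref{asympN} with the upper bound of Proposition \ref{upper}, transported to the fixed unit cube $D_{1,0}$ via the scaling identity $\lambda_1^{N,r,\xi}(V+F)=r^{-\alpha}\lambda_1^{N,1,0}(r^{\alpha}V_{r,\xi}+F_{r,\xi})$. Write $C_2:=C_2(D_{1,0})$ for the constant in Proposition \ref{upper} (legitimate on the unit cube by Remark \ref{Notcube}) and let $\delta>0$ denote the associated smallness threshold; that is, $\lambda_1^{N,1,0}(W+G)\le C_2\,{\sf \Gamma}(W+G)$ whenever ${\sf \Gamma}(W+G)\le\delta$, and inspecting the proof of Proposition \ref{upper} one may take $\delta=1/(4C(D_{1,0}))$. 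The point I would emphasize from the outset is that $C_2$ and $\delta$ depend only on the unit cube, not on $r$ or $\xi$.

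First I would fix $c>0$ and choose $r_0=r_0(c)\in(0,1]$ so small that $r_0^{\alpha}c\le\delta$. Then, for each fixed $r\in(0,r_0]$, the hypothesis $\sigma({\mathcal L}_{V+F})=\sigma_{\rm d}({\mathcal L}_{V+F})$ together with Lemma \ref{precpt} gives $\lambda_1^{D,r,\xi}(V+F)\to\infty$ as $|\xi|\to\infty$, and Lemma \ref{asympN} then upgrades this to $\lambda_1^{N,r,\xi}(V+F)\to\infty$ as $|\xi|\to\infty$. Hence there is $R(r)>0$ with $\lambda_1^{N,r,\xi}(V+F)\ge C_2c$ for all $|\xi|\ge R(r)$, and this defines the required function $R:(0,r_0]\to(0,\infty)$.

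The final step is a short argument by contradiction. Fix $r\in(0,r_0]$ and $\xi$ with $|\xi|\ge R(r)$, and suppose ${\sf \Gamma}(r^{\alpha}V_{r,\xi}+F_{r,\xi})<r^{\alpha}c$. Then ${\sf \Gamma}(r^{\alpha}V_{r,\xi}+F_{r,\xi})<r_0^{\alpha}c\le\delta$, so Proposition \ref{upper} applies on $D_{1,0}$ to the potential $r^{\alpha}V_{r,\xi}$ and the non-local term $F_{r,\xi}$, giving $\lambda_1^{N,1,0}(r^{\alpha}V_{r,\xi}+F_{r,\xi})\le C_2\,{\sf \Gamma}(r^{\alpha}V_{r,\xi}+F_{r,\xi})<C_2r^{\alpha}c$. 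Dividing by $r^{\alpha}$ and invoking the scaling identity yields $\lambda_1^{N,r,\xi}(V+F)<C_2c$, contradicting the choice of $R(r)$. Therefore ${\sf \Gamma}(r^{\alpha}V_{r,\xi}+F_{r,\xi})\ge r^{\alpha}c$ for all $|\xi|\ge R(r)$ and $r\in(0,r_0]$, which is \eqref{Gammacond}.

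The whole argument is essentially bookkeeping once Lemmas \ref{precpt}, \ref{asympN} and Proposition \ref{upper} are available; the one delicate point I would watch is the uniformity of the smallness hypothesis in Proposition \ref{upper}. That bound holds only when the scattering length is below a fixed constant, and the scheme succeeds precisely because we always rescale to the unit cube before applying it, so the relevant constants $C_2(D_{1,0})$ and $C(D_{1,0})$ — and hence $\delta$ — remain fixed as $r$ and $\xi$ vary; the choice $r_0^{\alpha}c\le\delta$ then forces the ``bad'' alternative ${\sf \Gamma}(r^{\alpha}V_{r,\xi}+F_{r,\xi})<r^{\alpha}c$ into the regime where Proposition \ref{upper} can be invoked.
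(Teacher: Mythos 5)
Your argument is correct and is essentially the paper's own proof: the same contradiction scheme, the same use of the scaling identity to transfer to the unit cube, the same appeal to Proposition \ref{upper} (noting the uniform constant and smallness threshold on $D_{1,0}$), and the same use of Lemmas \ref{precpt} and \ref{asympN} to force $\lambda_1^{N,r,\xi}(V+F)\to\infty$ as $|\xi|\to\infty$. You spell out the quantifiers and the choice of $R(r)$ somewhat more carefully than the paper does, but the route is the same.
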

\vskip 0.3cm
\begin{proof}~~We argue by contradiction. For given $c >0$, choose $r_0:=r_0(c)$ so small that Proposition \ref{upper} can be applied, so that 
${\sf \Gamma}(r^{\alpha}V_{r,\xi}+F_{r,\xi}) \le r_0^{\alpha}c$ implies 
\begin{align*}
\lambda_1^{N,1,0}(r^{\alpha}V_{r,\xi}+F_{r,\xi}) \le C(D_{1,0}){\sf \Gamma}(r^{\alpha}V_{r,\xi}+F_{r,\xi}).
\end{align*}
As a consequence, if ${\sf \Gamma}(r^{\alpha}V_{r,\xi}+F_{r,\xi}) \le r^{\alpha}c$ for $r \in (0, r_0]$, then
\begin{align}\label{contratarget}
\lambda_1^{N,r,\xi}(V+F)&=r^{-\alpha}\lambda_1^{N,1,0}(r^{\alpha}V_{r,\xi}+F_{r,\xi}) \nonumber \\
&\le r^{-\alpha}C(D_{1,0}){\sf \Gamma}(r^{\alpha}V_{r,\xi}+F_{r,\xi}) \le cC(D_{1,0}).
\end{align}
However, we cannot have the bound \eqref{contratarget} for large enough $|\xi|$ in view of Lemma \ref{precpt} and Lemma \ref{asympN}. The proof is complete.
\end{proof}
Now, we can finish the proof of Theorem \ref{maindiscrete}.
\vskip 0.2cm
\emph{Proof of Theorem \ref{maindiscrete}} :~
It is clear that \eqref{Gammacond} implies \eqref{suffGammacond}. Hence we can conclude from Proposition \ref{suff} and Proposition \ref{nece} that the assertions in Theorem \ref{maindiscrete} are equivalent. \qed

\begin{rem}\label{FKcompactremark}
It is well known that the compactness of a bounded semigroup of linear operators is equivalent to the discreteness of the spectrum of the corresponding generator. Therefore we see that the assertions \eqref{d1} and \eqref{d2} in Theorem \ref{maindiscrete} are equivalent to the compactness of the following non-local Feynman-Kac semigroup $p_t^{V+F}$ on $L^2(\R^d)$ 
\begin{align*}
p_t^{V+F}f(x)=\EE_x\left[e^{-\int_0^tV(X_s){\rm d}s-\sum_{0<s\le t}F(X_{s-},X_s)}f(X_t)\right].
\end{align*}
\end{rem}

We further discuss a sufficient condition for the discreteness of the spectrum of ${\mathcal L}_{V+F}$. 

\begin{cor}\label{suff;thin} Let $V \in S_{D_0}({\bf X})$ (resp., or $F \in J_{D_0}({\bf X})$). Assume that for any $c>0$ there exists $r=r(c) \in (0,1]$ and 
\begin{align}\label{thinatinfinity}
\lim_{|\xi| \to \infty}|\{V \le c\} \cap D_{r,\xi}|=0 \quad \left(resp.,~or~~\lim_{|\xi| \to \infty}|\{{\bf F}1 \le c\} \cap D_{r,\xi}|=0\right).
\end{align}
Then $\sigma({\mathcal L}_{V+F})=\sigma_{\rm d}({\mathcal L}_{V+F})$.
\end{cor}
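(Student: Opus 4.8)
The plan is to verify the precompactness criterion recalled just before Lemma~\ref{precpt}: it is enough to show that $M=\{f\in C_0^\infty(\R^d):\E(f,f)+\mathcal H^{V+F}(f,f)\le 1\}$ is precompact in $L^2(\R^d)$. The first step is to turn the non-local perturbation into an honest potential. Pairing \eqref{TildeDelta} against $f$ (equivalently, a direct symmetrisation of $\langle d{\bf F}f,f\rangle$) gives, for every $f\in C_0^\infty(\R^d)$,
\begin{equation*}
\E(f,f)+\mathcal H^{V+F}(f,f)=\widetilde\E(f,f)+\int_{\R^d}\bigl(V+{\bf F}1\bigr)f^2\,{\rm d}x,\qquad \widetilde\E(f,f)=\frac{C_{d,\alpha}}2\int_{\R^d}\int_{\R^d}\frac{e^{-F(x,y)}(f(x)-f(y))^2}{|x-y|^{d+\alpha}}\,{\rm d}x\,{\rm d}y,
\end{equation*}
where $\widetilde\E$ is the Dirichlet form of $\widetilde{\bf X}$. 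Since $F$ is bounded, $\widetilde\E(f,f)\ge e^{-\|F\|_\infty}\E(f,f)$, so with $W:=V+{\bf F}1\ge 0$ we get $M\subset M_W:=\{f\in C_0^\infty(\R^d):\E(f,f)+\int_{\R^d}Wf^2\,{\rm d}x\le e^{\|F\|_\infty}\}$, and it suffices to prove $M_W$ precompact. Also, in either alternative of the hypothesis the effective potential $W$ is thin at infinity: because $\{W\le c\}\subset\{V\le c\}\cap\{{\bf F}1\le c\}$, condition \eqref{thinatinfinity} forces that for each $c>0$ there is $r=r(c)\in(0,1]$ with $|\{W\le c\}\cap D_{r,\xi}|\to 0$ as $|\xi|\to\infty$; this is what lets the two cases be handled simultaneously.

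The remaining and main step is the Maz'ya--Shubin/Molchanov-type argument that $M_W$ is precompact. Restriction to any fixed ball $B$ is precompact in $L^2(B)$ by the compact Sobolev embedding used in Lemma~\ref{precpt} (here the fractional Sobolev inequality, valid since $d>\alpha$, also bounds $\|f\|_{L^2(B)}$ uniformly on $M_W$). For the uniform decay of tails, fix $\varepsilon>0$; choose $c$ large, put $r=r(c)$, and partition $\R^d$ into the cubes $Q=D_{r,\xi}$, $\xi\in r\mathbb Z^d$. On each such cube, Hölder's inequality on $Q\cap\{W\le c\}$ followed by the scale-invariant fractional Sobolev inequality
\begin{equation*}
\|f\|_{L^{2d/(d-\alpha)}(Q)}^2\le C_S\Bigl(\int_Q\int_Q\frac{(f(x)-f(y))^2}{|x-y|^{d+\alpha}}\,{\rm d}x\,{\rm d}y+r^{-\alpha}\|f\|_{L^2(Q)}^2\Bigr),\qquad C_S=C_S(d,\alpha),
\end{equation*}
yields, once $|\xi|\ge R_0(c)$ so that $|\{W\le c\}\cap D_{r,\xi}|^{\alpha/d}C_Sr^{-\alpha}\le\tfrac12$,
\begin{equation*}
\int_Q f^2\,{\rm d}x\le\frac2c\int_Q Wf^2\,{\rm d}x+2\eta\int_Q\int_Q\frac{(f(x)-f(y))^2}{|x-y|^{d+\alpha}}\,{\rm d}x\,{\rm d}y,\qquad \eta=\eta(R_0)\xrightarrow[R_0\to\infty]{}0,
\end{equation*}
the thinness of $W$ being exactly what makes $\eta\to0$. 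Summing over the disjoint cubes with $|\xi|\ge R_0$, using $\sum_Q\int_QWf^2\le\int_{\R^d}Wf^2$ and $\sum_Q\iint_{Q\times Q}\frac{(f(x)-f(y))^2}{|x-y|^{d+\alpha}}\le\frac2{C_{d,\alpha}}\E(f,f)$, gives $\int_{B(0,R)^c}f^2\,{\rm d}x\le C\bigl(c^{-1}+\eta\bigr)$ for every $f\in M_W$, with $R=R_0+\tfrac12\sqrt d$; choosing first $c$ and then $R_0$ large makes this $\le\varepsilon$. Hence $M_W$, and a fortiori $M$, is precompact, so $\sigma(\mathcal L_{V+F})=\sigma_{\rm d}(\mathcal L_{V+F})$.

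The hard part is the local fractional Sobolev inequality on cubes with the correct scaling (standard for $d>\alpha$ on Lipschitz domains, but it must be set up with the $r^{-\alpha}$ weight so that the constant $C_S$ is $r$-independent), together with the discipline of fixing the parameters in the order $c\leadsto r(c)\leadsto R_0(c)\leadsto R$; everything else is bookkeeping. For the $V$-alternative alone one can instead verify condition \eqref{d1} of Theorem~\ref{maindiscrete} directly: by monotonicity of the scattering length (Lemma~\ref{anotherexpression}(2)) and \eqref{express}, $\mathsf\Gamma(r^\alpha V_{r,\xi}+F_{r,\xi})\ge\mathsf\Gamma(r^\alpha c_0{\bf 1}_{E_{r,\xi}})\ge r^\alpha c_0\int_{E_{r,\xi}}e^{-r^\alpha c_0 R{\bf 1}_{E_{r,\xi}}}\,{\rm d}x$ with $E_{r,\xi}=\{x\in D_{1,0}:V(rx+\xi)>c_0\}$, and since $R{\bf 1}_{E_{r,\xi}}\le R{\bf 1}_{D_{1,0}}$ is bounded on $D_{1,0}$, taking $c_0=4c$ and $r=r(c)$ small forces $\mathsf\Gamma(r^\alpha V_{r,\xi}+F_{r,\xi})\ge r^\alpha c$ as soon as $|E_{r,\xi}|\ge\tfrac12$, i.e.\ for $|\xi|$ large. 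I would not pursue this route for the ${\bf F}1$-alternative, however, because ${\bf F}_{r,\xi}1$ only sees $F$ restricted to $D_{r,\xi}\times D_{r,\xi}$, which can be much smaller than ${\bf F}1$ near $\partial D_{r,\xi}$; the form identity of the first paragraph is what circumvents this and treats both alternatives uniformly.
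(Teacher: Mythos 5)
Your proof is correct, but it follows a genuinely different route from the paper. The paper's proof verifies condition~\eqref{d1} of Theorem~\ref{maindiscrete} directly: starting from monotonicity (Lemma~\ref{anotherexpression}(2)) it drops $F_{r,\xi}$, uses the representation~\eqref{express} together with Jensen's inequality and the Green-bound $\ell:=\sup_y\EE_y[\int_0^\infty V(X_s)\,{\rm d}s]<\infty$ (this is where $V\in S_{D_0}({\bf X})$ enters) to bound $\mathsf\Gamma(r^\alpha V_{r,\xi})$ from below by a constant times $c\,|\{V>c\}\cap D_{r,\xi}|$, and then invokes~\eqref{thinatinfinity}; for the $F$-alternative it asserts that the same argument works via a scaling identity for ${\bf F}_{r,\xi}1$. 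You instead bypass the scattering length entirely and prove precompactness of $M$ directly: the form identity $\E(f,f)+\mathcal H^{V+F}(f,f)=\widetilde\E(f,f)+\int(V+{\bf F}1)f^2$ converts the jump perturbation into the honest potential $W=V+{\bf F}1$, after which a Maz'ya--Shubin/Lenz--Stollmann--Wingert tiling argument with a scale-invariant local fractional Sobolev inequality gives the uniform tail bound. What each route buys: the paper's is shorter once the scattering-length machinery is available and makes the link to Theorem~\ref{maindiscrete}\eqref{d1} transparent; yours is self-contained, treats the $V$- and $F$-alternatives genuinely uniformly, and avoids the Green-boundedness hypotheses $V\in S_{D_0}({\bf X})$, $F\in J_{D_0}({\bf X})$ altogether. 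Your caution about the $F$-case in the paper is well placed: since $F_{r,\xi}$ is supported on $D_{1,0}\times D_{1,0}$, one computes ${\bf F}_{r,\xi}1(r^{-1}(x-\xi))=r^\alpha C_{d,\alpha}\int_{D_{r,\xi}}(1-e^{-F(x,u)})|x-u|^{-d-\alpha}{\rm d}u$, which is only $\le r^\alpha{\bf F}1(x)$ (and with a different exponent than the paper states); the truncation to $D_{r,\xi}$ means thinness of $\{{\bf F}1\le c\}$ does not obviously transfer to $\{{\bf F}_{r,\xi}1\le c'\}$, so the paper's ``similar way'' remark requires more justification than given. Your form-identity reduction sidesteps this cleanly. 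Two cosmetic points: in the tiling step the shift should be $R=R_0+\tfrac{r}{2}\sqrt d$, not $R_0+\tfrac12\sqrt d$, and the absorption threshold should read $|\{W\le c\}\cap D_{r,\xi}|^{\alpha/d}C_S\,r^{-\alpha}\le\tfrac12$ with the $r^{-\alpha}$ factor kept, as you do; neither affects the conclusion.
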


\begin{proof}
First we prove the assertion for $V$. In view of the assumption \eqref{thinatinfinity}, it follows that for any $c>0$ there exists $r=r(c) \in (0,1]$ and $R=R(c) >0$ such that for $|\xi| \ge R$ 
\begin{align}\label{comthin}
|\{V > c\} \cap D_{r,\xi}| \ge \frac{1}{2}|D_{r,\xi}|=\frac{1}{2}|D_{r,0}|.
\end{align}
Hence, we have by \eqref{comthin} 
\begin{align*}
{\sf \Gamma}(r^{\alpha}V_{r,\xi}+F_{r,\xi}) &\ge {\sf \Gamma}(r^{\alpha}V_{r,\xi}) \\
&=r^{\alpha}\int_{D_{1,0}}\EE_x\left[e^{-r^{\alpha}\int_0^\infty V_{r,\xi}(X_s){\rm d}s}\right]V_{r,\xi}(x){\rm d}x \\
&=r^{\alpha -1}\int_{D_{r,\xi}}\EE_{r^{-1}(x-\xi)}\left[e^{-r^{\alpha}\int_0^\infty V_{r,\xi}(X_s){\rm d}s}\right]V(x){\rm d}x \\
&\ge r^{\alpha -1}\int_{D_{r,\xi}}e^{-r^{\alpha}\sup_{y \in D_{1,0}}\EE_y\left[\int_0^\infty V_{r,\xi}(X_s){\rm d}s\right]}V(x){\rm d}x \\
&\ge r^{\alpha -1}\int_{\{V > c\}\cap D_{r,\xi}}e^{-r^{\alpha}\sup_{y \in D_{r,\xi}}\EE_y\left[\int_0^\infty V(X_s){\rm d}s\right]}V(x){\rm d}x \\
&\ge r^{\alpha -1}e^{-r^{\alpha}\ell} \int_{\{V > c\}\cap D_{r,\xi}}V(x){\rm d}x \ge r^{\alpha -1}e^{-r^{\alpha}\ell}\frac{c}{2}|D_{r,0}|  
\end{align*}
which implies Theorem \ref{maindiscrete}\eqref{d1}. The proof of the assertion for $F$ can be deduced in a similar way, by using the fact that ${\bf F}_{r,\xi}1(r^{-1}(x-\xi))=r^{d+\alpha -1}{\bf F}1(x)$ on $D_{r,\xi}$. 
\end{proof}

The condition \eqref{thinatinfinity} means that the sublevel set $\{V\le c\}$ (resp. $\{{\bf F}1 \le c\}$) is to be thin at infinity. More results for compactness of Schr\"odinger operators based on the concept of thin at infinity can be found in \cite{LSW, TTT} (see also \cite{Simon09, WangWu} as special cases).

\vskip 0.5cm
\noindent
{\bf Acknowledgement.}
The authors would like to thank the referee for valuable comments and suggestions. The authors also thank to Professor Kazuhiro Kuwae for his helpful comments.  

\vspace{\baselineskip}
%The style of the following references should be used in all documents.

\vskip 1.0cm
\noindent
Daehong Kim \\
Department of Mathematics and Engineering, \\
Graduate School of Science and Technology,
Kumamoto University, \\
Kumamoto, 860-8555 Japan \\
{\tt daehong@gpo.kumamoto-u.ac.jp}
\vskip 0.6cm
\noindent
Masakuni Matsuura \\
National Institute of Technology, Kagoshima College, \\
Kirishima, 899-5193 Japan \\
{\tt matsuura@kagoshima-ct.ac.jp}


\begin{thebibliography}{10}
%\bibitem{aa} xxx.
\bibitem{BBC}
{K. Bogdan, K. Burdzy and Z.-Q. Chen},
{Censored stable processes}, Probab. Theory Related Fields {\bf 127}, 89--152 (2003).

\bibitem{ChenAnalyticF}
{Z.-Q. Chen}, 
{Analytic characterization of conditional gaugeability for non-local Feynman-Kac transforms}, J. Funct. Anal. {\bf 202}, no. 1, 226--246 (2003).

\bibitem{ChenUF}
{Z.-Q. Chen}, 
{Uniform integrability of exponential martingales and spectral bounds of non-local Feynman-Kac semigroups}, Stochastic Analysis and Applications to Finance, Essays in Honor of Jia-an Yan. Eds by T. Zhang and X. Zhou, 2012, pp. 55--75.

\bibitem{CKK}
{Z.-Q. Chen, D. Kim and K. Kuwae},
{$L^p$-independence of spectral radius for generalized Feynman-Kac semigroups}, to appear in Math. Ann. (2018). 

\bibitem{CFbook}
{Z.-Q. Chen and M. Fukushima}, 
{Symmetric Markov Processes, Time Change, and Boundary Theory}, London Mathematical Society Monographs Series, {\bf 35}. Princeton University Press, Princeton, NJ, 2012.

\bibitem{ChenKumagai}
{Z.-Q. Chen and T. Kumagai},
{Heat kernel estimates for stable-like processes on $d$-sets}, Stochastic Process. Appl. {\bf 108}, 27--62 (2003).

\bibitem{FitzPY}
{P. J. Fitzsimmons, P. He and J. Ying},
{A remark on Kac's scattering length formula}, Science China Math. {\bf 56}, no. 2, 331--338 (2013).

\bibitem{FOT}
{M.~Fukushima, Y.~Oshima and M.~Takeda},
{Dirichlet forms and symmetric Markov processes}, Second revised and extended edition. de Gruyter Studies in Mathematics, {\bf 19}. Walter de Gruyter \& Co., Berlin, 2011.

\bibitem{He}
{P.~He},
{A formula on scattering length of dual Markov processes}, Proc. Amer. Math. Soc. {\bf 139}, no. 5, 1871--1877 (2011).

\bibitem{Kac}
{M. Kac},
{Probabilistic methods in some problems of scattering theory}, Rocky Mountain J. Math. {\bf 4}, 511--537 (1974).

\bibitem{KacLut}
{M. Kac and J. Luttinger},
{Scattering length and capacity}, Ann. Inst. Fourier {\bf 25}, no. 3-4, 317--321 (1975).

\bibitem{KimKuwae:TAMS}
{D. Kim and K. Kuwae},  
{Analytic characterizations of gaugeability for generalized Feynman-Kac functionals}, Trans. Amer. Math. Soc. {\bf 369}, no.7, 4545-4596 (2017).

\bibitem{KS}
{V. Kondrat'ev and M. Shubin},
{Discreteness of spectrum for the Schr${\rm \ddot{o}}$dinger operators on manifolds of bounded geometry}, Oper. Theory Adv. Appl. {\bf 110}, Birkhauser, Basel, 1999, pp. 185--226.

\bibitem{LSW}
{D.~Lenz, P.~Stollmann and D.~Wingert},
{Compactness of Schr${\rm \ddot{o}}$dinger semigroups}, Math. Nachr. {\bf 283}, 94--103 (2010).

\bibitem{MazyaShubin}
{V. Maz'ya and M. Shubin},
{Discreteness of spectrum and positivity criteria for Schr${\rm \ddot{o}}$dinger operators}, Ann. Math. {\bf 166}, 919--942 (2005).

\bibitem{Simon09}
{B.~Simon},
{Schr${\rm \ddot{o}}$dinger operators with purely discrete spectra}, Methods Funct. Anal. Topology {\bf 15}, 61--66 (2009).

\bibitem{Siudeja:Illinois}
{B. Siudeja},
{Scattering length for symmetric stable processes}, Illinois J. Math. {\bf 52}, no. 2, 667--680 (2008).

\bibitem{Song}
{R. Song},
{Estimates on the transition densities of Girsanov transforms of symmetric stable processes}, J. Theoret. Probab. {\bf 19}, no. 2, 487--507 (2006).

\bibitem{Stroock}
{D. Stroock},
{The Kac approach to potential theory, I}, J. Math. Mech. {\bf 16}, 829--852 (1967).

\bibitem{Takahashi}
{Y.~Takahashi}
{An integral representation on the path space for scattering length}, Osaka J. Math. {\bf 7}, 373-379 (1990).

\bibitem{Takeda:2006}
{M. Takeda},
{Gaugeability for Feynman-Kac functionals with applications to symmetric $\alpha$-stable processes}, Proc. Amer. Math. Soc. {\bf 134}, no. 9, 2729--2738 (2006).

\bibitem{Takeda:2010}
{M. Takeda},
{A formula on scattering length of positive smooth measures}, Proc. Amer. Math. Soc. {\bf 138}, no. 4, 1491--1494 (2010).

\bibitem{TTT}
{M.~Takeda, Y.~Tawara and K.~Tsuchida},
{Compactness of Markov and Schr${\rm \ddot{o}}$dinger semi-groups : A probabilistic approach}, Osaka J. Math. {\bf 54}, 517--532 (2017).

%\bibitem{Tamura}
%{H.~Tamura},
%{Semi-classical limit of scattering length}, Lett. Math. Phys. {\bf 24}, 205--209 (1992).

\bibitem{Taylor:1976}
{M. Taylor},
{Scattering length and perturbations of $-\Delta$ by positive potentials}, J. Math. Anal. Appl. {\bf 53}, 291--312 (1976).

\bibitem{Taylor:2006}
{M. Taylor},
{Scattering length and the spectrum of $-\Delta +V$}, Canad. Math. Bull. {\bf 49}, no. 1, 144--151 (2006).

\bibitem{WangWu}
{F. Y. Wang and J. L. Wu},
{Compactness of Schr${\rm \ddot{o}}$dinger semigroups with unbounded below potentials}, Bull. Sci. Math. {\bf 132}, no. 8, 679--689 (2008).



\end{thebibliography}
\end{document}